\newtheorem{thm}{Theorem}
\newtheorem{lemma}{Lemma}
\newtheorem{remark}{Remark}
\newtheorem{prop}{Proposition}
\def\R{\mathbb{R}}
\def\Z{\mathbb{Z}}
\def\eqd{\,{\buildrel d \over =}\,}
\def\P{{\mathbb P}}     
\def\E{{\mathbb E}}     
\newcommand{\toL}{\,{\buildrel {d} \over \longrightarrow}\,}
\newcommand{\coleq}{\mathrel{\mathop:}=}
\newcommand{\eqcol}{=\mathrel{\mathop:}}
\newcommand*\mathinhead[2]{\texorpdfstring{$#1$}{#2}}
\journal{arXiv}
\begin{document}

\begin{frontmatter}

%% Title, authors and addresses

%% use the tnoteref command within \title for footnotes;
%% use the tnotetext command for theassociated footnote;
%% use the fnref command within \author or \affiliation for footnotes;
%% use the fntext command for theassociated footnote;
%% use the corref command within \author for corresponding author footnotes;
%% use the cortext command for theassociated footnote;
%% use the ead command for the email address,
%% and the form \ead[url] for the home page:
%% \title{Title\tnoteref{label1}}
%% \tnotetext[label1]{}
%% \author{Name\corref{cor1}\fnref{label2}}
%% \ead{email address}
%% \ead[url]{home page}
%% \fntext[label2]{}
%% \cortext[cor1]{}
%% \affiliation{organization={},
%%            addressline={}, 
%%            city={},
%%            postcode={}, 
%%            state={},
%%            country={}}
%% \fntext[label3]{}

\title{Latent mutations in the ancestries of alleles under selection}

%% use optional labels to link authors explicitly to addresses:
%% \author[label1,label2]{}
%% \affiliation[label1]{organization={},
%%             addressline={},
%%             city={},
%%             postcode={},
%%             state={},
%%             country={}}
%%
%% \affiliation[label2]{organization={},
%%             addressline={},
%%             city={},
%%             postcode={},
%%             state={},
%%             country={}}

\author[1,2]{Wai-Tong (Louis) Fan}
\ead{waifan@iu.edu}
\author[2]{John Wakeley}
\ead{wakeley@fas.harvard.edu}

\affiliation[1]{
organization={Department of Mathematics, Indiana University},
addressline={831 East 3rd St}, 
city={Bloomington},
postcode={47405}, 
state={IN},
country={USA}
}

\affiliation[2]{
organization={Department of Organismic and Evolutionary Biology, Harvard University},
addressline={16 Divinity Ave}, 
city={Cambridge},
postcode={02138}, 
state={MA},
country={USA}
}

\begin{abstract}
We consider a single genetic locus with two alleles $A_1$ and $A_2$ in a large haploid population. The locus is subject to selection and two-way, or recurrent, mutation. Assuming the allele frequencies follow a Wright-Fisher diffusion and have reached stationarity, we describe the asymptotic behaviors of the conditional gene genealogy and the latent mutations of a sample with known allele counts, when the count $n_1$ of allele $A_1$ is fixed, and when either or both the sample size $n$ and the selection strength $\lvert\alpha\rvert$ tend to infinity.  Our study extends previous work under neutrality to the case of non-neutral rare alleles, asserting that when selection is not too strong relative to the sample size, even if it is strongly positive or strongly negative in the usual sense ($\alpha\to -\infty$ or $\alpha\to +\infty$), the number of latent mutations of the $n_1$ copies of allele $A_1$ follows the same distribution as the number of alleles in the Ewens sampling formula.  On the other hand, very strong positive selection relative to the sample size leads to neutral gene genealogies with a single ancient latent mutation.  We also demonstrate robustness of our asymptotic results against changing population sizes, when one of $\lvert\alpha\rvert$ or $n$ is large.  
\end{abstract}

%%Graphical abstract
%%%%\begin{graphicalabstract}
%\includegraphics{grabs}
%%%%\end{graphicalabstract}
%%Research highlights
%%%%\begin{highlights}
%%%%\item Research highlight 1
%%%%\item Research highlight 2
%%%%\end{highlights}

\begin{keyword}
Recurrent mutation \sep selection \sep Ewens sampling formula \sep coalescent \sep Wright-Fisher diffusion 
%% keywords here, in the form: keyword \sep keyword

%% PACS codes here, in the form: \PACS code \sep code

%% MSC codes here, in the form: \MSC code \sep code
%% or \MSC[2008] code \sep code (2000 is the default)

\end{keyword}

\end{frontmatter}

%\linenumbers

\section{Introduction} \label{sec:intro}

The observed copies of a particular allele in a sample descend from an unknown number of distinct mutations.  If $k_1$ is the number of these `latent' mutations for allele $A_1$ when it is observed $n_1$ times in a sample, then $k_1\in\{1,2,\ldots,n_1\}$.  Although latent mutations are not observed directly, they can be modeled as outcomes of the stochastic ancestral process of a sample and inferred from patterns of variation in DNA data \citep{HarpakEtAl2016,SeplyarskiyEtAl2021,JohnsonEtAl2022}.  Analytical results on the distribution and timing of latent mutations of rare neutral alleles are given in \citet{WakeleyEtAl2023}.  Here we consider non-neutral alleles which may be under strong selection and which may or may not be rare.  We take two different approaches to modeling latent mutations under selection and recurrent mutation.  The first approach uses the idea of coalescence in a random background of allele frequencies in the population \citep{BartonEtAl2004}.  The second uses the conditional ancestral selection graph \citep{Slade2000a} and demonstrates results consistent with those from the first approach.

\citet{WakeleyEtAl2023} also contains an application to the frequencies of single-nucleotide sites with counts $n_1\in\{1,2,\ldots,40\}$ of synonymous mutations in a subsample of $57$K non-Finnish European individuals ($n=114$K) from the \textit{gnomAD} database \citep{KarczewskiEtAl2020}.  Dramatic differences in sample frequency distributions of rare alleles with different mutation rates, categorized by the `Roulette' method of \citet{SeplyarskiyEtAl2023}, were well explained by an empirical demographic model with recurrent mutation but no selection.  \citet[Fig.~3a]{SeplyarskiyEtAl2023} showed using simulations that a neutral, parametric demographic model fitted to these data also explained the frequencies of mutation in counts $n_1 \leq 10^4$.  Polymorphic sites with small mutation counts comprise the bulk of variation in humans.  They represent a rich source of information about demographic history and possibly selection.  Sites with $n_1\in\{1,2,\ldots,40\}$ make up about $95$\% of all polymorphic sites in the \textit{gnomAD} data used in \citet{WakeleyEtAl2023}. 

At present humans are the only species with sufficient genomic data to apply such models of rare variants which rely on limiting approximations for large sample sizes.  Whereas the neutral models in \citet{WakeleyEtAl2023} and \citet{SeplyarskiyEtAl2023} also account for the extreme population growth of humans \citep{KeinanAndClark2012,GazaveEtAl2014,GaoAndKeinan2016}, in considering selection here we focus on populations of constant size.  Previous theoretical work on populations of constant size has shown that distributions of rare alleles are in fact unaffected even by moderately strong selection \citep{JoyceAndTavare1995,Joyce1995}.  Specifically, the counts of latent mutations obey the independent Poisson statistics of rare alleles in the Ewens sampling formula \citep{Ewens1972,ArratiaBarbourAndTavare1992,ArratiaBarbourAndTavare2003}.  This is also the case in \citet{WakeleyEtAl2023} when the population size is constant.  In the present work we investigate the robustness of these results to very strong selection.  Theory also predicts that rare alleles tend to be young \citep{KimuraAndOhta1973,Watterson1976}.  \citet{MathiesonAndMcVean2014} and \citet{PlattEtAl2019} have demonstrated empirically that rare non-synonymous or otherwise functional alleles in the human genome are even younger than non-functional rare alleles.  In the present work we also investigate how strong selection and rarity affect the ages of latent mutations. 

We assume there are two possible alleles, $A_1$ and $A_2$, at a single genetic locus in a large haploid population. We begin by assuming that the population size $N$ is constant over time.  In Section~\ref{S:varying} we consider time-varying population size.  One allele or the other is favored by directional selection.  Mutation is recurrent and happens in both directions.  In the diffusion approximation, time is measured in proportion to $N_e$ generations where $N_e$ is the effective population size \citep{Ewens2004}.  Under the Wright-Fisher model of reproduction, $N_e=N$.  Under the Moran model of reproduction \citep{Moran1958,Moran1962}, $N_e=N/2$.  With these assumptions, the frequency of $A_1$ alleles is well approximated by a process $X$ that solves \eqref{eq:sde} below and has parameters $\theta_1$, $\theta_2$ and $\alpha$ as $N\to\infty$.  For a haploid population, $\theta_i=2N_e u_i$ and $\alpha=2N_e s$, in which $u_i$ is the per-generation rate of $A_{3-i} \to A_i$ mutations and $s$ is the selection coefficient.  If there is no dominance, these results can be extended to diploids, in which case $\theta_i=4N_e u_i$ and $\alpha=4N_e s$.

Thus, we assume that allele-frequency dynamics in the population obey the Wright-Fisher diffusion \citep{Fisher1930b,Wright1931,Ewens2004} with parameters $\theta_1$ and $\theta_2$ for mutations $A_2 \to A_1$ and $A_1 \to A_2$, respectively, and $\alpha$ for the selective advantage (if $\alpha>0$) or disadvantage (if $\alpha<0$) of allele $A_1$.   
That is, we let $X(t)$ be the relative frequency of $A_1$ in the population at time $t$, and assume that   its forward-time dynamics is described by the stochastic differential equation  
\begin{linenomath*}
\begin{equation}
dX(t) = \left[\frac{\theta_1}{2} (1-X(t)) - \frac{\theta_2}{2} X(t) + \frac{\alpha}{2} X(t) (1-X(t))\right]dt + \sqrt{X(t) (1-X(t))}\, dW_t, \quad t>0 \label{eq:sde}
\end{equation}
\end{linenomath*} 
in which $W_t$ is the Wiener process, also called the standard Brownian motion.  

Both of the approaches (random background and ancestral selection graph) we take to modeling latent mutations  rely on the assumption that the population has reached equilibrium, which occurs in the limit $t\to\infty$.  The stationary probability density of $X$ is    
\begin{linenomath*}
\begin{equation} 
\phi_\alpha(x) = C x^{\theta_1 - 1} (1-x)^{\theta_2 - 1} e^{\alpha x}, \quad 0<x<1 \label{eq:phix}
\end{equation}
\end{linenomath*}
\citep{Wright1931,Ewens2004}.  We explicitly denote the dependence on $\alpha$ because this parameter plays a key role in what follows.  The normalizing constant $C$ guarantees that $\int_0^1 \phi_\alpha(x)dx=1$.  It is given by  
\begin{linenomath*}
\begin{equation} 
C = \frac{\Gamma(\theta_1+\theta_2)}{\Gamma(\theta_1) \Gamma(\theta_2) {}_1F_1(\theta_1;\theta_1+\theta_2;\alpha)} \label{eq:C}
\end{equation}
\end{linenomath*}
in which $\Gamma(a)$ is the gamma function and $_{1}F_{1}(a;b;z)$ is the confluent hypergeometric function, or Kummer's function; see \citet{AbramowitzAndStegun1964} and \citet{Slater1960}.

By definition, latent mutations occur in the ancestry of a sample.  When a sample of total size $n$ is taken from a population with stationary density \eqref{eq:phix}, it will contain a random number ${\mathcal N}_1$ of copies of allele $A_1$ and ${\mathcal N}_2 = n-{\mathcal N}_1$ copies of allele $A_2$.  The probability that ${\mathcal N}_1$ is equal to $n_1$ is equal to 
\begin{linenomath*}
\begin{align} 
q(n_1,n_2) &\coleq \P({\mathcal N}_1=n_1;n,\alpha,\theta_1,\theta_2) \notag \\[4pt]
 &= \int_0^1 \binom{n}{n_1} x^{n_1} (1-x)^{n-n_1} \phi_\alpha(x) dx  \notag \\[4pt]
&= C \binom{n_1+n_2}{n_1} \frac{\Gamma(\theta_1+n_1)\Gamma(\theta_2+n_2)}{\Gamma(\theta_1+\theta_2+n_1+n_2)} {}_1F_1(\theta_1 + n_1;\theta_1+\theta_2+n_1+n_2;\alpha) \label{eq:qn1}
\end{align}
\end{linenomath*}
for $n_1 \in \{0,1,\ldots,n\}$ and $n_2=n-n_1$, and with $C$ again given by \eqref{eq:C}.  The notation $q(\cdot)$ is from \citet{Slade2000a,Slade2000b} and is convenient for the ancestral selection graph.

Suppose now we are given the sample count, that is, we know that among the $n$ uniformly sampled haploid individuals,  $n_1$ of them are of type 1 and the remaining $n_2=n-n_1$ are of type 2. Then the posterior density of the population frequency of $A_1$ conditional on the sample is
\begin{linenomath*}
\begin{align} 
\phi_\alpha^{(n_1,n_2)}(x) &= \frac{\binom{n_1+n_2}{n_1} x^{n_1} (1-x)^{n_2} \phi_\alpha(x)}{q(n_1,n_2)} \label{eq:condphi1} \\[4pt]
&= \frac{\Gamma(\theta_1+\theta_2+n_1+n_2) x^{\theta_1+n_1-1} (1-x)^{\theta_2+n_2-1} e^{\alpha x}}{\Gamma(\theta_1+n_1)\Gamma(\theta_2+n_2){}_1F_1(\theta_1 + n_1;\theta_1+\theta_2+n_1+n_2;\alpha)} \label{eq:condphi2}
\end{align}
\end{linenomath*}
from Bayes' theorem with prior density $\phi_\alpha$. 

The sampling probability $q(n_1,n_2)$ in \eqref{eq:qn1} and the resulting posterior density $\phi_\alpha^{(n_1,n_2)}$
play major roles in the two approaches we take to modeling latent mutations.  Specifically, transition probabilities in the conditional ancestral selection graph depend on ratios of sampling probabilities \citep{Slade2000b} and the allele frequency in the ancestral process of \citet{BartonEtAl2004} has initial density $\phi_\alpha^{(n_1,n_2)}(x)$ when conditioned on the sample.

We describe the occurrence of latent mutations in the ancestry of allele $A_1$ conditional on the sample count $n_1$.  We say that $A_1$ is \textit{rare} when the sample size $n$ is much larger than $n_1$.  We enforce this rarity of $A_1$ by letting $n_2 \sim n$ tend to infinity with $n_1$ fixed, or finite.  We present some results for cases in which $A_1$ is not rare in this sense, that is when neither $n_1$ nor $n_2$ is large. In this case we also describe the conditional ancestry of $A_2$, but overall our focus is on large samples and rare $A_1$.  This is the same, sample-based concept of rarity that was used in \citet{WakeleyEtAl2023} and previously considered by \citet{JoyceAndTavare1995} and \citet{Joyce1995}.  It may be distinguished from rarity in the population, though of course finding $A_1$ rare in a large sample is most likely when the population frequency $x$ is small. 

By \textit{strong selection} we mean large $\lvert\alpha\rvert$.  We model rarity and strong selection together under the assumption that $\alpha = \widetilde{\alpha} n_2$ for some constant $\widetilde{\alpha}\in\mathbb{R}$.  We study latent mutations and the ancestral processes which generate them under three scenarios: (i) $\lvert\alpha\rvert$ large with $n_2$ fixed, (ii) $n_2$ large with $\alpha$ fixed, and (iii) both $\lvert\alpha\rvert$ and $n_2$ large with $\widetilde{\alpha} = \alpha/n_2$ fixed.  In making approximations for large $n_2$ and/or large $\lvert\alpha\rvert$, we make extensive use of asymptotic results for ratios of gamma functions and for the confluent hypergeometric function which are presented in  \ref{sec:asymptotics}.

The parameters $\theta_1$ and $\theta_2$ are fixed constants throughout, with $\theta_1,\theta_2>0$.  For single nucleotide sites, these population-scaled mutation rates have been estimated for many species, using average pairwise sequence differences and assuming constant population size, and are typically about $0.01$ with a range of about $0.0001$ to $0.1$ \citep{LefflerEtAl2012}.  Values for humans are smaller but they vary almost as widely among sites in the genome, with a mean of about $0.0008$ and a range of about $0.0001$ to $0.02$ \citep{SeplyarskiyEtAl2021,SeplyarskiyEtAl2023,WakeleyEtAl2023}.  In contrast, there is no reason to suppose that the selection parameter $\lvert\alpha\rvert$ is small \citep{EyreWalkerAndKeightley2007,ChenEtAl2020,AgarwalEtAl2023}.  Note that our introduction of a constant $\widetilde{\alpha} = \alpha/n_2$ is simply a device to specify the relative importance of rarity as opposed strong selection, not a hypothesis about biology. 
  
The case of a rare neutral allele was considered in \citet{WakeleyEtAl2023} where it was shown that the number of latent mutations in the ancestry of the $n_1$ copies of allele $A_1$ follows the same distribution as the number of alleles in the Ewens sampling formula \citep{Ewens1972} with sample size $n_1$ and mutation parameter $\theta_1$.  Let $K_1 $ be the random number of these latent mutations for allele $A_1$ in the ancestry of the sample.  Further, let $\xi_j$ be a Bernoulli random variable with probability of success 
\begin{linenomath*}
\begin{equation}
\P(\xi_j=1) = \frac{\theta_1}{\theta_1+j-1} \quad , \quad j = 1,2,\ldots \label{eq:pcoalj}
\end{equation}
\end{linenomath*}
Under neutrality for large sample size and conditional on ${\mathcal N}_1=n_1$,
\begin{linenomath*}
\begin{equation}
K_1  \;\eqd\; \xi_{n_1} + \xi_{n_1-1} + \cdots + \xi_{2} + \xi_{1} \label{eq:K1sum}
\end{equation}
\end{linenomath*}
which gives the stated Ewens sampling result \citep{ArratiaBarbourAndTavare1992}. In \eqref{eq:K1sum} and below, $\eqd$ denotes equal in distribution. Note that, because coalescence is among exchangeable lineages, the full Ewens sampling formula should apply if we were to keep track of the sizes of latent mutations; see \citet{Crane2016} and \citet{Tavare2021} for recent reviews. 

Here we apply the model of coalescence in a random background described by \citet{BartonEtAl2004} to prove these results \eqref{eq:pcoalj} and \eqref{eq:K1sum} for rare alleles in large samples and especially to extend the analysis of latent mutations to scenarios which include selection.  We investigate both the number of latent mutations and their timing in the ancestry of the sample, and we allow that selection may be strong.  We also show how the same scenarios can be treated using the conditional ancestral selection graph \citep{Slade2000a}, giving the same limiting results for all three scenarios.  

Briefly, we find that positive selection does not in general lead to \eqref{eq:pcoalj} and \eqref{eq:K1sum}, that very strong positive selection (relative to the sample size) leads to neutral gene genealogies with a single ancient latent mutation for the favored allele.  This is described in Section~\ref{sec:bes} for scenario (i) and for the case $\widetilde{\alpha}\in(1,\infty)$ in scenario (iii).  On other hand, when selection is not too strong relative to the sample size, then extreme rarity of $A_1$ in the sample can effectively override strong positive selection and retrieve \eqref{eq:pcoalj} and \eqref{eq:K1sum}.  This is described in Section~\ref{sec:bes} for scenario (ii) and for the case $\widetilde{\alpha}\in(-\infty,1)$ in scenario (iii).  Figures~\ref{fig:scenario1}, \ref{fig:scenario2} and \ref{fig:scenario3b} illustrate our results in the three scenarios.

We note that \citet{FaveroAndJenkins2024arxiv} have recently performed detailed analysis of a $d$-allele diffusion model, where the selective advantage of one allele grows to infinity and the other parameters remain fixed. Their findings confirm and extend what we establish  for scenario (i) in the two-allele model  in Sections \ref{sec:bessub1} and \ref{sec:condasgsub1}.  In addition, \citet{FaveroAndJenkins2024arxiv} prove the duality of the strong-selection limit of the diffusion and the corresponding ancestral selection graph. 

\section{Sample frequencies and posterior population frequencies} \label{sec:freqs}  

In this section, we present asymptotic results for the sampling probability $q(n_1,n_2)$ in \eqref{eq:qn1} and the posterior density $\phi_\alpha^{(n_1,n_2)}(x)$ in \eqref{eq:condphi1} in our three regimes of interest: (i) $\lvert\alpha\rvert$ large with $n_2$ fixed, (ii) $n_2$ large with $\alpha$ fixed, and (iii) both $\lvert\alpha\rvert$ and $n_2$ large with $\widetilde{\alpha} = \alpha/n_2$ fixed.

\subsection{Asymptotics for sampling probabilities} \label{sec:samplefreqs}

In the case of strong selection and moderate sample size, that is $\lvert\alpha\rvert$ large with $n_2$ fixed, applying \eqref{eq:1F1s1} and \eqref{eq:1F1s2} to \eqref{eq:qn1} gives    
\begin{linenomath*}
\begin{subnumcases}{q(n_1,n_2) = }
\binom{n}{n_1} \frac{\Gamma(\theta_1+n_1)}{\Gamma(\theta_1)} {\lvert\alpha\rvert}^{-n_1} \left( 1 + O{\left({\lvert\alpha\rvert}^{-1}\right)} \right) & if $\,\alpha<0,$ \label{eq:qn1n2largealphaneg} \\[5pt]
\binom{n}{n_1} \frac{\Gamma(\theta_2+n-n_1)}{\Gamma(\theta_2)} {\alpha}^{n_1-n} \left( 1 + O{\left(\alpha^{-1}\right)} \right) & if $\,\alpha>0.$ \label{eq:qn1n2largealphapos}
\end{subnumcases} 
\end{linenomath*}
Here we focus on the leading-order terms but note that the next-order terms are straightforward to obtain using \eqref{eq:1F1s1} and \eqref{eq:1F1s2} and additional higher-order terms could be computed using (4.1.2) and (4.1.6) in \citet{Slater1960}.  In \eqref{eq:qn1n2largealphaneg}, each additional copy of $A_1$ decreases the sampling probability by a factor of $1/\lvert\alpha\rvert$ so the most likely sample is one which contains no copies of $A_1$.  In \eqref{eq:qn1n2largealphapos}, each additional copy of $A_1$ increases the sampling probability by a factor of $\alpha$ so the most likely sample is monomorphic for $A_1$.  However, these results are perfectly symmetric for the two alleles.  Switching allelic labels and swapping $\lvert\alpha\rvert$ for $\alpha$ changes \eqref{eq:qn1n2largealphaneg} into \eqref{eq:qn1n2largealphapos}.  That is, allele $A_2$ experiences the same effects of positive/negative selection in \eqref{eq:qn1n2largealphaneg}/\eqref{eq:qn1n2largealphapos} as the focal allele $A_1$ does in \eqref{eq:qn1n2largealphapos}/\eqref{eq:qn1n2largealphaneg}.  

In the case of large sample size and moderate selection, that is $n_2$ large with $\alpha$ fixed, applying \eqref{eq:1F1approxw} to \eqref{eq:qn1} gives  
\begin{linenomath*}
\begin{equation} 
q(n_1,n_2) = C \frac{\Gamma(\theta_1 + n_1)}{n_1 !} n_2^{-\theta_1} \left( 1 + O{\left(n_2^{-1}\right)} \right) . \label{eq:qn1n2largen2}
\end{equation} 
\end{linenomath*}
This has the same form as the neutral result, equation (22) in \citet{WakeleyEtAl2023}, only with the additional factor ${}_1F_1(\theta_1;\theta_1+\theta_2;\alpha)$ in the denominator of the constant $C$.  With respect to the count of the focal allele $A_1$, the distribution is similar to a (degenerate) negative-binomial distribution with parameters $p=1/n_2$ and $r=\theta_1$, like the corresponding result in Theorem 2 of \citet{Watterson1974a} for neutral alleles which propagate by a linear birth-death process.  The effect of selection is only to uniformly raise or lower the chances of seeing $n_1$ copies of $A_1$ in a very large sample.  The additional factor ${}_1F_1(\theta_1;\theta_1+\theta_2;\alpha)$ in the denominator of $C$ is a decreasing function of $\alpha$, which is equal to $1$ when $\alpha=0$ and approaches $0$ quickly from there as $\alpha$ increases.  Greater selection against (respectively, for) $A_1$ increases (respectively, decreases) the chance of it being rare but does not affect the shape of the distribution of $n_1$, at least to leading order in $1/n_2$.  

In the case of large sample size and strong selection, that is both $\lvert\alpha\rvert$ and $n_2$ large with $\widetilde{\alpha} = \alpha/n_2$ fixed, applying \eqref{eq:1F1s1}, \eqref{eq:1F1s2}, \eqref{eq:1F1ws1}, \eqref{eq:1F1ws3} to \eqref{eq:qn1} gives 
\begin{linenomath*}
\begin{subnumcases}{q(n_1,n_2) = }
\frac{\Gamma(\theta_1+n_1)}{n_1!\Gamma(\theta_1)} \left(\frac{1}{1+\lvert\widetilde{\alpha}\rvert}\right)^{n_1} \left(\frac{\lvert\widetilde{\alpha}\rvert}{1+\lvert\widetilde{\alpha}\rvert}\right)^{\theta_1} \left( 1 + O{\left(n_2^{-1}\right)} \right) & if $\,\widetilde{\alpha}<0$ \label{eq:qn1n2alphatilde1} \\[5pt]
B_1 \, \frac{\Gamma(\theta_1+n_1)}{n_1!} \left(\frac{1}{1-\widetilde{\alpha}}\right)^{n_1} \left( 1 + O{\left(n_2^{-1}\right)} \right) & if $\,0<\widetilde{\alpha}<1$ \label{eq:qn1n2alphatilde2} \\[5pt]
B_2 \, \frac{1}{n_1!} \left(\frac{\widetilde{\alpha}-1}{\widetilde{\alpha}}n_2\right)^{n_1} \left( 1 + O{\left(n_2^{-1}\right)} \right) & if $\,\widetilde{\alpha}>1$ \label{eq:qn1n2alphatilde3}
\end{subnumcases} 
\end{linenomath*}
with constants $B_1$ and $B_2$ which are unremarkable except in their dependence on $n_2$:
\begin{linenomath*}
\begin{align}
B_1 &\propto n_2^{\theta_2-\theta_1} e^{-\widetilde{\alpha}n_2} \notag \\[5pt]
B_2 &\propto n_2^{\theta_2-\frac{1}{2}} {\left(\widetilde{\alpha}e\right)}^{-n_2} \notag
\end{align}
\end{linenomath*}
such that $q(n_1,n_2)$ becomes tiny as $n_2$ grows.  In \eqref{eq:qn1n2alphatilde2} and \eqref{eq:qn1n2alphatilde3}, allele $A_1$ is favored by selection so it will be unlikely for its sample count to be very small. 

To see how \eqref{eq:qn1n2alphatilde2} and \eqref{eq:qn1n2alphatilde3} compare to \eqref{eq:qn1n2alphatilde1}, consider how these three sampling probabilities change as $n_1$ increases: 
\begin{linenomath*}
\begin{subnumcases}{\frac{q(n_1+1,n_2)}{q(n_1,n_2)} \approx }
\frac{\theta_1+n_1}{(1+\lvert\widetilde{\alpha}\rvert)(n_1+1)} & if $\,\widetilde{\alpha}<0$ \label{eq:qn1n2alphatilde1ratio} \\[5pt]
\frac{\theta_1+n_1}{(1-\widetilde{\alpha})(n_1+1)} & if $\,0<\widetilde{\alpha}<1$ \label{eq:qn1n2alphatilde2ratio} \\[5pt]
\frac{(\widetilde{\alpha}-1)n_2}{\widetilde{\alpha}(n_1+1)} & if $\,\widetilde{\alpha}>1$ \label{eq:qn1n2alphatilde3ratio} 
\end{subnumcases} 
\end{linenomath*}
where the approximation is for large $n_2$, i.e.\ omitting the $O{\left(n_2^{-1}\right)}$ parts of \eqref{eq:qn1n2alphatilde1}, \eqref{eq:qn1n2alphatilde2} and \eqref{eq:qn1n2alphatilde3}.  The first two differ from the corresponding neutral result $(\theta_1+n_1)/(n_1+1)$ by the constant factors $1/(1+\lvert\widetilde{\alpha}\rvert)<1$ in \eqref{eq:qn1n2alphatilde1ratio} and $1/(1-\widetilde{\alpha})>1$ in \eqref{eq:qn1n2alphatilde2ratio}.  Note that \eqref{eq:qn1n2largen2} gives the neutral result, as do \eqref{eq:qn1n2alphatilde1ratio} and \eqref{eq:qn1n2alphatilde2ratio} as $\widetilde{\alpha}\to0$.  Relative to this, negative selection in \eqref{eq:qn1n2alphatilde1ratio} makes additional copies of $A_1$ less probable whereas positive selection in \eqref{eq:qn1n2alphatilde2ratio} makes them more probable.  But \eqref{eq:qn1n2alphatilde1ratio} and \eqref{eq:qn1n2alphatilde2ratio} differ from the neutral result only by these constant factors.  Equation \eqref{eq:qn1n2alphatilde3ratio} is quite different.  With $\widetilde{\alpha}>1$, each additional copy of $A_1$ increases the sampling probability by a large factor, proportional to $n_2$, making this case similar to the case of strong positive selection in \eqref{eq:qn1n2largealphapos}.  This is as expected.  What is surprising is \eqref{eq:qn1n2alphatilde2ratio}, namely that strong selection ($\alpha\to\infty$) in favor of $A_1$ can be made to resemble neutrality simply by increasing the sample size relative to $n_1$.  

\subsubsection{Comparison to discrete Moran and Wright-Fisher models} \label{sec:relate}

We emphasize that our analyses in this work are of the Wright-Fisher diffusion model, given here as the SDE \eqref{eq:sde} with stationary density \eqref{eq:phix}.  It is of interest to know how well our results hold for discrete, exact models such as the Moran model and the Wright-Fisher model, especially as $\lvert\alpha\rvert\to\infty$ or $n\to\infty$ for finite $n_1$, in which cases we might expect the diffusion to be a relatively poor description of the dynamics.  In this section, we focus on \eqref{eq:qn1n2alphatilde1} and show that it can be obtained in a different way from a discrete-time Moran model, without first passing to the diffusion limit, but that this cannot be done in general starting from the discrete-time Wright-Fisher model.  

To leading order in $1/n_2$, \eqref{eq:qn1n2alphatilde1} is identical to the probability mass function of a negative binomial distribution with parameters $p=\lvert\widetilde{\alpha}\rvert/(1+\lvert\widetilde{\alpha}\rvert)$ and $r=\theta_1$.  \citet{CharlesworthAndHill2019} found this same result starting from the strong-selection approximation which \citet{Nei1968} had obtained for the diffusion model of \citet{Wright1937}.  Here selection \textit{against} $A_1$ is so strong that it never reaches appreciable frequency in the population.  In the limit, or ignoring the $O{\left(n_2^{-1}\right)}$ part in \eqref{eq:qn1n2alphatilde1}, this distribution sums to one over all $n_1\in\{0,1,2,\ldots\}$.  The corresponding sum for the degenerate distribution in \eqref{eq:qn1n2largen2} diverges, because under neutrality there is a non-trivial chance that $A_1$ reaches appreciable frequency in the population.   

Consider a discrete-time haploid Moran model with population size $N$, in which allele $A_2$ is favored by selection.  Specifically, $A_1$ and $A_2$ have equal chances of being chosen to reproduce but different chances of being chosen to die: each $A_1$ has an increased chance $1+s$ compared to each $A_2$.  Upon reproduction, the offspring of an $A_i$, $i\in\{1,2\}$, has type $A_i$ with probability $1-u_{3-i}$ and the other type $A_{3-i}$ with probability $u_{3-i}$. If there are currently $\ell$ copies of $A_1$ and $N-\ell$ copies of $A_2$, then in the next time step there will $\ell+1$ copies of $A_1$ with probability
\begin{linenomath*}
\begin{equation} 
\frac{N-\ell}{N-\ell+\ell(1+s)} \frac{\ell}{N} (1-u_2) + \frac{N-\ell}{N-\ell+\ell(1+s)} \frac{N-\ell}{N} u_1 \label{eq:pjplusone}
\end{equation}
\end{linenomath*}
and $\ell-1$ copies of $A_1$ with probability
\begin{linenomath*}
\begin{equation} 
\frac{\ell(1+s)}{N-\ell+\ell(1+s)} \frac{N-\ell}{N} (1-u_1) + \frac{\ell(1+s)}{N-\ell+\ell(1+s)} \frac{\ell}{N} u_2 . \label{eq:pjminusone}
\end{equation}
\end{linenomath*}
The fraction of $A_1$ converges to the Wright-Fisher diffusion process \eqref{eq:sde} as $N\to\infty$ if time is measured in units of $N(N-1)/2$ discrete steps, i.e. $dt=2/N(N-1)$, with $u_1=\theta_1/N$, $u_2=\theta_2/N$ and $s=-\alpha/N$.

As another way of obtaining \eqref{eq:qn1n2alphatilde1}, we assume that $s \gg u_1,u_2$.  In particular, let $Nu_1 \to \theta_1$ and $Nu_2 \to \theta_2$ as $N\to\infty$ just as in the diffusion model, but let $s$ be a constant.  Then we may appeal to the analogous model and limit process (iii) of \citet{KarlinAndMcGregor1964} which had no selection but instead assumed that $u_2 \gg u_1$.  Similarly here we expect that allele $A_1$ will be held in negligible relative frequency in the population and instead be present in a finite number of copies as $N\to\infty$, only here due to strong selection rather than strong mutation. 

In view of this scaling of the mutation rates by $N$ and for comparison with the Wright-Fisher model below, we rescale time so that it is measured in unit of generations, or $N$ time steps.  Then with $dt=1/N$, we can rewrite \eqref{eq:pjplusone} and \eqref{eq:pjminusone} as
\begin{linenomath*}
\begin{equation*} 
\left( \ell \frac{N-\ell}{N-\ell+\ell(1+s)} (1-u_2) + \frac{N-\ell}{N-\ell+\ell(1+s)} (N-\ell) u_1 \right) dt , 
\end{equation*}
\end{linenomath*}
and 
\begin{linenomath*}
\begin{equation*} 
\left( \ell(1+s) \frac{N-\ell}{N-\ell+\ell(1+s)} (1-u_1) + \frac{\ell(1+s)}{N-\ell+\ell(1+s)} \ell u_2 \right) dt . 
\end{equation*}
\end{linenomath*}
Then in the limit $N\to\infty$, \eqref{eq:pjplusone} and \eqref{eq:pjminusone} describe to a continuous-time process in which 
\begin{linenomath*}
\begin{equation}
\ell \to 
  \begin{cases}
    \ell+1 & \textrm{at rate } \,\, {\left( \ell + \theta_1 \right)} \quad  \\[4pt]
    \ell-1 & \textrm{at rate } \,\, \ell (1+s) . \quad  
  \end{cases} \label{eq:bdell}
\end{equation}
\end{linenomath*}
In other words, the number of copies of $A_1$ in the population evolves according to a birth-death process with immigration where the birth rate is $\lambda = 1$, the death rate is $\mu = 1+s$ and the immigration rate is $\kappa = \theta_1$.  From (52) in \citet{Kendall1949}, the distribution of the number of copies of $A_1$ in the population at stationarity will be negative binomial with parameters $1-\lambda/\mu = s/(1+s)$ and $\kappa/\lambda = \theta_1$, or 
\begin{linenomath*}
\begin{equation} 
p(\ell) = \binom{\ell+\theta_1-1}{\ell} {\left(\frac{1}{1+s}\right)}^\ell {\left(\frac{s}{1+s}\right)}^{\theta_1}, \qquad \ell\in\Z_+. \label{eq:pell}
\end{equation}
\end{linenomath*}

In getting to \eqref{eq:qn1n2alphatilde1} above, which we note is for $\alpha<0$, we first applied the diffusion limit then let the selection parameter $\lvert\alpha\rvert$ be large, specifically proportional to the number $n_2$ of copies of $A_2$ in a sample of large size $n$ for a given fixed number $n_1$ of copies of $A_1$.  In the current haploid Moran model with selection against $A_1$, $\lvert\alpha\rvert=Ns$, and the scalar $\lvert\widetilde{\alpha}\rvert=\lvert\alpha\rvert/n_2=Ns/n_2$.  Define $a \coleq n_2/N$.  Then $\lvert\widetilde{\alpha}\rvert=s/a$ and we may think of $a$ as (close to) the proportion of the population sampled, because $n_2/N \sim n/N$. 

If the number of copies of $A_1$ in the population is $\ell$, then the probability there are $n_1$ copies in a sample of size $n$ taken without replacement from the total population of size $N$ is given by the hypergeometric distribution 
\begin{linenomath*}
\begin{equation} 
p(n_1 \vert \ell ; N) = \frac{\binom{\ell}{n_1}\binom{N-\ell}{n-n_1}}{\binom{N}{n}}, \qquad n_1=0,1,2,\ldots,\ell . \label{eq:pn1givenellN}
\end{equation}
\end{linenomath*}
Since $n-n_1=n_2=a N$ and taking $N \to \infty$, \eqref{eq:pn1givenellN} converges to the binomial distribution  
\begin{linenomath*}
\begin{equation} 
p(n_1 \vert \ell) = \binom{\ell}{n_1} a^{n_1} (1-a)^{\ell-n_1}, \qquad n_1=0,1,2,\ldots,\ell. \label{eq:pn1givenell}
\end{equation}
\end{linenomath*}
This gives another route to \eqref{eq:qn1n2alphatilde1}, namely using \eqref{eq:pell} and \eqref{eq:pn1givenell}, and setting $y=\ell-n_1$,
\begin{linenomath*}
\begin{align} 
p(n_1) &= \sum_{\ell=n_1}^\infty p(n_1 \vert \ell) p(\ell) \notag \\[5pt]
 &= \sum_{\ell=n_1}^\infty \binom{\ell}{n_1} a^{n_1} (1-a)^{\ell-n_1} \binom{\ell+\theta_1-1}{\ell} {\left(\frac{1}{1+s}\right)}^\ell {\left(\frac{s}{1+s}\right)}^{\theta_1} \notag \\[5pt]
&= \frac{1}{n_1!} {\left(\frac{a}{1+s}\right)}^{n_1} {\left(\frac{s}{a+s}\right)}^{\theta_1} \sum_{y=0}^\infty \frac{\Gamma(y+n_1+\theta_1)}{\Gamma(y+\theta_1)} \binom{y+\theta_1-1}{y} {\left(\frac{1-a}{1+s}\right)}^{y} {\left(\frac{a+s}{1+s}\right)}^{\theta_1} \notag \\[5pt]
&= \frac{\Gamma(n_1+\theta_1)}{n_1!\Gamma(\theta_1)} {\left(\frac{a}{a+s}\right)}^{n_1} {\left(\frac{s}{a+s}\right)}^{\theta_1} . \label{eq:altqn1n2alphatilde1}
\end{align}
\end{linenomath*}
The end result \eqref{eq:altqn1n2alphatilde1} is equal to the leading order part of \eqref{eq:qn1n2alphatilde1} since $\lvert\widetilde{\alpha}\rvert=s/a$. 

We can contrast this with a discrete-time haploid Wright-Fisher model with population size $N$, in which time is already measured in generations.  Under the same assumptions that gave \eqref{eq:bdell}, namely $s$ constant and $u_1,u_2\propto1/N$ as $N\to\infty$, we can use equation (33) in \citet{Nagylaki1990} which specifies that, conditional on the number $\ell_g$ of copies of $A_1$ in generation $g$, the number $\ell_{g+1}$ has the Poisson distribution
\begin{linenomath*}
\begin{equation} 
\ell_{g+1} \vert \ell_{g} \sim \textrm{Poisson}{\left(\frac{\theta_1}{2} + \ell_g {\left(1-\frac{s}{2}\right)}\right)} \label{eq:nag33}
\end{equation}
\end{linenomath*}
with $\theta_1 \coleq 2Nu_1$.  Now $\ell$ evolves by a Poisson branching process with Poisson immigration rather than by the birth-death process with immigration in \eqref{eq:bdell}.  Although here too the number of copies of $A_1$ in the population will converge to a stationary distribution \citep{Heathcote1965}, it will not in general be a negative binomial distribution.  Thus \eqref{eq:qn1n2alphatilde1} is consistent with the per-generation dynamics of rare alleles in the Moran model but not in the Wright-Fisher model.  

\subsection{Asymptotics for population frequencies conditional on the sample} \label{sec:populationfreqs}

Next, we obtain asymptotics for the posterior probability density $\phi_\alpha^{(n_1,n_2)}$ in \eqref{eq:condphi2}.  Let $\mathcal{P}([0,1])$ be the space of probability measures on $[0,1]$ endowed with the weak convergence topology (i.e with test functions in the space $C_b([0,1])$ of bounded continuous functions on $[0,1]$).
\begin{lemma}\label{L:Convphi}
Let $n_1\in\mathbb{N}$ be fixed. The following  convergences in $\mathcal{P}([0,1])$ hold.
\begin{itemize}
\item[(i)] Suppose $n_2\in\mathbb{N}$ is fixed and $\alpha\to\infty$. Then $\phi^{(n_1,n_2)}_{\alpha}(x)\,dx \to \delta_1$.
\item[(ii)] Suppose $\alpha\in\R$ is fixed and $n_2\to\infty$. Then  $\phi^{(n_1,n_2)}_{\alpha}(x)\,dx \to \delta_0$.
\item[(iii)] Suppose $\alpha=\widetilde{\alpha}n_2+c$ where $\widetilde{\alpha},\,c\in\R$ are fixed and $n_2\to\infty$. Then 
\begin{linenomath*}
\begin{align}\label{Dirac_posterior}
\phi^{(n_1,n_2)}_{\alpha}(x)\,dx &\to
\begin{dcases}
\delta_0 & \textrm{when } \quad  \widetilde{\alpha}\in (-\infty,1] \\[6pt]
\delta_{1-1/\widetilde{\alpha}} & \textrm{when } \quad  \widetilde{\alpha}\in (1,\infty) 
\end{dcases}
\end{align}
\end{linenomath*}
\end{itemize}
where $\delta_x$ is the Dirac delta measure.
\end{lemma}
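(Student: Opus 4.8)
\section*{Proof proposal for Lemma \ref{L:Convphi}}

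The plan is to work with the posterior density in its explicit unnormalized form
\[
h(x) = x^{\theta_1+n_1-1}(1-x)^{\theta_2+n_2-1}e^{\beta x}, \qquad 0<x<1,
\]
which is proportional to $\phi_\beta^{(n_1,n_2)}$ by \eqref{eq:condphi2_simplified}, and to show in each regime that the normalized mass concentrates at the claimed location $a$. Since $[0,1]$ is compact, it suffices to verify that for every $\epsilon>0$,
\[
\frac{\int_{[0,1]\cap\{|x-a|\ge\epsilon\}} h(x)\,dx}{\int_0^1 h(x)\,dx} \longrightarrow 0,
\]
which yields $\phi_\beta^{(n_1,n_2)}(x)\,dx\to\delta_a$ weakly. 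The engine in every case is Laplace's method applied to $\log h(x)=(\theta_1+n_1-1)\log x+(\theta_2+n_2-1)\log(1-x)+\beta x$.

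For (i), with $\theta_2+n_2$ fixed and $\beta\to\infty$, I would substitute $u=\beta(1-x)$ near $x=1$: the density of $u$ converges after normalization to a $\mathrm{Gamma}(\theta_2+n_2,1)$ law, so $1-x=u/\beta\to0$ in probability and the limit is $\delta_1$. Equivalently, the tail bound $\int_0^{1-\epsilon}h\le C\,e^{\beta(1-\epsilon)}$ against the local estimate $\int_{1-\epsilon}^1 h\sim \Gamma(\theta_2+n_2)\,\beta^{-(\theta_2+n_2)}e^{\beta}$ makes the ratio of order $\beta^{\theta_2+n_2}e^{-\beta\epsilon}\to0$. Case (ii), with $\beta$ fixed and $n_2\to\infty$, is the mirror image: substituting $v=n_2 x$ near $x=0$ gives a $\mathrm{Gamma}(\theta_1+n_1,1)$ limit for $v$, so $x=v/n_2\to0$ and the limit is $\delta_0$, with the same polynomial-versus-exponential comparison killing the tail.

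Case (iii) is the substantive one. Both $\beta=\tilde\beta n_2$ and the exponent $\theta_2+n_2$ grow, so dividing by $n_2$ produces the rate function
\[
\frac{1}{n_2}\log h(x) \longrightarrow g(x) \coleq \log(1-x)+\tilde\beta x
\]
uniformly on compact subsets of $(0,1)$, the $\log x$ term contributing only $O(1)$. Since $g'(x)=\tilde\beta-(1-x)^{-1}$, the maximizer of $g$ on $[0,1]$ is the interior point $x^\ast=1-1/\tilde\beta$ when $\tilde\beta>1$, and the boundary point $0$ when $\tilde\beta\le1$ (there $g$ is strictly decreasing on $(0,1)$). For $\tilde\beta>1$ one has $g''(x^\ast)=-\tilde\beta^2<0$, so a nondegenerate Laplace expansion concentrates the mass in an $n_2^{-1/2}$-window about $x^\ast$, giving $\delta_{1-1/\tilde\beta}$. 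For $\tilde\beta\le1$ the maximum is at $0$ with $g(0)=0$, and concentration at $0$ follows by bounding $\int_\epsilon^1 h\le C\,e^{n_2 g(\epsilon)}$, with $g(\epsilon)<0$, against a lower bound for $\int_0^\epsilon h$ that decays only polynomially in $n_2$.

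The main obstacle is the critical value $\tilde\beta=1$, where the interior critical point collides with the boundary ($x^\ast=1-1/\tilde\beta\to0$ as $\tilde\beta\to1^+$) and the first-order boundary behavior degenerates: $g'(0)=\tilde\beta-1=0$, so the naive estimate $\int_0^\epsilon x^{\theta_1+n_1-1}e^{n_2(\tilde\beta-1)x}\,dx$ with its $n_2^{-(\theta_1+n_1)}$ scaling breaks down. I would handle this by keeping the quadratic term $g(x)\approx-x^2/2$ and rescaling $x=w/\sqrt{n_2}$, giving $\int_0^\epsilon h\gtrsim n_2^{-(\theta_1+n_1)/2}$; this is still only polynomial decay, so it dominates the exponentially small tail and the limit remains $\delta_0$, consistent with the continuity of the location $1-1/\tilde\beta$ at $\tilde\beta=1$. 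As a cross-check one may instead use the exact normalizer $\int_0^1 h = B(\theta_1+n_1,\theta_2+n_2)\,{}_1F_1(\theta_1+n_1;\theta_1+\theta_2+n;\beta)$ together with the confluent-hypergeometric asymptotics \eqref{eq:1F1ws1} and \eqref{eq:1F1ws3} already invoked for the sampling probabilities, matching the concentration points read off from \eqref{eq:qn1n2betatilde1}--\eqref{eq:qn1n2betatilde3}.
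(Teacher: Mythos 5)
Your proposal is correct and takes essentially the same route as the paper: both rest on Laplace's method applied to the rate function $S(x)=\tilde\beta x+\log(1-x)$ (your $g$), with the dichotomy between the boundary maximizer $0$ for $\tilde\beta\le 1$ and the interior maximizer $1-1/\tilde\beta$ for $\tilde\beta>1$, and conclude weak convergence by showing the normalized mass outside a neighborhood of the maximizer vanishes. Your additional touches (the Gamma rescaling limits in cases (i)--(ii) and the explicit quadratic treatment of the critical case $\tilde\beta=1$) are sound refinements of the same argument rather than a different method.
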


The proof of Lemma \ref{L:Convphi} is given in \ref{sec:lemmaproofs}.

\section{Conditional coalescence in a random background} \label{sec:bes}

In this section, we extend the approach of coalescence in a random background in \citet{BartonEtAl2004} to study the number and timing of latent mutations and other asymptotic properties of the conditional gene genealogy given the sample frequencies of $A_1$ and $A_2$. We also extend our results to  time-varying populations in Section \ref{S:varying}.  While the setting of \citet{BartonEtAl2004} covers the case of a neutral locus linked to the selected locus, here we focus on the selected locus.

Suppose we are given a sample from the selected locus at the present time $t=0$, and that we know the allelic types of the sample but we do not know how the sample was produced.  What is the genealogy of the sample?  This question was answered by \citet{BartonEtAl2004}, who modeled the ancestral process using the structured coalescent with allelic types as subpopulations.  The structured coalescent can be a model of subdivision with migration between local populations \citep{Takahata1988,Notohara1990,Herbots1997} or a model of selection with mutation between allelic types \citep{KaplanEtAl1988,DardenEtAl1989}.  For samples from a population at stationarity as in Section~\ref{sec:intro}, \citet{BartonEtAl2004} proved that this could be done rigorously starting with a Moran model with finite $N$ then passing to the diffusion limit.  \citet{BartonAndEtheridge2004} explored some properties of gene genealogies under this model, and \citet{EtheridgeEtAl2006} used the same idea to describe genetic ancestries following a selective sweep. 

Even if the sample frequencies are known, the allele frequencies in the population are unknown.  A key feature of this method is to model allele-frequency trajectories backward in time.  As pointed out by \citet{BartonEtAl2004}, the Moran model with finite $N$ is \textit{reversible}, meaning that at stationarity the time-reversed process is the same (in distribution) as the forward-time Moran process.  This is not a property of the Wright-Fisher model with finite $N$ but does hold for their shared diffusion limit \eqref{eq:sde} with stationary density \eqref{eq:phix}; see for instance \cite{millet1989integration} for why this holds.  Figure \ref{fig:scenario1} gives an illustration of a genealogy with mutations and allele frequencies varying backward in time.

Looking backward in time, let $p^{(N)}_t$ be the fraction of type 1 in the population and $n^{(N)}_i(t)$ be the number of ancestral lineages of type $i\in\{1,2\}$ at time $t$.  From \citet[Lemma 2.4]{BartonEtAl2004}, under the Moran model with stationary distribution, $(p^{(N)}_t,\,n^{(N)}_1(t),\,n^{(N)}_2(t))_{t\in\R_+}$ is a Markov process for each fixed $N$. Furthermore, \citet[Theorem 5.1]{BartonEtAl2004} describes the joint convergence of the processes  $(p^{(N)},\,n^{(N)}_1,\,n^{(N)}_2)$ as $N\to\infty$. 

\begin{lemma}[Lemma 2.4 and Theorem 5.1 of \citet{BartonEtAl2004}]\label{L:Diff_Moran}
Let $p^{(N)}_t$ and $n^{(N)}_i(t)$  be the fraction of type 1 in the population and  the number of ancestral lineages of type $i$ respectively, at time $t$ backward, under the stationary Moran model. Then $(p^{(N)},\,n^{(N)}_1,\,n^{(N)}_2)$ is a Markov process for each $N\in\mathbb{N}$. As $N\to\infty$, this process converges in distribution in the Skorohod space $D(\R_+,\,[0,1]\times\Z_+\times \Z_+)$ to a Markov process 
$(p_t,\,n_1(t),\,n_2(t))_{t\in\R_+}$ described as follows: 
\begin{itemize}
\item[(i)] $\,(p_t)_{t\in\R_+}$ is a solution to equation \eqref{eq:sde} with stationary initial density $\phi_\alpha$. In particular, it does not depend on $(n_1(t),\,n_2(t))_{t\in\R_+}$.
\item[(ii)] Suppose the current state is $(p,m_1,m_2)$. Then $(n_1(t),\,n_2(t))_{t\in\R_+}$ evolves as
\begin{linenomath*}
\begin{align}\label{E:genealogy_Moran}
(m_1,m_2) &\to 
\begin{cases}
(m_1-1,m_2) & \textrm{at rate} \quad \frac{1}{p}\binom{m_1}{2}  \quad \qquad \text{ coalescence of type 1} \\[6pt]
(m_1-1,m_2+1) & \textrm{at rate} \quad \frac{1-p}{p}\,m_1\,\frac{\theta_1}{2} \qquad \text{ mutation of 1 to 2}\\[6pt]
(m_1,m_2-1) & \textrm{at rate} \quad \frac{1}{1-p}\binom{m_2}{2} \qquad \text{ coalescence of type 2} \\[6pt]
(m_1+1,m_2-1) & \textrm{at rate} \quad \frac{p}{1-p}\,m_2\,\frac{\theta_2}{2} \qquad \text{ mutation of 2 to 1}\\[6pt]
\end{cases}
\end{align}
\end{linenomath*}
\end{itemize}
\end{lemma}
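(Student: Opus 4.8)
The plan is to prove this by the standard machinery for convergence of Markov processes --- generator convergence together with tightness, i.e.\ the martingale-problem approach of Ethier and Kurtz --- after first securing the Markov property at finite $N$ through reversibility. I would regard $(p^{(N)},\,n^{(N)}_1,\,n^{(N)}_2)$ as a continuous-time Markov chain obtained from the discrete Moran steps by the usual time rescaling under which the frequency component converges to the diffusion \eqref{eq:sde}, and then identify the limiting generator acting on a core of functions of the form $g(p)\,h(m_1,m_2)$ with $g$ smooth on $[0,1]$ and $h$ bounded on $\Z_+\times\Z_+$.

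First I would establish the Markov property for fixed $N$. The essential input is the reversibility of the stationary Moran model noted in the text: under its stationary distribution the time-reversed process is again a Moran process. This lets me construct the backward genealogy of the sample as a functional of a forward Moran trajectory and run it jointly with the population frequency. Conditioned on the present state $(p,m_1,m_2)$ --- the current frequency together with the current numbers of ancestral lineages of each type --- the law of the next backward step depends only on that state, because the Moran transition kernel is frequency-dependent but memoryless and each lineage is assigned a parent uniformly among the reproducing individuals of the appropriate type. This yields the Markov property and pins down the finite-$N$ generator.

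Next I would compute the finite-$N$ rates and pass to the limit. For the frequency component, the rescaled Moran chain converges to the diffusion \eqref{eq:sde}, and reversibility guarantees that the time-reversed diffusion coincides in law with \eqref{eq:sde} started from $\phi_\beta$, giving part (i). For the lineage component, conditioned on $p^{(N)}_t\approx p$: two type-$1$ lineages, traced among the $\approx pN$ type-$1$ individuals, share a parent at a per-pair rate that scales to $1/p$, yielding the coalescence rate $\frac{1}{p}\binom{m_1}{2}$; a type-$1$ lineage traces back to a type-$2$ parent through a forward $A_2\to A_1$ mutation, whose per-lineage backward rate scales to $\frac{1-p}{p}\frac{\theta_1}{2}$, and symmetrically for type $2$. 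Summing over lineages produces the four rates in \eqref{E:genealogy_Moran}. Because the sample carries $O(1)$ lineages while the population has $N$ individuals, removing or relabeling a lineage perturbs the frequency by $O(1/N)$, so the lineage dynamics do not feed back into the frequency to leading order: the background $(p_t)$ evolves autonomously, which is the one-way coupling asserted in (i).

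The main obstacle is the joint convergence in distribution, not the individual rate computations. Two points require care. First, I must establish tightness of $(p^{(N)},\,n^{(N)}_1,\,n^{(N)}_2)$ and convergence of the full generator uniformly on the core above, and then invoke an Ethier--Kurtz limit theorem to upgrade generator convergence to weak convergence of paths. Second, and more delicate, the rates carry the singular factors $1/p$ and $1/(1-p)$, which blow up at the boundaries. Since $\theta_1,\theta_2>0$ the diffusion stays in the open interval $(0,1)$ almost surely, but one must still control the occupation time of the frequency near $0$ and $1$ so that the limiting lineage process makes only finitely many jumps on bounded time intervals almost surely and the prelimit rates do not concentrate mass pathologically. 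Securing this boundary control --- and hence that the limit is a genuine Markov jump process with right-continuous paths driven by the autonomous diffusion --- is the crux; by comparison the rate identifications and the finite-$N$ Markov property are routine.
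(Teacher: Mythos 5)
The first thing to note is that the paper does not prove this lemma at all: it is imported verbatim (up to notation and a factor-of-two time change, which the paper discusses immediately after the statement) from Lemmas 2.4 and 3.1 of \citet{BartonEtAl2004}. So there is no internal proof to compare against; the relevant comparison is with the strategy of that cited work.

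Measured against that, your outline is essentially a faithful reconstruction of how the result is actually proved. The two pillars you identify are the right ones and are the ones used in the source: (a) reversibility of the stationary Moran model is what makes the backward-in-time triple $(p^{(N)},n^{(N)}_1,n^{(N)}_2)$ Markov at finite $N$ --- this is genuinely the crux of the finite-$N$ statement, since without stationarity and reversibility the backward frequency path and the genealogical assignments are entangled in a non-Markovian way; and (b) the passage to the limit goes through generator convergence plus tightness in the Ethier--Kurtz framework, with the frequency component converging to the reversible stationary diffusion \eqref{eq:sde} (so that its time reversal has the same law, giving part (i)) and the lineage component contributing the structured-coalescent rates of \eqref{E:genealogy_Moran}. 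Your rate computations are correct: per-pair coalescence within the type-$1$ class of relative size $p$ scales to $1/p$, and the backward mutation rate of a type-$1$ lineage is the forward $A_2\to A_1$ flux $\tfrac{\theta_1}{2}(1-p)$ divided by the class size $p$, i.e.\ $\tfrac{\theta_1}{2}\tfrac{1-p}{p}$, and symmetrically for type $2$. You also correctly flag the one genuinely delicate analytic issue, namely that the rates are singular at $p\in\{0,1\}$ and one must control boundary occupation times to get a well-defined limiting jump process. The only caveat is that your proposal is a plan rather than a proof: the tightness argument and the boundary control are named, accurately, as the hard steps, but not executed --- which is precisely the technical content that \citet{BartonEtAl2004} supply and that this paper chose to cite rather than reproduce.
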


We compare the notation here with that of \citet{BartonEtAl2004} and \citet{etheridge2011some}. Type 1 here is their type P, type 2 here is their type Q. So,  $\frac{\theta_2}{2}$ (resp. $\frac{\theta_1}{2}$) here is $\mu_1$ (resp. $\mu_2$) in \citet{BartonEtAl2004}, and is  $v_1$ (resp. $v_2$) in \citet[eqn. (2.11)]{etheridge2011some}. The rescaled selection coefficient $\frac{\alpha}{2}$  here is $s$ in \citet{BartonEtAl2004}. 
The Moran model in \citet{BartonEtAl2004} has population size $2N$ and each of the $(2N)(2N-1)/2$ unordered pairs is picked to interact (one dies and immediately the other reproduces) at rate $1/2$.
Therefore, the diffusion limit in \citet[Lemma 3.1]{BartonEtAl2004} is slower by a factor of $1/2$ than the limit we consider here in this paper. 

The proof of \citet[Theorem 5.1]{BartonEtAl2004} leads to more information for the limiting process. Let  $n^{(N),obs}_i(t)$ be the number of type $i$ lineages at backward time $t$ which are ancestral to the $n^{(N)}_i(0)$ observed in the sample. 
Thus $n^{(N),obs}_i(t)$ is non-increasing in $t$, but $n^{(N)}_i(t)$ can increase as $t$ increases due to mutations from type $3-i$ to type $i$. Clearly, $n^{(N),obs}_i(0)=n^{(N)}_i(0)$.
Note that a mutation from type $3-i$ to type $i$ backward in time corresponds to a mutation from type $i$ to type $3-i$ forward in time.
To keep track of the number of mutation events versus the number of coalescent events, we let $L^{(N)}_i(t)$ be the total number of latent mutations for type $i$ during $[0,t]$ backward in time. 
We have the following generalization of Lemma \ref{L:Diff_Moran}.
\begin{lemma}[Joint convergence]\label{L:Diff_Moran_Gen}
Under the stationary Moran model, the backward process $$(p^{(N)};\,n^{(N)}_1,n^{(N),obs}_1, L^{(N)}_1;\,n^{(N)}_2,n^{(N),obs}_2, L^{(N)}_2)$$ is a Markov process for each fixed $N\in\mathbb{N}$. As $N\to\infty$, this process converges in distribution in the Skorohod space $D(\R_+,\,[0,1]\times\Z_+^6)$ to a continuous-time Markov process 
$$(p_t;\,n_1(t),n^{obs}_1(t), L_1(t);\,n_2(t),n^{obs}_2(t), L_2(t))_{t\in\R_+}$$  
such that 
\begin{itemize}
\item[(i)] $\,(p_t)_{t\in\R_+}$ is a solution to equation \eqref{eq:sde}  with stationary initial density $\phi_\alpha$. In particular, it does not depend on $(n_1(t),n^{obs}_1(t), L_1(t);\,n_2(t),n^{obs}_2(t), L_2(t))_{t\in\R_+}$ .
\item[(ii)] At state  $(p;\,m_1, a_1, \ell_1;\;m_2, a_2, \ell_2)$, the process $(n_1(t),n^{obs}_1(t), L_1(t);\,n_2(t),n^{obs}_2(t), L_2(t))_{t\in\R_+}$ evolves as $(m_1, a_1, \ell_1;\;m_2, a_2, \ell_2) \to$
\begin{linenomath*}
\begin{align*}
\begin{cases}
(m_1-1, a_1-1, \ell_1;\;m_2, a_2, \ell_2) & \textrm{at rate} \quad \frac{1}{p}\binom{a_1}{2}  \qquad \qquad \quad \text{ coalescence of two type 1}^{obs} \\[6pt]
(m_1-1, a_1, \ell_1;\;m_2, a_2, \ell_2) & \textrm{at rate} \quad \frac{1}{p}\left[\binom{m_1}{2}-\binom{a_1}{2} \right]  \quad \text{ other coalescence of  type 1} \\[6pt]
(m_1-1, a_1-1, \ell_1+1;\;m_2+1, a_2, \ell_2)  & \textrm{at rate} \quad \frac{1-p}{p}\,a_1\,\frac{\theta_1}{2} \qquad\qquad \text{ mutation of 1}^{obs}\\[6pt]
(m_1-1, a_1, \ell_1;\;m_2+1, a_2, \ell_2)  & \textrm{at rate} \quad \frac{1-p}{p}\,(m_1-a_1)\,\frac{\theta_1}{2} \quad \text{ mutation of other type 1}
\end{cases}
\end{align*}
\end{linenomath*}
and, similarly,
\begin{linenomath*}
\begin{align*}
\begin{cases}
(m_1, a_1, \ell_1;\;m_2-1, a_2-1, \ell_2) & \textrm{at rate} \quad \frac{1}{1-p}\binom{a_2}{2}  \qquad \qquad \quad \text{ coalescence of two type 2}^{obs} \\[6pt]
(m_1, a_1, \ell_1;\;m_2-1, a_2, \ell_2) & \textrm{at rate} \quad \frac{1}{1-p}\left[\binom{m_2}{2}-\binom{a_2}{2} \right]  \quad \text{ other coalescence of type 2} \\[6pt]
(m_1+1, a_1, \ell_1;\;m_2-1, a_2-1, \ell_2+1)  & \textrm{at rate} \quad \frac{p}{1-p}\,a_2\,\frac{\theta_2}{2} \qquad\qquad \text{ mutation of type 2}^{obs}\\[6pt]
(m_1+1, a_1, \ell_1;\;m_2-1, a_2, \ell_2)  & \textrm{at rate} \quad \frac{p}{1-p}\,(m_2-a_2)\,\frac{\theta_2}{2} \quad \text{ mutation of other type 2}
\end{cases}
\end{align*}
\end{linenomath*}
\end{itemize}
\end{lemma}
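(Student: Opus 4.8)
The plan is to obtain the augmented process as an \emph{enrichment} of the process in Lemma~\ref{L:Diff_Moran}, by attaching to each ancestral lineage a label recording whether it is \emph{observed} (i.e.\ descends, without intervening coalescence or type change, from one of the $n_i^{(N)}(0)$ originally sampled lineages) and by maintaining a running count of type-$i$ latent mutations. Since the finite-$N$ Moran genealogy is built from exchangeable lineages, the identities of the observed lineages enter the dynamics only through their \emph{counts} $a_i$ relative to the totals $m_i$. Hence the enriched state $(p;\,m_1,a_1,\ell_1;\,m_2,a_2,\ell_2)$ is again Markov, and no finer information than these counts is needed to predict the next transition. This exchangeability reduction is the conceptual core of the argument.

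First I would verify the Markov property and read off the finite-$N$ rates by apportioning each event of Lemma~\ref{L:Diff_Moran} between observed and non-observed lineages. Conditioned on the current state, each coalescence of type-1 lineages picks a uniformly random pair, so both members are observed with probability $\binom{a_1}{2}/\binom{m_1}{2}$; this splits the rate $\frac1p\binom{m_1}{2}$ into $\frac1p\binom{a_1}{2}$ (coalescence of two observed lineages, decrementing $a_1$) and $\frac1p\bigl[\binom{m_1}{2}-\binom{a_1}{2}\bigr]$ (every other coalescence, leaving $a_1$ fixed). Likewise a type-1 mutation hits a uniformly random type-1 lineage, so with probability $a_1/m_1$ it strikes an observed one, splitting the rate $\frac{1-p}{p}m_1\frac{\theta_1}{2}$ into $\frac{1-p}{p}a_1\frac{\theta_1}{2}$ (an observed lineage changes type, ceases to be observed, and $\ell_1$ increments) and $\frac{1-p}{p}(m_1-a_1)\frac{\theta_1}{2}$ (a non-observed lineage, with $a_1,\ell_1$ unchanged). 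The same apportionment applies to type~2. Summing the split rates recovers exactly the rates in \eqref{E:genealogy_Moran}, so the enriched chain is a consistent refinement of the one in Lemma~\ref{L:Diff_Moran}.

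Next I would pass to the limit $N\to\infty$. Because the new coordinates are dominated by quantities already controlled in Lemma~\ref{L:Diff_Moran}, namely $a_i\le n_i^{(N)}$ and $L_i^{(N)}$ nondecreasing with jumps only at the type-$i$ mutation events (whose total rate is that of the unenriched process), tightness of the enriched family follows from the tightness established for $(p^{(N)},n_1^{(N)},n_2^{(N)})$ in the proof of \cite[Lemma 3.1]{BartonEtAl2004}. The splitting probabilities $\binom{a_i}{2}/\binom{m_i}{2}$ and $a_i/m_i$ are continuous in the state and independent of $N$, so the finite-$N$ generator converges to the limiting generator of part (ii), and the limit is pinned down by the martingale problem exactly as in Barton et al.; the frequency process $p_t$ decouples and solves \eqref{eq:sde} with stationary law $\phi_\beta$ as before.

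I expect the main obstacle to be not the algebra of the rates but the analytic care needed at the boundary: the factors $\frac1p$ and $\frac{1}{1-p}$ make the jump rates blow up as $p_t\to 0$ or $p_t\to 1$, so well-posedness of the limiting generator (no explosion, integrability of the rates along trajectories) must be checked. This is already resolved for the three-coordinate process in \cite{BartonEtAl2004}, and since the enrichment adds no jumps beyond those of the original process, the same control transfers; the one step meriting explicit attention is the verification that the enriched coordinates create no additional accumulation of jumps.
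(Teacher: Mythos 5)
Your proposal is correct and takes essentially the same approach the paper intends: the paper states this lemma without a separate proof, presenting it as an immediate enrichment of Lemma~\ref{L:Diff_Moran} obtained from the proof of Lemma~3.1 of \citet{BartonEtAl2004}, via exactly the exchangeability-based bookkeeping you describe (uniformly chosen pairs/lineages split the coalescence rate as $\binom{a_1}{2}/\binom{m_1}{2}$ and the mutation rate as $a_1/m_1$, with the merged or mutated lineage's observed status handled as you state). Your write-up merely supplies the details the paper leaves implicit — the finite-$N$ rate apportionment, the transfer of tightness and the martingale-problem identification, and the boundary behavior of the $1/p$ and $1/(1-p)$ factors — all of which are consistent with the stated rates.
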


\medskip

Now suppose that in addition to knowing the sample counts $n_1$ and $n_2$, we also know that these are the outcome of uniform random sampling, as in \eqref{eq:qn1}.
Let $\P_{\bf n}$ be the conditional probability measure of the ancestral process in Lemma \ref{L:Diff_Moran_Gen} including both $p_t$ and the lineage dynamics, given that a uniformly picked sample has allelic counts ${\bf n}=(n_1,n_2)$. 
Under $\P_{\bf n}$, the limiting process in Lemma \ref{L:Diff_Moran_Gen} has initial frequency $p_0\sim \phi^{(n_1,n_2)}_{\alpha}(x)\,dx$ given by \eqref{eq:condphi2}, 
and  $(n_1(0),n^{obs}_1(0), L_1(0);\,n_2(0),n^{obs}_2(0), L_2(0))=(n_1,n_1,0;\,n_2,n_2,0)$. This follows from Bayes' theorem, because
by part (i) of  Lemmas \ref{L:Diff_Moran} and  \ref{L:Diff_Moran_Gen}, the initial frequency has prior density given by \eqref{eq:phix}.

Focus on type 1 for now.
We care about the sequence of events (coalescence and mutation) backward in time for type 1, and the timing of these events. At each of these events, the number of  type 1 lineages decreases by 1, either by coalescence or by mutation from type 1 to type 2. Hence $n^{obs}_1(t)$ is non-increasing, but $n_1(t)$ can increase over time (backward) due to mutations from type 2 to type 1 (see Figure \ref{fig:scenario1} for an illustration). Furthermore, the difference $n_1(t)- n^{obs}_1(t)$ is the number of type 1 at time $t$ that came from lineages that are of type 2 in the sample (at $t=0$).
We do not care about these $n_1(t)- n^{obs}_1(t)$ lineages, nor  the mutation events from type 2 to type 1. Analogous considerations hold for type 2.

From Lemma \ref{L:Diff_Moran_Gen}, we immediately obtain the following simplified description for the conditional ancestral process in the limit $N\to\infty$ for the two types. This description is the starting point of our  analysis for constant population size; later in Proposition \ref{prop:BothType_vary} we also obtain the analogous result for time-varying population size.
\begin{prop}[Conditional ancestral process]\label{prop:BothType}
The process $(p_t,\,n^{obs}_1(t),\,L_1(t),\,n^{obs}_2(t),\,L_2(t))_{t\in\R_+}$ under $\P_{\bf n}$ is a Markov process with state space $[0,1]\times \{0,1,\cdots,n_1\}^2 \times  \{0,1,\cdots,n_2\}^2$ described as follows: 
\begin{itemize}
\item[(i)] $\,(p_t)_{t\in\R_+}$ is a solution to \eqref{eq:sde} with initial density $\phi^{(n_1,n_2)}_{\alpha}$. In particular, it does not depend on the process $(n^{obs}_1,\,L_1,\,n^{obs}_2,\,L_2)$.
\item[(ii)] The process $(n^{obs}_1,\,L_1,\,n^{obs}_2,\,L_2)$ starts at $(n_1,0,n_2,0)$.
When the current state is $(p,a_1,\ell_1,a_2,\ell_2)$, this process evolves as
\begin{linenomath*}
\begin{align}\label{E:Evo_type1}
(a_1,\ell_1) &\to 
\begin{dcases}
(a_1-1,\ell_1) & \textrm{at rate} \quad \frac{1}{p}\binom{a_1}{2}  \quad \qquad \text{ coalescence of type 1}^{obs} \\[6pt]
(a_1-1,\ell_1+1) & \textrm{at rate} \quad \frac{1-p}{p}\,a_1\,\frac{\theta_1}{2} \qquad  \text{ mutation of 1}^{obs}\text{ to 2}\\[6pt]
\end{dcases}
\end{align}
\end{linenomath*}
and, independently,
\begin{linenomath*}
\begin{align}\label{E:Evo_type2}
(a_2,\ell_2) &\to 
\begin{dcases}
(a_2-1,\ell_2) & \textrm{at rate} \quad \frac{1}{1-p}\binom{a_2}{2}  \quad \qquad \text{ coalescence of type 2}^{obs} \\[6pt]
(a_2-1,\ell_2+1) & \textrm{at rate} \quad \frac{p}{1-p}\,a_2\,\frac{\theta_2}{2} \qquad  \text{ mutation of 2}^{obs}\text{ to 1}\\[6pt]
\end{dcases}
\end{align}
\end{linenomath*}
\end{itemize}
\end{prop}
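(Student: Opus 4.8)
The plan is to obtain Proposition~\ref{prop:BothType} as a \emph{projection} of the larger Markov process supplied by Lemma~\ref{L:Diff_Moran_Gen}: keep the coordinates $(p_t,n^{obs}_1,L_1,n^{obs}_2,L_2)$ and drop the total counts $n_1(t),n_2(t)$, written $m_1,m_2$ in that lemma. The structural fact I would establish first, and on which everything rests, is that $m_1$ and $m_2$ never feed back into the dynamics of the retained coordinates.

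To see this I would simply read off the eight transition types in Lemma~\ref{L:Diff_Moran_Gen}. For type~$1$, the ``coalescence of two type $1^{obs}$'' event lowers $a_1$ at rate $\frac{1}{p}\binom{a_1}{2}$, and the ``mutation of $1^{obs}$'' event lowers $a_1$ and raises $\ell_1$ at rate $\frac{1-p}{p}\,a_1\,\frac{\theta_1}{2}$; neither rate involves $m_1$. The two remaining type-$1$ events, ``other coalescence of type~$1$'' and ``mutation of other type~$1$'', change only $m_1$ (and $m_2$) and leave $(a_1,\ell_1,a_2,\ell_2)$ untouched, so although their rates depend on $m_1$ they are invisible to the retained coordinates. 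The identical statement holds for type~$2$ after interchanging $p\leftrightarrow 1-p$ and the indices. Thus every transition that actually moves a retained coordinate fires at a rate depending only on $p$ and on the retained coordinates themselves.

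Given this, the Markov property of the reduced process follows by conditioning on the whole background path $(p_t)_{t\in\R_+}$, which by part~(i) evolves autonomously as the diffusion \eqref{eq:sde}. Conditionally on this path, Lemma~\ref{L:Diff_Moran_Gen}(ii) makes the six counts a time-inhomogeneous Markov chain whose rates are read off above; within it the four retained counts form an \emph{autonomous} sub-chain, since their rates do not see $(m_1,m_2)$ and the $(m_1,m_2)$-moves do not change them. Averaging over the law of $(p_t)$ --- itself Markov and autonomous --- then yields that $(p_t,n^{obs}_1,L_1,n^{obs}_2,L_2)_{t\in\R_+}$ is jointly Markov with the rates \eqref{E:Evo_type1}--\eqref{E:Evo_type2}. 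The initial law under $\P_{\bf n}$ is the one already recorded: by Bayes' theorem with prior $\phi_\beta$, the background starts at $p_0\sim\phi^{(n_1,n_2)}_\beta$ and the counts at $(n_1,0,n_2,0)$; since each decrement of the non-increasing $n^{obs}_i$ (starting from $n_i$) is either a coalescence, leaving $L_i$ fixed, or a mutation, raising $L_i$ by one, we have $L_i\le n_i$, fixing the state space as $[0,1]\times\{0,\dots,n_1\}^2\times\{0,\dots,n_2\}^2$. Finally, ``independently'' in part~(ii) is conditional independence given $(p_t)$: the type-$1$ and type-$2$ rates involve disjoint count coordinates and no event changes both, so, once the common background is fixed, the two pairs run on independent clocks.

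The one genuine subtlety, and the step I would write out most carefully, is the projection itself. Marginalizing a Markov process over some of its coordinates does \emph{not} in general give a Markov process; it does so here only because the coupling is one-directional --- the retained coordinates drive the discarded $(m_1,m_2)$ but are never driven by them. I would therefore emphasize this autonomy (equivalently, that the full generator maps functions of the retained variables to functions of the retained variables, the two ``other'' rates contributing nothing because they move no retained coordinate) rather than invoke marginalization in the abstract, so the reader sees precisely why the Markov property is inherited and why the rates collapse to \eqref{E:Evo_type1} and \eqref{E:Evo_type2}.
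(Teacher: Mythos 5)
Your proposal is correct and takes essentially the same route as the paper: the paper obtains Proposition~\ref{prop:BothType} directly as a projection of the limiting Markov process in Lemma~\ref{L:Diff_Moran_Gen} (with the conditional initial law $p_0\sim\phi^{(n_1,n_2)}_{\beta}$ and counts $(n_1,0,n_2,0)$ supplied by Bayes' theorem), asserting that the simplified description follows ``immediately'' because the discarded totals $m_1,m_2$ never enter the rates of transitions that move the retained coordinates. Your write-up simply makes explicit the one-directional-coupling argument --- that the full generator maps functions of $(p,a_1,\ell_1,a_2,\ell_2)$ to functions of the same variables --- which the paper leaves implicit.
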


\medskip

The total rate in \eqref{E:Evo_type1}, at which $a_1$ decreases by 1, is 
\begin{linenomath*}
\begin{equation}\label{taurate}
\frac{a_1}{2p}\big((1-p)\theta_1+a_1-1\big)\, \eqcol \,\lambda_{a_1}(p),
\end{equation}
\end{linenomath*}
and the one-step transition probabilities are
\begin{linenomath*}
\begin{align} \label{tauprob}
(a_1,\ell_1) &\to 
\begin{dcases}
(a_1-1,\ell_1) & \textrm{with probability} \quad \frac{a_1-1}{(1-p)\theta_1+a_1-1}  \\[6pt]
(a_1-1,\ell_1+1) & \textrm{with probability} \quad \frac{(1-p)\theta_1}{(1-p)\theta_1+a_1-1} \eqcol h_{a_1}(p)  \\[6pt]
\end{dcases}
\end{align}
\end{linenomath*}

As $t$ increases from 0 to $\infty$, the process $(n^{obs}_1(t))_{t\in\R_+}$ decreases from $n_1$ to 0, and the process $(L_1(t))_{t\in\R_+}$ increases from 0 to a random number
$K_1\coleq\lim_{t\to\infty}L_1(t)\in\mathbb{N}$ which  is the total number of latent mutations for type 1. Similarly, the total number of latent mutations for type 2 is defined by $K_2\coleq\lim_{t\to\infty}L_2(t)\in\mathbb{N}$.

We now give a more explicit description of $(K_1,K_2)$ using the frequency process $p$ and independent Bernoulli random variables.  Note these are conditional on the sample counts $({\mathcal N}_1=n_1,{\mathcal N}_2=n_2)$ as in \eqref{eq:K1sum}.

Let $\tau_1<\tau_2<\cdots< \tau_{n_1}$ be the jump times of the process $n^{obs}_1$. At time $\tau_1$, the process $n^{obs}_1$ decreases from $n_1$ to $n_1-1$, etc., until finally at $\tau_{n_1}$, $n^{obs}_1$ decreases from $1$ to $0$. It can be checked that $\tau_{n_1}<\infty$ almost surely under $\P_{\bf n}$ using the ergodicity of the process $p$ and \eqref{taurate}. Thus $(n^{obs}_1(t))_{t\in\R_+}$ will indeed decrease to 0 eventually under $\P_{\bf n}$.  
The allele frequencies at these random times are  $p_{\tau_1},\,p_{\tau_2},\ldots,\,p_{\tau_{n_1}}$.
By Proposition \ref{prop:BothType}, under $\P_{\bf n}$, we have 
\begin{linenomath*}
\begin{align}
K_1 \;\eqd & \; \xi_{n_1}(p_{\tau_1})+\xi_{n_1-1}(p_{\tau_2})+\xi_{n_1-2}(p_{\tau_3}) +\cdots+ \xi_{2}(p_{\tau_{n_1-1}}) +1, \label{Latent_type1a}
\end{align}
\end{linenomath*}
where $\{\xi_{k}(\cdot)\}_{k=2}^{n_1}$ is a family of independent random processes such that, for a constant $p\in[0,1]$,
\begin{linenomath*}
\begin{align} 
\xi_{k}(p) &=
\begin{dcases}
0 & \textrm{with probability} \quad \frac{k-1}{(1-p)\theta_1+k-1 }  \\[6pt]
1 & \textrm{with probability} \quad \frac{(1-p)\theta_1}{(1-p)\theta_1+k-1 }  
\end{dcases}\label{xi_K1}
\end{align}
\end{linenomath*}
which is a generalization of $\xi_k$ in \eqref{eq:pcoalj} and \eqref{eq:K1sum}, where $\xi_k \equiv \xi_{k}(0)$.

Similarly, if we let $s_1<s_2<\cdots< s_{n_2}$ be the jump times of the process $n^{obs}_2$, then 
\begin{linenomath*}
\begin{align}
K_2 \;\eqd & \; \zeta_{n_2}(p_{s_1})+\zeta_{n_2-1}(p_{s_2})+\zeta_{n_2-2}(p_{s_3}) +\cdots+ \zeta_{2}(p_{s_{n_2-1}}) +1 \label{Latent_type2a},
\end{align}
\end{linenomath*}
where $\{\zeta_{k}(\cdot)\}_{k=2}^{n_2}$ is a family of independent random processes such that, for a constant $p\in[0,1]$,
\begin{linenomath*}
\begin{align*} 
\zeta_{k}(p) &=
\begin{dcases}
0 & \textrm{with probability} \quad \frac{k-1}{p\theta_2+k-1 }  \\[6pt]
1 & \textrm{with probability} \quad \frac{p\theta_2}{p\theta_2+k-1 } 
\end{dcases}
\end{align*}
\end{linenomath*}
Analogous to $\xi_k$ in reference to $K_1$, in what follows we will use the notation
\begin{linenomath*}
\begin{equation}
\zeta_k \equiv \zeta_{k}(1) \label{eq:zetakdef}
\end{equation}
\end{linenomath*} 
in reference to $K_2$.

Having described the ancestral process in Proposition \ref{prop:BothType} and the latent mutations in \eqref{Latent_type1a} and \eqref{Latent_type2a} under the conditional probability $\P_{\bf n}$, we study their asymptotic properties under  3 scenarios in the next 3 subsections.
These 3 scenarios are a consequence of the asymptotic behaviors of the initial frequency $p_0$ described in Lemma \ref{L:Asymp_Initial}.
\begin{lemma}[Asymptotic initial frequency]\label{L:Asymp_Initial}
Let $n_1\in\mathbb{N}$ be fixed. The initial frequency $p_0$ converges in probability under  $\P_{\bf n}$ to a deterministic constant as follows.
\begin{itemize}
\item[(i)] Suppose $n_2\in \mathbb{N}$ is fixed and $\alpha\to\infty$. Then  $p_0\to 1$.
\item[(ii)] Suppose $\alpha\in \R$ is fixed and $n_2\to\infty$. Then  $p_0\to 0$.
\item[(iii)] Suppose $\alpha=\widetilde{\alpha}n_2+c$ where $\widetilde{\alpha},\,c\in\R$ are fixed and $n_2\to\infty$. Then 
\begin{linenomath*}
\begin{align}\label{Dirac_initial}
p_0 &\to
\begin{dcases}
0 & \textrm{when } \quad  \widetilde{\alpha}\in (-\infty,1] \\[6pt]
1-1/\widetilde{\alpha} & \textrm{when } \quad  \widetilde{\alpha}\in (1,\infty) 
\end{dcases}
\end{align}
\end{linenomath*}
Furthermore, when $\widetilde{\alpha}\in (-\infty,1)$,  
it holds that $n_2p_0$ converges in distribution to the Gamma random variable ${\rm Gam}(n_1+\theta_1,1-\widetilde{\alpha})$ with probability density function $\frac{ (1-\widetilde{\alpha})^{(n_1+\theta_1)}}{\Gamma(n_1+\theta_1)}\,y^{n_1+\theta_1 - 1} \,e^{(\widetilde{\alpha}-1) y}$.
\end{itemize}
\end{lemma}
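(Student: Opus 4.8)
The three convergence-in-probability statements are immediate from Lemma~\ref{L:Convphi}. Indeed, as noted after Lemma~\ref{L:Diff_Moran_Gen}, under $\P_{\bf n}$ the initial frequency $p_0$ has law $\phi^{(n_1,n_2)}_{\beta}(x)\,dx$. Lemma~\ref{L:Convphi} identifies the weak limit of this law as a Dirac mass $\delta_c$ at the constant $c$ appearing in \eqref{Dirac_initial} (with $c=1$ in case (i) and $c=0$ in case (ii)); and weak convergence of a sequence of laws to a point mass $\delta_c$ is equivalent to convergence in probability to the constant $c$. So parts (i), (ii) and the first display of (iii) require nothing beyond Lemma~\ref{L:Convphi}.

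The substance is the refined Gamma limit when $\tilde{\beta}\in(-\infty,1)$. The plan is to pass to the rescaled variable $Y\coleq n_2 p_0$ and control its density directly. Writing $a\coleq\theta_1+n_1$ and recalling from \eqref{eq:condphi2_simplified} that $\phi^{(n_1,n_2)}_{\beta}(x)\propto x^{a-1}(1-x)^{\theta_2+n_2-1}e^{\tilde{\beta}n_2 x}$, the change of variables $x=y/n_2$ shows that $Y$ has density $g_{n_2}(y)=c_{n_2}\,y^{a-1}(1-y/n_2)^{\theta_2+n_2-1}e^{\tilde{\beta}y}$ on $[0,n_2]$, where $c_{n_2}$ is fixed by $\int_0^{n_2}g_{n_2}=1$. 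For each fixed $y>0$, using $(1-y/n_2)^{n_2}\to e^{-y}$ and $(1-y/n_2)^{\theta_2-1}\to 1$, the unnormalized kernel converges to $y^{a-1}e^{-(1-\tilde{\beta})y}$, which is exactly the kernel of the ${\rm Gam}(a,1-\tilde{\beta})$ density and is integrable because $1-\tilde{\beta}>0$. It therefore remains to show the normalizing constants converge, i.e.\ that $\int_0^{n_2}y^{a-1}(1-y/n_2)^{\theta_2+n_2-1}e^{\tilde{\beta}y}\,dy\to\int_0^\infty y^{a-1}e^{-(1-\tilde{\beta})y}\,dy=\Gamma(a)/(1-\tilde{\beta})^{a}$; Scheff\'e's lemma then upgrades pointwise density convergence to convergence in total variation, hence in distribution, and identifies the constant, giving the limiting density $\frac{(1-\tilde{\beta})^{a}}{\Gamma(a)}\,y^{a-1}e^{-(1-\tilde{\beta})y}$ as claimed.

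The hard part is controlling this integral uniformly in $n_2$ near the right endpoint $y=n_2$ when $\theta_2<1$, since there the factor $(1-y/n_2)^{\theta_2-1}$ blows up. I would split the domain at $n_2/2$. On $[0,n_2/2]$ the bounds $(1-y/n_2)^{n_2}\le e^{-y}$ and $(1-y/n_2)^{\theta_2-1}\le 2^{|1-\theta_2|}$ give the $n_2$-independent integrable dominating function $2^{|1-\theta_2|}\,y^{a-1}e^{-(1-\tilde{\beta})y}$, so dominated convergence yields $\int_0^{n_2/2}\to\int_0^\infty$. On $[n_2/2,n_2]$ I would revert to $u=y/n_2$ and recognize the remaining exponential factor as $e^{n_2 S(u)}$ with $S(u)=\tilde{\beta}u+\ln(1-u)$ as in \eqref{S_asymp}; since $S$ is strictly decreasing with $S(0)=0$ for $\tilde{\beta}<1$, one has $S(u)\le S(1/2)<0$ on this range, so the piece is bounded by $n_2^{a}\,e^{n_2 S(1/2)}\int_{1/2}^1 u^{a-1}(1-u)^{\theta_2-1}\,du$, which vanishes exponentially because $\theta_2>0$ makes the integral finite. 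Combining the two pieces gives convergence of the normalizing constants and completes the proof.
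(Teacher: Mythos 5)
Your proposal is correct, and its overall skeleton matches the paper's: parts (i), (ii) and the Dirac limits in (iii) are dispatched exactly as the paper does, by citing Lemma~\ref{L:Convphi} together with the equivalence of weak convergence to a point mass and convergence in probability, and the Gamma limit is proved by showing pointwise convergence of the density of the rescaled variable. Where you genuinely diverge is in how the normalizing constant is handled. The paper writes the density of $np_0$ with its explicit normalization $\bigl[{\rm Beta}(a,n_2+\theta_2)\,{}_1F_1(a;n+\theta_1+\theta_2;\tilde{\beta}n_2)\bigr]^{-1}$ and then substitutes the appendix asymptotics \eqref{eq:1F1ws1} and \eqref{eq:gammaratio}, chaining together ``$\approx$'' steps; the passage from pointwise density convergence to convergence in distribution is left implicit. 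You instead keep the normalization abstract, prove convergence of the normalizing integral $\int_0^{n_2} y^{a-1}(1-y/n_2)^{\theta_2+n_2-1}e^{\tilde{\beta}y}\,dy \to \Gamma(a)/(1-\tilde{\beta})^a$ directly via the split at $n_2/2$ (dominated convergence with the bound $(1-y/n_2)^{n_2}\le e^{-y}$ on the left piece, a Laplace-type bound $e^{n_2S(u)}\le e^{n_2S(1/2)}$ with $S(1/2)<0$ on the right piece, correctly handling the $(1-u)^{\theta_2-1}$ blow-up when $\theta_2<1$), and then invoke Scheff\'e's lemma explicitly. In effect you re-derive, in a self-contained and rigorous way, precisely the instance of \eqref{eq:1F1ws1} that the paper cites, and you also close the Scheff\'e gap the paper glosses over; the paper's route is shorter and reuses machinery needed elsewhere (the same asymptotics drive Section~\ref{sec:freqs}), while yours is more elementary and makes the uniform-integrability issues explicit. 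A minor point in your favor: you rescale by $n_2$, matching the statement, whereas the paper's computation rescales by $n$ (harmless, since $n/n_2\to 1$, but yours is the cleaner bookkeeping).
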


The proof of Lemma \ref{L:Asymp_Initial} is given in \ref{sec:lemmaproofs}.

\subsection{Scenario (i): strong selection, arbitrary sample size} \label{sec:bessub1}

The first  scenario is when $\lvert\alpha\rvert$ large with $n_2$ fixed. We consider the case $\alpha\to+\infty$ only, since the other case $\alpha\to-\infty$ follows by switching the roles of type 1 and type 2. 

The conditional genealogy of the $n_1+n_2$ sampled individuals, under  $\P_{\bf n}$,  has three parts with different timescales. 
First, the $n_2$ type-2 lineages quickly evolve (coalesce and mutate) as in the  Ewens sampling formula, producing $K_2$ type 1 lineages at a short time $s_{n_2}$. Thus,
$K_2\eqd \sum_{k=1}^{n_2}\zeta_k$ where $\{\zeta_k\}$ are independent Bernoulli variables taking values in $\{0,1\}$ and having means $\frac{\theta_2}{\theta_2+k-1}$.
Next, the resulting  $n_1+K_2$ type 1 lineages will coalesce according to the Kingman coalescent without mutation until only one lineage remains. Hence it takes $O(1)$ amount of time for the number of lineages of type 1 to decrease to 1, as $\alpha\to\infty$.  Finally, it takes a long time,  $\tau_{n_1} \approx \frac{2\,\alpha}{\theta_1\theta_2}$, for the single lineage to mutate.
In particular, $K_1\approx 1$.

This description is justified by Theorems \ref{T:scenario1} and \ref{T:Age1}. See Figure \ref{fig:scenario1} for an illustration.

\FloatBarrier
\begin{figure}[h!]
    \centering
    \includegraphics[scale=0.6]{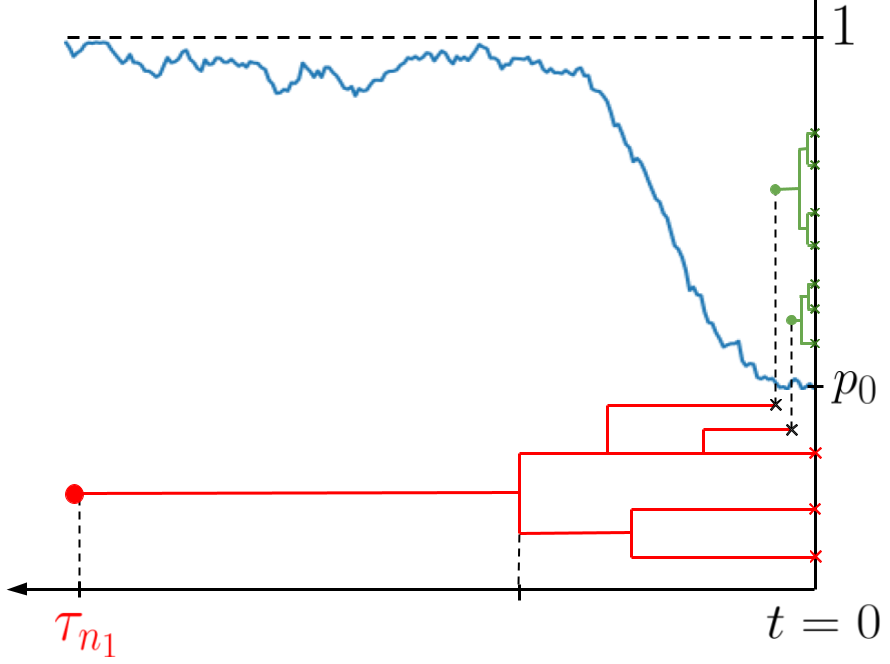}
    \caption{Conditional genealogy of a sample with  $(n_1,n_2)=(3,7)$ at the present time $t=0$. The fluctuating blue curve shows the process $(p_t)_{t\in\R_+}$ of the population frequency of type 1 backward in time. In this example, $p_t$ approaches $1$ from $p_0$ and the $7$ type-2 lineages coalesce and mutate, producing an additional $K_2=2$ type-1 lineages. The $5=3+2$ type-1 lineages then coalesce without mutating, reaching their common ancestral lineage at time $\tau_{n_1-1}$ and finally mutating at time $\tau_{n_1}$.  Under scenario (i), that is when $\alpha$ is large: $p_0$ will already be close to $1$, coalescence and mutation among the type-2 lineages will occur quickly, with $K_2$ according to the Ewens sampling formula, coalescence among the type-1 lineages will follow the Kingman coalescent, and $\tau_{n_1}\approx 2\alpha/{\left(\theta_1\theta_2\right)}$. }
    \label{fig:scenario1}
\end{figure}
\FloatBarrier

\begin{thm}\label{T:scenario1}
Suppose $(n_1,n_2)\in\mathbb{N}^2$
 is fixed. Then  under $\P_{\bf n}$, as $\alpha\to\infty$,
\begin{itemize}
\item[(i)] $\sup_{t\in[0,T]} |1-p_t| \to 0$
in probability, for  any $T\in(0,\infty)$; and
\item[(ii)] the triplet
$(K_1,\,K_2,\,s_{n_2})$ converges in distribution to $\left(1,\,\sum_{k=1}^{n_2}\zeta_k,\,0\right)$, where $\{\zeta_k\}$ are independent Bernoulli variables taking values in $\{0,1\}$ and having means $\frac{\theta_2}{\theta_2+k-1}$.
\item[(iii)] $\limsup_{\alpha\to\infty}\tau_{n_1-1}$ is stochastically dominated by the height of the Kingman coalescent with $n_1+n_2$ leaves.
\end{itemize}
\end{thm}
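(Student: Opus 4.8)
I would prove part (i) first---the concentration of the entire frequency path near $1$---and then read off part (ii) from the representations \eqref{Latent_type1a} and \eqref{Latent_type2a}, using only elementary upper and lower bounds on the event rates in Proposition \ref{prop:BothType}. The key structural point is that the two parts are complementary: near $p=1$ the type-$1$ rates are $O(1)$ (Kingman) while the type-$2$ rates blow up like $1/(1-p)$, and part (i) is what lets me evaluate the conditional Bernoulli means in \eqref{Latent_type1a}--\eqref{Latent_type2a} at the limiting frequency $p=1$.

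\textbf{Part (i).} Set $Y_t\coleq 1-p_t$ and fix $\epsilon\in(0,1/2)$. Since $\{\sup_{t\in[0,T]}|1-p_t|>\epsilon\}\subseteq\{\tau_\epsilon\le T\}$ where $\tau_\epsilon\coleq\inf\{t:Y_t\ge\epsilon\}$, it suffices to show $\P_{\bf n}(\tau_\epsilon\le T)\to0$. Applying It\^o's formula for \eqref{eq:sde} to $x\mapsto 1-x$ gives, on $[0,\tau_\epsilon)$ where $p_s>1/2$, the comparison $dY_s=\bigl(-\tfrac{\theta_1}{2}Y_s+\tfrac{\theta_2}{2}p_s-\tfrac{\beta}{2}p_s(1-p_s)\bigr)\,ds+dM_s\le\bigl(\tfrac{\theta_2}{2}-\tfrac{\beta}{4}Y_s\bigr)\,ds+dM_s$, using $p_s(1-p_s)\ge\tfrac12 Y_s$, where $M$ is the martingale with $d\langle M\rangle_s=p_s(1-p_s)\,ds$. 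Multiplying by $e^{\beta s/4}$ and stopping at $\tau_\epsilon$ yields $Y_{t\wedge\tau_\epsilon}\le Y_0+\tfrac{2\theta_2}{\beta}+N_{t\wedge\tau_\epsilon}$, where $N_u$ solves the Ornstein--Uhlenbeck-type equation $dN_u=-\tfrac{\beta}{4}N_u\,du+dM^{\tau_\epsilon}_u$ with $N_0=0$. Taking expectations in the comparison gives the first-moment bound $\E_{\bf n}[Y_{s\wedge\tau_\epsilon}]\le\E_{\bf n}[Y_0]+2\theta_2/\beta\eqcol\rho_\beta$, hence $\E_{\bf n}\!\int_0^{T}p_s(1-p_s)\mathbf 1_{s<\tau_\epsilon}\,ds\le T\rho_\beta$. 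Feeding this into It\^o's formula for $N^2$, the Burkholder--Davis--Gundy inequality, and Cauchy--Schwarz, then closing the resulting quadratic inequality, gives $\E_{\bf n}[\sup_{t\le T}N_{t\wedge\tau_\epsilon}^2]=O(\rho_\beta)$. Since $\E_{\bf n}[Y_0]\to0$ by Lemma \ref{L:Convphi}(i) (equivalently Lemma \ref{L:Asymp_Initial}(i)) and $2\theta_2/\beta\to0$, we have $\rho_\beta\to0$, and on $\{\tau_\epsilon\le T\}$ the identity $Y_{\tau_\epsilon}=\epsilon$ forces $\P_{\bf n}(\tau_\epsilon\le T)\le\P_{\bf n}\bigl(Y_0+\tfrac{2\theta_2}{\beta}+\sup_{t\le T}|N_{t\wedge\tau_\epsilon}|\ge\epsilon\bigr)\to0$.

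\textbf{Part (ii), the count $K_1$.} By \eqref{taurate} the total rate at which $n^{obs}_1$ decreases from level $a_1$ is $\lambda_{a_1}(p)\ge\binom{a_1}{2}$ for every $p\in(0,1]$, so the $(n_1-1)$-st jump time $\tau_{n_1-1}$ is stochastically dominated, uniformly in $\beta$, by the time a Kingman coalescent needs to go from $n_1$ to $1$ lineages. Thus, given $\eta>0$, I may fix $T$ with $\P_{\bf n}(\tau_{n_1-1}>T)<\eta$; on $\{\tau_{n_1-1}\le T\}$ part (i) gives $\max_{j\le n_1-1}(1-p_{\tau_j})\le\sup_{t\in[0,T]}(1-p_t)\to0$ in probability. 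Since the one-step mutation probability $h_{a_1}(p)$ in \eqref{tauprob} satisfies $h_{a_1}(p_{\tau_j})\le\theta_1(1-p_{\tau_j})$ for $a_1\ge2$, the conditionally independent indicators in \eqref{Latent_type1a} obey $\E_{\bf n}\bigl[\sum_{j=1}^{n_1-1}\xi_{n_1-j+1}(p_{\tau_j})\bigr]\le\theta_1(n_1-1)\,\E_{\bf n}[\max_{j\le n_1-1}(1-p_{\tau_j})]\to0$, so $K_1=1+\sum_{j}\xi_{n_1-j+1}(p_{\tau_j})\to1$ in probability.

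\textbf{Part (ii), the time $s_{n_2}$ and the count $K_2$.} By \eqref{E:Evo_type2} the total rate at which $n^{obs}_2$ decreases from level $a_2$ is $\tfrac{a_2}{2(1-p)}\bigl((a_2-1)+p\theta_2\bigr)\ge c/(1-p)$ for $p\ge1/2$, which blows up as $p\to1$. Since part (i) makes $\sup_{t\in[0,T]}(1-p_t)$ arbitrarily small with high probability, each of the finitely many $n_2$ holding times of $n^{obs}_2$ is stochastically dominated by an exponential with mean $O(\sup_{[0,T]}(1-p_t))$, whence $s_{n_2}\to0$ in probability; the apparent circularity (needing $p\approx1$ on $[0,s_{n_2}]$) is broken by splitting on $\{\sup_{[0,\delta]}(1-p_t)\le\gamma\}$ and letting $\gamma\to0$. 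Consequently $s_j\le s_{n_2}\to0$ for every $j$, so $p_{s_j}\to1$ and the Bernoulli parameters $\tfrac{p_{s_j}\theta_2}{p_{s_j}\theta_2+k-1}$ of the $\zeta_k(p_{s_j})$ in \eqref{Latent_type2a} converge in probability to $\tfrac{\theta_2}{\theta_2+k-1}$; a bounded-convergence argument on this finite conditionally independent sum then gives $K_2\Rightarrow\sum_{k=1}^{n_2}\eta_k$ with $\eta_k\sim\mathrm{Bernoulli}(\theta_2/(\theta_2+k-1))$. Finally, because the limits $K_1\to1$ and $s_{n_2}\to0$ are deterministic, Slutsky's theorem upgrades the three marginal convergences to the stated joint convergence of $(K_1,K_2,s_{n_2})$. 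The main obstacle is part (i)---controlling the diffusive fluctuations of $p_t$ uniformly in $t$ against the strong drift, which forces the maximal (BDG) estimate above; the secondary difficulty is the circularity between ``$p\approx1$'' and ``the jump times are controlled,'' resolved by the $\beta$-uniform rate bounds $\lambda_{a_1}(p)\ge\binom{a_1}{2}$ and the type-$2$ blow-up $\gtrsim 1/(1-p)$.
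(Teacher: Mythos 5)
Your part (ii) is essentially the paper's argument, fleshed out. The paper's own proof of (ii) is two sentences: as $p\to1$ the type-1 rates in \eqref{E:Evo_type1} tend to $\binom{a_1}{2}$ and $0$, the type-2 transition probabilities in \eqref{E:Evo_type2} tend to \eqref{E:one-step type2}, and the conclusion "follows from (i) and the representations \eqref{Latent_type1a} and \eqref{Latent_type2a}." Your additions --- tightness of $\tau_{n_1-1}$ via the $\beta$-uniform bound $\lambda_{a_1}(p)\ge\binom{a_1}{2}$ from \eqref{taurate}, the splitting argument that breaks the circularity in showing $s_{n_2}\to0$, and Slutsky for the joint limit --- are exactly the details the paper leaves implicit, and they are correct. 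Note also that your limit $\sum_{k=1}^{n_2}\eta_k$ for $K_2$ is the right one; the "$\sum_{k=1}^{n_1}\eta_k$" in the paper's statement is a typo (since $\eta_1=1$ a.s., the leading "$+1$" in \eqref{Latent_type2a} is absorbed into the sum, which must run over the $n_2$ type-2 levels).

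Part (i) is where you genuinely diverge from the paper. The paper first simplifies the dynamics of $q=1-p$: it drops the favorable drift $-\tfrac{\theta_1}{2}q$ by the comparison principle, removes the constant drift $\tfrac{\theta_2}{2}$ by Girsanov, and then applies the Dambis--Dubins--Schwarz time change to write the remaining martingale as a Brownian motion run at speed $\langle M\rangle_t\le t/4$, so the exit probability is dominated by the explicitly computable quantity $\P\bigl(\sup_{r\le T/4}(B_r-\tfrac{\beta}{2}r)>\epsilon\bigr)\to0$. You instead keep the original process, stop it at $\tau_\epsilon$, and run a moment argument (integrating factor plus BDG plus Markov). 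Both strategies are viable: the paper's reductions buy a closed-form bound with no moment estimates at all; yours is more self-contained (no Girsanov, no time change) but requires careful bookkeeping at the stopping time.

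That bookkeeping contains your one real gap: the step "taking expectations in the comparison gives $\E_{\bf n}[Y_{s\wedge\tau_\epsilon}]\le\E_{\bf n}[Y_0]+2\theta_2/\beta$" is not justified as written. The process $N$ is a stochastic convolution, not a martingale, so optional stopping does not apply to it; indeed $\E_{\bf n}[N_{s\wedge\tau_\epsilon}]=-\tfrac{\beta}{4}\E_{\bf n}\int_0^{s\wedge\tau_\epsilon}N_u\,du$, which has no obvious sign. If you instead try Gronwall on $g(s)=\E_{\bf n}[Y_{s\wedge\tau_\epsilon}]$, the indicator mismatch produces a term of order $\beta\epsilon\,\P_{\bf n}(\tau_\epsilon\le s)$, which is circular, and the resulting "quadratic inequality" closes only for small $T$. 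Fortunately the quantity your BDG step actually needs is not a pointwise first moment but $\E_{\bf n}\int_0^{T\wedge\tau_\epsilon}Y_s\,ds$, and that follows directly: integrate the drift inequality, use $Y_{T\wedge\tau_\epsilon}\ge0$ and optional stopping of the true martingale $M$ (whose bracket is bounded by $T/4$) to get $\tfrac{\beta}{4}\E_{\bf n}\int_0^{T\wedge\tau_\epsilon}Y_s\,ds\le\E_{\bf n}[Y_0]+\tfrac{\theta_2}{2}T$, an $O(1/\beta)$ bound with no self-consistency step. Combined with $\sup_{u\le T}|N_u|\le2\sup_{u\le T\wedge\tau_\epsilon}|M_u|$ (integration by parts in the convolution) and Doob/BDG for $M^{\tau_\epsilon}$, this yields everything you asserted, so your proof is complete after this local repair.
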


\begin{proof}
By \eqref{eq:sde}, the process $q \coleq 1-p$ solves the  stochastic differential equation
\begin{linenomath*}
\begin{equation}\label{WFDiffusionq1}
dq_t= \sqrt{q_t(1-q_t)}\,dW_t - \frac{\theta_1}{2}q_t +\frac{\theta_2}{2} (1-q_t) -\frac{\alpha}{2} q_t(1-q_t),\quad t\geq 0.
\end{equation}
\end{linenomath*}

Fix any $\epsilon\in(0,1)$. 
We shall to show that $\P_{\bf n}(\sup_{t\in [0,T]}q_t >2\epsilon)\to 0$ as $\alpha\to\infty$.  

By the comparison principle \cite[Proposition 2.18 in Chap. 5]{karatzas1991brownian}, 
we can replace the process $q$ by another process $\widehat{q}$ that solves
\begin{linenomath*}
\begin{align}
d\widehat{q}_t=&\,  \sqrt{\widehat{q}_t(1-\widehat{q}_t)}\,dW_t  + \left[\frac{\theta_2}{2}  -\frac{\alpha}{2} \widehat{q}_t(1-\widehat{q}_t)\right]\,dt
\end{align}
\end{linenomath*}
with an initial condition $\widehat{q}_0$ that is equal in distribution to $q_0$.
By Girsanov's theorem, we can further take away the constant drift $\frac{\theta_2}{2}dt$. That is, it suffices to show that there exists a probability space $(\Omega, \mathcal{F} ,\P)$ on which 
$\P(\sup_{t\in [0,T]}\widehat{Q}_t >2\epsilon)\to 0$ as $\alpha\to\infty$, where 
the  process $\widehat{Q}$ solves
\begin{linenomath*}
\begin{align}
d\widehat{Q}_t=&\,\sqrt{\widehat{Q}_t(1-\widehat{Q}_t)}\,dW_t   -\frac{\alpha}{2} \widehat{Q}_t(1-\widehat{Q}_t)\,dt
\end{align}
\end{linenomath*}
with an initial condition $\widehat{Q}_0$ that is equal in distribution to $q_0$. 
The initial frequency $q_0\to 0$ in probability under $\P_{\bf n}$, by Lemma \ref{L:Convphi}(i).
Hence it suffices to show that
\begin{linenomath*}
\begin{equation}
\P\left(\sup_{t\in [0,T]}\left\{ \int_0^t \sqrt{\widehat{Q}_s(1-\widehat{Q}_s)}\,dW_s   -\frac{\alpha}{2} \int_0^t \widehat{Q}_s(1-\widehat{Q}_s)\,ds \right\} >\epsilon\right)\,\to \,0 \quad\text{ as }\alpha\to \infty.
\end{equation}
\end{linenomath*}
This is true by the time-change representation of the martingale 
$M_t\coleq\int_0^t \sqrt{\widehat{Q}_s(1-\widehat{Q}_s)}\,dW_s$ \cite[Theorem 4.6 in Chap. 3]{karatzas1991brownian} and the fact that
\begin{linenomath*}
\begin{align*}
&\P\left(\sup_{t\in [0,T]}\left\{  B_{\langle M\rangle_t}  -\frac{\alpha}{2}\langle M\rangle_t\right\} >\epsilon\right)\\
=&\,\P\left(\sup_{r\in [0,\,\langle M\rangle_T]}\left\{  B_{r}  -\frac{\alpha}{2}r\right\} >\epsilon\right)\\
\leq &\,\P\left(\sup_{r\in [0,\,T/4]}\left\{  B_{r}  -\frac{\alpha}{2}r\right\} >\epsilon\right)\\
=&\,\int_0^{T/4}\frac{1}{\sqrt{2\pi t^3}}\exp{\left\{\frac{-(\epsilon+\frac{\alpha}{2}t)^2}{2t}\right\}}\,dt \to 0 \quad\text{as}\quad \alpha\to\infty,
\end{align*}
\end{linenomath*}
where in the inequality we used the fact that the quadratic variation $\langle M\rangle_T\leq T/4$ almost surely (since $q_t(1-q_t)\leq 1/4$ for all $t\in\R_+$).
Convergence (i) is proved.

Note that the coalescence rate and the mutation rate in \eqref{E:Evo_type1} converge to $a_1(a_1-1)/2$ and 0 respectively as $p\to 1$. In \eqref{E:Evo_type2} both rates converge to infinity but their  ratio converges in such a way that the limiting one-step transition probabilities are
\begin{linenomath*}
\begin{align}\label{E:one-step type2}
(a_2,\ell_2) &\to 
\begin{cases}
(a_2-1,\ell_2) & \textrm{with probability} \quad \frac{a_2-1}{\theta_2+a_2-1}  \\[6pt]
(a_2-1,\ell_2+1) & \textrm{with probability} \quad \frac{\theta_2}{\theta_2+a_2-1}  \\[6pt]
\end{cases}
\end{align}
\end{linenomath*}

Convergence (ii) then follows from (i) and the representations \eqref{Latent_type1a} and \eqref{Latent_type2a}. 

Finally, for part (iii), note that the convergence $s_{n_2}\to 0$ in part (ii) says that the time for type-2 lineages to disappear is negligible. 
Therefore, it follows from part (i) and \eqref{E:Evo_type1} that
the conditional distribution of $\tau'_{n_1+K_2-1}$, given  $K_2$, converges weakly to the height of the Kingman coalescent with $n_1+K_2$ leaves, where 
$\tau'_{n_1+K_2-1}:=\inf\{t\in\R_+:\,n_1(t)=1\}$ is the first time when the process $n_1$ decreases to 1. Part (iii) then follows
Since $\tau_{n_1-1}\leq \tau'_{n_1+K_2-1}$ and $K_2\leq n_2$ by definition.
\end{proof}

In Theorem \ref{T:Age1} below,
we obtain that the mean of the age $\tau_{n_1}$ is about $\frac{2\,\alpha}{\theta_1\theta_2}$.

\begin{thm}[Age of the oldest latent mutation of a favorable allele]\label{T:Age1}
Suppose $(n_1,n_2)\in\mathbb{N}^2$ is fixed. Then  under $\P_{\bf n}$, as $\alpha\to\infty$, $\frac{\tau_{n_1}}{\alpha}$  converges in distribution to an exponential random variable with mean $\frac{2}{\theta_1\theta_2}$.  That is,
\begin{linenomath*}
\begin{equation*}
\frac{\tau_{n_1}}{\alpha} \,\toL \,{\rm Exp}\left(\frac{\theta_1\,\theta_2}{2}\right)\qquad \text{as}\quad \alpha\to\infty.
\end{equation*}
\end{linenomath*}
\end{thm}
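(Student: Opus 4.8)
The plan is to isolate the single slow event---the final mutation of the last surviving type-$1$ observed lineage---and show that, after dividing by $\beta$, its timing is driven by an additive functional of the frequency process that concentrates on a deterministic linear clock. First I would dispose of the fast part of the genealogy. By Proposition \ref{prop:BothType} the jump times $\tau_1<\cdots<\tau_{n_1}$ of $n^{obs}_1$ occur at total rates $\lambda_{a_1}(p_t)$ from \eqref{taurate}, and for $a_1=k\ge 2$ one has $\lambda_k(p)=\frac{k}{2p}\big((1-p)\theta_1+k-1\big)\ge\binom{k}{2}\ge 1$ uniformly in $p\in(0,1]$ and in $\beta$. Hence the time $\tau_{n_1-1}$ to pass from $a_1=n_1$ down to $a_1=1$ is $O_{\P_{\bf n}}(1)$, so $\tau_{n_1-1}/\beta\to 0$ in probability and $\tau_{n_1}/\beta$ has the same limit as $(\tau_{n_1}-\tau_{n_1-1})/\beta$. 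When $a_1=1$ the coalescence rate $\frac1p\binom12$ vanishes, so the last transition is necessarily a mutation, at rate $\lambda_1(p_t)=\frac{(1-p_t)\theta_1}{2p_t}$. Conditionally on the (autonomous, by Proposition \ref{prop:BothType}(i)) frequency path $(p_t)$, this event is the first point of an inhomogeneous Poisson process, so with an independent $E\sim{\rm Exp}(1)$,
\[
\tau_{n_1}=\inf\Big\{t\ge\tau_{n_1-1}:\ A(t)\ge E\Big\},\qquad A(t)\coleq\int_0^t\frac{(1-p_s)\theta_1}{2p_s}\,ds,
\]
up to the contribution of $[0,\tau_{n_1-1}]$ to $A$, which is $O_{\P_{\bf n}}(1/\beta)$ since $\tau_{n_1-1}=O_{\P_{\bf n}}(1)$ and $1-p_s=O_{\P_{\bf n}}(1/\beta)$. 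Thus $\tau_{n_1}/\beta$ is the generalized inverse of $u\mapsto A(u\beta)$ evaluated at $E$.

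The main step is to prove that $A(u\beta)\to\frac{\theta_1\theta_2}{2}\,u$ in $\P_{\bf n}$-probability for each fixed $u>0$. Writing $q_s=1-p_s$ and rescaling time by $\beta$ as in Lemma \ref{L:limitq_0}, where $Y_r=\beta q_{r/\beta}$ converges in distribution to the ergodic CIR diffusion $Q$ of \eqref{limitZ_q_0}, the change of variables $s=r/\beta$ gives
\[
A(u\beta)=\frac{\theta_1}{2\beta^2}\int_0^{u\beta^2}\frac{Y_r}{p_{r/\beta}}\,dr,
\]
and since $p_t\to 1$ by Theorem \ref{T:scenario1}(i) it suffices to treat $\frac{1}{\beta^2}\int_0^{u\beta^2}Y_r\,dr$. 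Heuristically this is a time average of $Y$ over the window $[0,u\beta^2]$, which by ergodicity of $Q$ should converge to the stationary mean of its ${\rm Gam}(\theta_2,1)$ law, namely $\theta_2$, producing the constant $\frac{\theta_1}{2}\cdot u\theta_2=\frac{\theta_1\theta_2}{2}u$.

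The hard part will be that the averaging window $u\beta^2$ grows with $\beta$, so the finite-horizon convergence $Y_r\to Q$ of Lemma \ref{L:limitq_0} cannot be invoked directly; the long-time average and the limit $\beta\to\infty$ must be controlled simultaneously. I would instead argue by first and second moments applied directly to $\int_0^{u\beta}q_s\,ds$. Taking expectations in \eqref{WFDiffusionq1} and using $q_s=O(1/\beta)$ yields the linear relation $\frac{d}{dt}\E[q_t]\approx\frac{\theta_2}{2}-\frac{\beta}{2}\E[q_t]$, whose solution relaxes on the $1/\beta$ time scale to the equilibrium value $\E[q_t]\approx\theta_2/\beta$; integrating over $[0,u\beta]$ and noting the transient region $s\lesssim 1/\beta$ contributes $O(1/\beta^2)$ gives $\E_{\P_{\bf n}}\big[\int_0^{u\beta}q_s\,ds\big]\to u\theta_2$. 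For the variance I would exploit the exponential decay of correlations of $Y$ inherited from the mean-reversion rate $\tfrac12$ of $Q$ (equivalently rate $\beta/2$ in original time) to bound ${\rm Cov}(q_s,q_{s'})\lesssim\beta^{-2}e^{-\beta|s-s'|/2}$; integrating this kernel over $[0,u\beta]^2$ gives a variance of order $\beta^{-2}\to 0$, so the convergence holds in $L^2(\P_{\bf n})$. Establishing these two estimates uniformly in $\beta$---above all the quantitative correlation decay, for which the known results on the integrated CIR process \citep{dufresne2001integrated} are the natural tool---is where the real work lies.

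Finally I would conclude. Since $u\mapsto A(u\beta)$ is nondecreasing and converges in probability to the continuous, strictly increasing, deterministic limit $\frac{\theta_1\theta_2}{2}u$, its generalized inverse converges, so $\tau_{n_1}/\beta\to\frac{2E}{\theta_1\theta_2}$ in probability; equivalently, conditioning on the path and using $0\le e^{-A}\le 1$ with dominated convergence,
\[
\P_{\bf n}\!\left(\frac{\tau_{n_1}}{\beta}>u\right)=\E_{\P_{\bf n}}\big[e^{-A(u\beta)}\big]\to e^{-\theta_1\theta_2 u/2}.
\]
As $\frac{2E}{\theta_1\theta_2}\sim{\rm Exp}(\theta_1\theta_2/2)$, whose survival function is exactly $e^{-\theta_1\theta_2 u/2}$, this yields $\tau_{n_1}/\beta\toL{\rm Exp}(\theta_1\theta_2/2)$, as claimed.
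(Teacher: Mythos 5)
Your proposal is correct and its skeleton coincides with the paper's proof: both reduce the statement to the Laplace-functional identity $\P_{\bf n}(\tau_{n_1}/\beta>t)\approx\E\bigl[e^{-\frac{\theta_1}{2}\int_0^{\beta t}\frac{1-p_s}{p_s}\,ds}\bigr]$, rescale time by $\beta$ so that $Y_r=\beta q_{r/\beta}$ is governed by the CIR diffusion $Q$ of \eqref{limitZ_q_0}, and identify the limiting exponent through the stationary mean $\E_\infty[Q]=\theta_2$ of the ${\rm Gam}(\theta_2,1)$ law, yielding the survival function $e^{-\theta_1\theta_2 t/2}$. Where you genuinely diverge is in how the key averaging step is justified. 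The paper passes from $\frac{1}{\beta^2 t}\int_0^{\beta^2 t}\beta q_{r/\beta}/(1-q_{r/\beta})\,dr$ to $\lim_{T\to\infty}\frac1T\int_0^T Q(r)\,dr$ by invoking Lemma \ref{L:limitq_0} together with ergodicity of $Q$; this implicitly interchanges the limits $\beta\to\infty$ and $T\to\infty$, since the finite-horizon weak convergence of that lemma does not by itself control a time window of length $\beta^2 t$ growing with $\beta$. You flag exactly this interchange as the crux and replace it by direct first- and second-moment estimates on $q$ itself: the mean relaxes to $\theta_2/\beta$ on the $1/\beta$ time scale via the drift ODE, and the covariance kernel $\lesssim\beta^{-2}e^{-\beta|s-s'|/2}$ integrates to a vanishing variance, giving $L^2$ convergence of the additive functional. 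This buys a self-contained, quantitative argument that closes the rigor gap in the paper's own proof (and your explicit treatment of the fast phase $\tau_{n_1-1}=O_{\P_{\bf n}}(1)$, which the paper compresses into an unexplained ``$\approx$'', is likewise cleaner), at the cost that the correlation-decay bound for the near-CIR process is asserted rather than proved; carrying it out (e.g.\ via the known moments of the integrated CIR process) is indeed where the remaining work lies, but it is a standard computation and the plan is sound.
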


\begin{proof}
By part (iii) of Theorem \ref{T:scenario1}, 
it takes $O(1)$ amount of time for the number of lineages of type 1 to decrease to 1, as $\alpha\to\infty$. It remains to consider the time $\tau_{n_1}-\tau_{n_1-1}$ for this single lineage to mutate.
Recall the rate of mutation in \eqref{E:Evo_type1} with $a_1=1$ lineage,
for any $t\in\R_+$, 
\begin{linenomath*}
\begin{align}
\P_{\bf n}\left(\frac{\tau_{n_1}}{\alpha}>t\right) 
\approx&\, \E\left[e^{\frac{-\theta_1}{2}\int_0^{\alpha t} \frac{1-p_s}{p_s}\,ds}\right] \qquad \text{as }\alpha\to\infty. \label{eq:approxeg}
\end{align}
\end{linenomath*}
The exponent inside the expectation is, by the ergodic theorem and using the stationary probability density \eqref{eq:phix} and \eqref{eq:C},
\begin{linenomath*}
\begin{align*}
\frac{-\theta_1\alpha t}{2} \frac{1}{\alpha t}\int_0^{\alpha t} \frac{1-p_s}{p_s}\,ds \approx&\, \frac{-\theta_1\alpha t}{2} \int_0^1\frac{1-x}{x}\,\phi_\alpha(x)\,dx \qquad \text{almost surely, as }\alpha\to\infty\\
=&\,\frac{-\theta_1\alpha t}{2} \int_0^1\, C\, x^{\theta_1 - 2} (1-x)^{\theta_2} e^{\alpha x}\,dx\\
=&\,\frac{-\theta_1\alpha t}{2} \,\frac{\Gamma(\theta_1-1) \Gamma(\theta_2+1) {}_1F_1(\theta_1-1;\theta_1+\theta_2;\alpha)}{\Gamma(\theta_1) \Gamma(\theta_2) {}_1F_1(\theta_1;\theta_1+\theta_2;\alpha)}\\
\approx &\,\frac{-\theta_1\alpha t}{2} \,\frac{\theta_2}{\alpha} \qquad \text{as }\alpha\to\infty, \quad\text{by }\eqref{eq:1F1s2}\\
=&\, \frac{-\theta_1\theta_2 t}{2}.
\end{align*}
\end{linenomath*}
Hence, by \eqref{eq:approxeg}, $\lim_{\alpha\to\infty}\P_{\bf n}\left(\frac{\tau_{n_1}}{\alpha}>t\right) =e^{\frac{-\theta_1\theta_2 t}{2}}$ for all $t\in\R_+$.
The proof is complete.
\end{proof}

In Theorem \ref{T:Age1}, \eqref{eq:approxeg}, as in all proofs in this paper, $A \approx B$ means that $A/B\to 1$ in the limit specified, which is either $\alpha\to\infty$ or $n\to\infty$.  This is equivalent to $A=B+o(1)$ where $B$ converges and $o(1)$ represents terms which tends to $0$ in the limit.

\subsection{Scenario (ii): arbitrary selection, large sample size} \label{sec:bessub2}

The second scenario is when $n_2$ large with $\alpha$ fixed. We deal with this briefly because it is effectively covered by scenario (iii) when $\widetilde{\alpha}=0$.

The conditional genealogy of the $n_1$ type 1 individuals in the sample can be described as follows. 
Events among the type 1 lineages occur quickly under $\P_{\bf n}$ in the sense that $\tau_{n_1}$ is of order $O(1/n)$. However, if we measure time in proportion to $1/n$ coalescent time units and measure frequency on the scale of numbers of copies of alleles, then the $n_1$ type-1 lineages evolve (coalesce and mutate) as in the Ewens sampling formula. In particular, $K_1\approx 1+\sum_{k=2}^{n_1}\xi_k$, where $\{\xi_k\}$ are independent Bernoulli variables taking values in $\{0,1\}$ and having means $\frac{\theta_1}{\theta_1+k-1}$.

The rescaled frequency process for type 1 can be described precisely under the rescaling above by the Feller diffusion with drift:
\begin{linenomath*}
\begin{equation}\label{limitZ_scenario2}
d Z_t=\sqrt{Z_t}\,dW_t +\frac{\theta_1}{2} \,dt,\qquad t\in\R_+,
\end{equation}
\end{linenomath*}
with the initial distribution being the
Gamma random variable ${\rm Gam}(n_1+\theta_1,1)$.
See Figure \ref{fig:scenario2} for an illustration. Remark \ref{Rk:scenario_ii_iii} below explains how this is a special case of scenario (iii), with $\widetilde{\alpha}=0$.

Equation \eqref{limitZ_scenario2}  (also \eqref{limitZ} below) 
is a Cox-Ingersoll-Ross (CIR) model for interest rates in financial mathematics. It has several other names including  the Feller process and the square-root process \citep{dufresne2001integrated}. It
has a unique strong solution. This equation is not explicitly solvable, but its transition density is explicitly known \citep{vanyolos2014probability} and its moments and distributions have been intensively studied.

\subsection{Scenario (iii): strong selection, large sample size} \label{sec:bessub3}

The third scenario is when both $\lvert\alpha\rvert$ and $n_2$ large with $\widetilde{\alpha} = \alpha/n_2$ fixed.  Lemma \ref{L:Asymp_Initial} implies that 
\begin{linenomath*}
\begin{align}\label{postMean2}
\E_{\bf n}[p_0] &\approx
\begin{cases}
\frac{\theta_1+n_1}{1-\widetilde{\alpha}}\,\frac{1}{n} & \textrm{when } \quad  \widetilde{\alpha}\in (-\infty,1) \\
1-1/\widetilde{\alpha} & \textrm{when } \quad  \widetilde{\alpha}\in (1,\infty) 
\end{cases}
\quad \text{as }n\to\infty.
\end{align}
\end{linenomath*}
Therefore, it makes sense under this scenario to consider two cases: $\widetilde{\alpha}\in (-\infty, 1)$ and $\widetilde{\alpha} \in(1,\infty)$.

\subsubsection{Case \mathinhead{\widetilde{\alpha}\in (-\infty,1)}{(iii)a}}\label{sec:(iii)a}

In this case, under $\P_{\bf n}$ and as $n\to\infty$, we have that $p_0= O(1/n)$ by Lemma \ref{L:Asymp_Initial}. 
The genealogy of the $n_1$ type 1 lineages are the same as that in scenario (ii); see Figure \ref{fig:scenario2}. This description is justified by Theorems \ref{T:limitZ}-\ref{T:JointConvergence} below.

\FloatBarrier
\begin{figure}[h!]
    \centering    
    \includegraphics[scale=0.6]{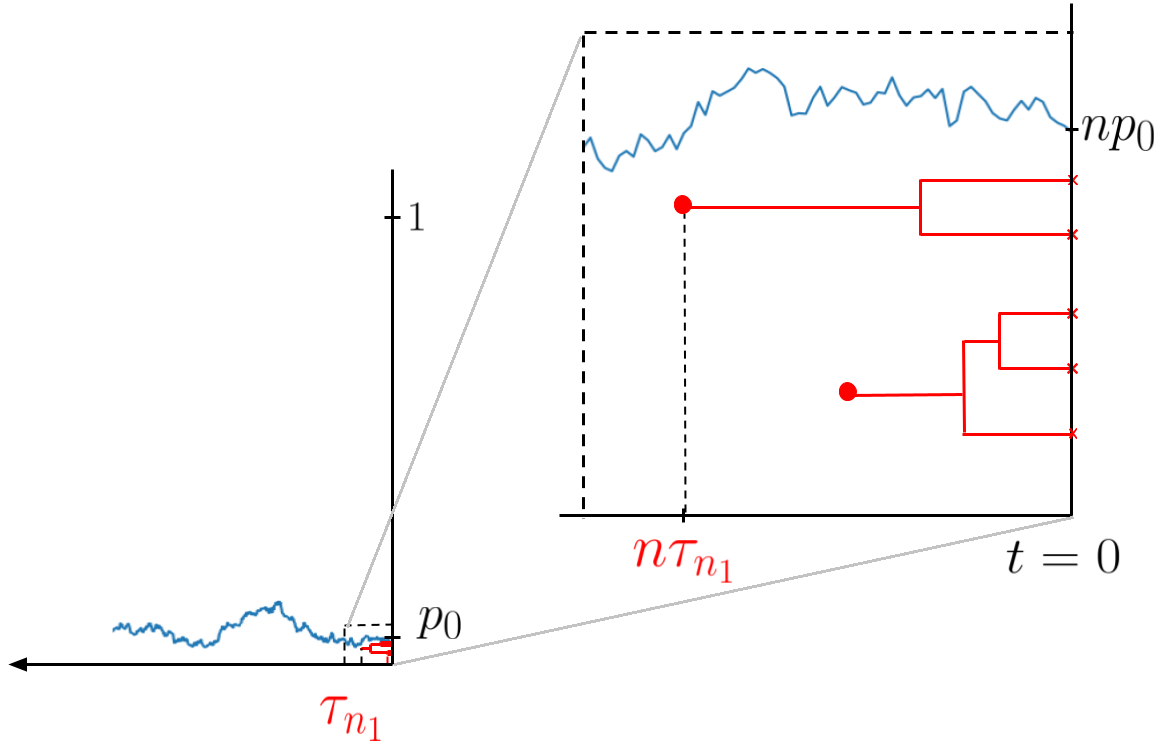}
    \caption{Conditional genealogy of a sample with observed  frequencies $(n_1,n_2)$ at the present time $t=0$, 
    where $n_1=5$ and $n_2$ is large, and $\alpha=\widetilde{\alpha}n_2$ for a constant $\widetilde{\alpha}\in (-\infty,1)$. The $n_2$ samples are not shown.
    In this figure, $K_1=2$ and the two red bullets are mutation events from type 1 to type 2. In
    scenario (iii), $K_1$ is distributed like the number of alleles in the Ewens sampling formula, and the timing of the type-1 events are small (of order $O(1/n)$ on the coalescent time scale). The rescaled process $\big(np_{\frac{t}{n}}\big)_{t\in \R_+}$ is well approximated by the diffusion process \eqref{limitZ} with initial distribution ${\rm Gam}(n_1+\theta_1,1-\widetilde{\alpha})$.
    }
    \label{fig:scenario2}
\end{figure}
\FloatBarrier

Let $(Z_t)_{t\in\R_+}$ be the $\R_+$-valued process  that has initial state $Z_0\sim {\rm Gamma}(n_1+\theta_1,1-\widetilde{\alpha})$ and solves the stochastic differential equation
\begin{linenomath*}  
\begin{equation}\label{limitZ}
d Z_t=\sqrt{Z_t}\,dW_t +\frac{1}{2}  (\theta_1+\widetilde{\alpha}\,Z_t)\,dt,\qquad t\in\R_+,
\end{equation}
\end{linenomath*}
where $W$ is the Wiener process. 

\begin{thm}[Convergence of rescaled genealogy]\label{T:limitZ}
Suppose  $\widetilde{\alpha} = \alpha/n_2\in (-\infty,1)$ is fixed.
As $n\to\infty$, the process $\left(np_{\frac{t}{n}},\,n^{obs}_1(\frac{t}{n}),\,L_1(\frac{t}{n}) \right)_{t\in \R_+}$   converges in distribution under $\P_{\bf n}$, 
in the Skorohod space $D(\R_+,\,\R_+\times\Z_+\times \Z_+)$,
to a Markov process  $\left(Z_t,\,\widetilde{n}_1(t),\,\widetilde{L}_1(t)\right)_{t\in\R_+}$
with state space $\R_+\times \{0,1,\cdots,n_1\}\times \{0,1,\cdots,n_1\}$ described as follows: 
\begin{itemize}
\item[(i)] $\,(Z_t)_{t\in\R_+}$ is a solution to \eqref{limitZ} with initial state $Z_0\sim {\rm Gamma}(n_1+\theta_1,1-\widetilde{\alpha})$. In particular, its transition kernel does not depend on $(\widetilde{n}_1,\,\widetilde{L}_1)$.
\item[(ii)] Suppose the current state is $(z,a_1,\ell_1)$. Then $(\widetilde{n}_1,\,\widetilde{L}_1)$ evolves as
\begin{linenomath*}
\begin{align}\label{E:Evo_type1_tb}
(a_1,\ell_1) &\to 
\begin{cases}
(a_1-1,\ell_1) & \textrm{at rate} \quad \frac{1}{z}\binom{a_1}{2}  \quad \quad \text{ coalescence of type 1} \\[6pt]
(a_1-1,\ell_1+1) & \textrm{at rate} \quad \frac{1}{z}\,a_1\,\frac{\theta_1}{2} \qquad \text{ mutation of 1 to 2}\\[6pt]
\end{cases}
\end{align}
\end{linenomath*}
\end{itemize}
\end{thm}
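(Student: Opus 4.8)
The plan is to prove the joint convergence by the standard generator-convergence-plus-tightness scheme for Markov processes (see \cite[Chap.~4]{ethier2009markov} and \cite[Chap.~3]{billingsley2013convergence}), supplying the initial conditions from Lemma \ref{L:Asymp_Initial}(iii). The structural fact that makes this tractable is that, by Proposition \ref{prop:BothType}(i), the frequency process $(p_t)$ is autonomous: its law does not involve the genealogy. I would therefore first establish convergence of the rescaled frequency $np_{t/n}\to Z$, and then bootstrap to the full triple, whose jump rates are explicit functionals of the frequency.

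For the frequency, set $Y_t\coleq np_{t/n}$ and carry out the time-change and Brownian-scaling computation exactly as in the proof of Lemma \ref{L:limitq_0}. Substituting $p_{s}=Y_{ns}/n$ into \eqref{eq:sde} and using $\beta/n=\tilde{\beta}n_2/n\to\tilde{\beta}$ gives, as in \eqref{E:limitY_q_0},
\[
Y_t-Y_0\;\eqd\;\int_0^t\sqrt{Y_r\left(1-\tfrac{Y_r}{n}\right)}\,dW_r+\frac12\int_0^t\left[\theta_1\left(1-\tfrac{Y_r}{n}\right)-\theta_2\tfrac{Y_r}{n}+\beta\tfrac{Y_r}{n}\left(1-\tfrac{Y_r}{n}\right)\right]dr.
\]
As $n\to\infty$ the diffusion coefficient tends to $\sqrt{Y_r}$ and the drift tends to $\tfrac12(\theta_1+\tilde{\beta}Y_r)$, so any subsequential limit solves \eqref{limitZ}; since $n/n_2\to1$, Lemma \ref{L:Asymp_Initial}(iii) gives $Y_0\to{\rm Gam}(n_1+\theta_1,1-\tilde{\beta})$. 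Tightness of $(Y)$ and identification of the limit then follow from the standard compactness argument cited in Lemma \ref{L:limitq_0}, together with strong uniqueness for the CIR/Feller equation \eqref{limitZ}.

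For the genealogy, observe that under the time change $t\mapsto t/n$ the jump rates in \eqref{E:Evo_type1} are multiplied by $1/n$. Since $1/p_{t/n}=n/Y_t$, the rescaled coalescence rate becomes $\tfrac1n\cdot\tfrac{1}{p_{t/n}}\binom{a_1}{2}=\tfrac{1}{Y_t}\binom{a_1}{2}$ and the rescaled mutation rate becomes $\tfrac1n\cdot\tfrac{1-p_{t/n}}{p_{t/n}}\,a_1\tfrac{\theta_1}{2}=\tfrac{1-p_{t/n}}{Y_t}\,a_1\tfrac{\theta_1}{2}$, which converge to the rates $\tfrac1{Z_t}\binom{a_1}{2}$ and $\tfrac{1}{Z_t}a_1\tfrac{\theta_1}{2}$ of \eqref{E:Evo_type1_tb} because $p_{t/n}\to0$. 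I would then write the generator $\mathcal G_n$ of the rescaled triple as the sum of the diffusion generator acting on the frequency variable and the pure-jump generator carrying these rates, and verify $\mathcal G_nf\to\mathcal Gf$ for test functions $f(z,a_1,\ell_1)$ that are $C^2_c$ in $z\in(0,\infty)$ and arbitrary in the finitely many discrete variables, where $\mathcal G$ is the generator of $(Z,\widetilde{n}_1,\widetilde{L}_1)$. Combined with the tightness from the frequency step (the discrete components make at most $n_1$ jumps, hence are automatically tight) and uniqueness of the martingale problem for $\mathcal G$, this yields the asserted joint convergence.

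The crux is the boundary singularity. The prelimit jump rates carry a factor $1/p_{t/n}=n/Y_t$, so the genealogical events are governed by the small-frequency behavior of $Y_t=np_{t/n}$, and the limiting rates in \eqref{E:Evo_type1_tb} blow up as $Z_t\downarrow0$; the generator convergence $\mathcal G_nf\to\mathcal Gf$ is therefore only locally uniform on $\{z>0\}$. The whole difficulty is to show that the jump times, equivalently the cumulative hazards $\int_0^{t}(np_{s/n})^{-1}\,ds$, converge to their limiting counterparts built from $Z$. I would control this through the boundary classification of the CIR/Feller process \eqref{limitZ}: since $\theta_1>0$ the drift at the origin is strictly positive, so $0$ is non-absorbing, the ergodicity of $Z$ forces the accumulated hazard to diverge as $t\to\infty$ (whence each event occurs at a finite time, consistent with $\tau_{n_1}<\infty$ under $\P_{\bf n}$), and the excursions of $Y$ near $0$ can be handled by localizing at $\{Y>\varepsilon\}$ and letting $\varepsilon\downarrow0$. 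Combined with uniqueness for the joint martingale problem — the frequency marginal solving the well-posed equation \eqref{limitZ}, and, given its path, the genealogy being a time-inhomogeneous pure-jump chain making at most $n_1$ jumps — this identifies the limit and completes the argument.
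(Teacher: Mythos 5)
Your proposal is correct in substance and, for the frequency component, is essentially the paper's own proof: the paper likewise sets $Y_t = np_{t/n}$, takes the initial law ${\rm Gam}(n_1+\theta_1,1-\tilde{\beta})$ from Lemma \ref{L:Asymp_Initial}(iii), performs the same Brownian time-change computation to obtain the prelimit SDE (its equation \eqref{E:limitY}), and concludes via a Gronwall moment bound and the standard compactness argument that $Y\Rightarrow Z$ solving \eqref{limitZ}. Where you genuinely differ is part (ii): the paper's proof of Theorem \ref{T:limitZ} stops at the frequency convergence and leaves the joint convergence with $\left(n^{obs}_1,L_1\right)$ implicit, dealing with the jump structure only afterwards, in Theorem \ref{T:JointConvergence}, by conditioning on the frequency path and representing the jump times as first jumps of time-inhomogeneous Poisson processes, i.e.\ by convergence of the cumulative hazards $\int_0^{t/n}\lambda_{a_1}(p_s)\,ds\to\int_0^t\widetilde{\lambda}_{a_1}(Z_s)\,ds$. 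Your generator-convergence-plus-localization scheme delivers part (ii) inside the theorem itself at the level of the joint martingale problem; it is more work, but it makes explicit what the paper defers, and your observation that the discrete components make at most $n_1$ jumps (so tightness of the pair reduces to tightness of $Y$) is exactly the right structural point. Both routes face the same singularity of the rates at $z=0$, and both treat it sketchily; your localization at $\{Y>\varepsilon\}$ with $\varepsilon\downarrow 0$ is a legitimate way to make this precise, since on paths where $Z$ approaches $0$ the accumulated hazard blows up, so all remaining jumps occur before the boundary is reached.

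One side claim in your boundary discussion is wrong and should be excised: $Z$ is ergodic only when $\tilde{\beta}<0$. For $\tilde{\beta}=0$ it is a (time-changed) squared Bessel process with no stationary law, and for $0<\tilde{\beta}<1$ it is a supercritical Feller branching with immigration, growing like $e^{\tilde{\beta}t/2}$, in which case $\int_0^\infty Z_s^{-1}\,ds$ is a.s.\ finite and the last events of the limiting genealogy need not occur at any finite time. This does not damage the theorem you are proving — convergence in distribution of the processes is a statement on compact time intervals — but the inference ``ergodicity forces the accumulated hazard to diverge, hence each event occurs at a finite time'' is false in part of the parameter range $\tilde{\beta}\in(-\infty,1)$; the a.s.\ finiteness of the prelimit $\tau_{n_1}$ under $\P_{\bf n}$ lives on the original time scale and does not transfer to the rescaled limit.
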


\begin{proof}
Let $Y_t\coleq np_{t/n}$.  By Lemma \ref{L:Asymp_Initial}, under $\P_{\bf n}$ we have  $Y_0=np_0\to  {\rm Gam}(n_1+\theta_1,1-\widetilde{\alpha})$. By \eqref{eq:sde},
\begin{linenomath*}
\begin{align}
Y_t-Y_0=&\,n\,(p_{t/n}-p_{0}) \notag\\
=&\,n\int_0^{t/n}\sqrt{p_s(1-p_s)}\,dW_s + \frac{n}{2}\int_0^{t/n}\theta_1(1-p_s)-\theta_2 p_s +\alpha p_s(1-p_s)\,ds \notag\\
\eqd&\,\frac{n}{\sqrt{n}}\int_0^{t}\sqrt{p_{r/n}(1-p_{r/n})}\,dW_r + \frac{n}{2}\int_0^{t}\theta_1(1-p_{r/n})-\theta_2 p_{r/n} +\alpha p_{r/n}(1-p_{r/n})\,\frac{dr}{n} \notag\\
=&\,\sqrt{n}\int_0^t\sqrt{\frac{Y_r}{n}\left(1-\frac{Y_r}{n}\right)}\,dW_r\,+\,
 \frac{1}{2}\int_0^{t}\theta_1\left(1-\frac{Y_r}{n}\right)-\theta_2 \frac{Y_r}{n} +\alpha \frac{Y_r}{n}\left(1-\frac{Y_r}{n}\right)\,dr \notag\\
 =&\,\int_0^t\sqrt{Y_r\left(1-\frac{Y_r}{n}\right)}\,dW_r\,+\,
 \frac{1}{2}\int_0^{t}\theta_1\left(1-\frac{Y_r}{n}\right)-\theta_2 \frac{Y_r}{n} + \frac{\alpha}{n} \,Y_r\left(1-\frac{Y_r}{n}\right)\,dr, \label{E:limitY}
\end{align}
\end{linenomath*}
where in the third line above we used the fact that the processes
$\left(\int_0^{t/n}f(s)\,dW_s\right)_{t\in \R_+}$ and $\left( \frac{1}{\sqrt{n}}\int_0^t f(r/n)dW_{r}\right)_{t\in \R_+}$ are equal in distribution, where $f(s)=\sqrt{p_s(1-p_s)}$.

Using \eqref{E:limitY}, the fact $\sup_{n}\E[Y_0^2]<\infty$ and the assumption $\widetilde{\alpha} = \alpha/n_2\in\R$ is fixed, we can check by Gronwall's inequality that $\limsup_{n\to\infty}\E_{\bf n}[\sup_{t\in[0,T]}Y_t]<\infty$ for all $T>0$. 
Now, note that equation \eqref{limitZ} is the same as \eqref{E:limitY} after we get rid of the terms $\frac{Y_r}{n}$  and replace $\frac{\alpha}{n}$
by $\widetilde{\alpha}$.
As $n\to\infty$, the process $(Y_t)_{t\in[0,T]}$ converges in distribution under $\P_{\bf n}$ to a process $(Z_t)_{t\in[0,T]}$ with initial state $Z_0 \sim  {\rm Gam}(n_1+\theta_1,1-\widetilde{\alpha})$ and solving \eqref{limitZ}. 

Using \eqref{E:Evo_type1}, the desired weak convergence 
in the Skorohod space $D(\R_+,\,\R_+\times\Z_+\times \Z_+)$
can be checked using a standard compactness argument as in \citet[Chap. 2]{billingsley2013convergence} or \citet[Chap. 3]{ethier2009markov}. 
That is, we first show that the family  is relatively compact: any subsequence has a further subsequence that converges in distribution as $n\to\infty$. This can be done using the Prohorov's theorem. Next, we identify that any subsequential limit is equal in distribution to the process $(Z,\,\widetilde{n}_1,\,\widetilde{L}_1)$, by showing that they solve the same martingale problem. 
\end{proof}

By Theorem \ref{T:limitZ}, the jump times of the process
$\left(n^{obs}_1(\frac{t}{n})\right)_{t\in \R_+}$ converge to those of the process $\widetilde{n}_1$ as $n\to\infty$. See, for instance, Proposition 5.3 in \citet[Chap. 3]{ethier2009markov}. We give a stronger statement and an explicit proof in Theorem \ref{T:JointConvergence} below. Theorem \ref{T:JointConvergence} also implies that  when $\widetilde{\alpha}\in(-\infty,1)$,   
the total number of latent mutations for type 1 is predicted by the Ewens sampling formula, as $n\to\infty$.
Let $\widetilde{\tau}_1< \widetilde{\tau}_2<\cdots<  \widetilde{\tau}_{n_1}$ be the jump times of the process $\widetilde{n}_1$ in Theorem \ref{T:limitZ}, at each of which the process decreases by 1.

\begin{thm}[Timing of events and number of mutations] \label{T:JointConvergence}
Suppose $\widetilde{\alpha} = \alpha/n_2\in (-\infty,1)$ fixed.
Then as $n\to\infty$, 
\begin{itemize}
\item[(i)]  the random vector $\left(n_2\,\tau_i,\;n_2\,p_{\tau_i}\right)_{i=1}^{n_1}$ under $\P_{\bf n}$
 converges in distribution to $\left(\widetilde{\tau}_{i},\;Z_{\widetilde{\tau}_{i}}\right)_{i=1}^{n_1}$.
\item[(ii)] $K_1$ converges in distribution under $\P_{\bf n}$ to $1+\sum_{k=2}^{n_1}\xi_k$, where $\{\xi_k\}$ are independent Bernoulli variables taking values in $\{0,1\}$ and having means $\frac{\theta_1}{\theta_1+k-1}$.
\end{itemize}
\end{thm}

\begin{proof}
For part (i), we first give a more explicit description of the jump times $\widetilde{\tau}_1< \widetilde{\tau}_2<\cdots<  \widetilde{\tau}_{n_1}$, in terms of the function
\begin{linenomath*}
\begin{equation*}
\widetilde{\lambda}_{a_1}(z)\coleq\frac{a_1}{2z}(\theta_1+a_1-1)
\end{equation*}
\end{linenomath*}
that comes from \eqref{E:Evo_type1_tb} in Theorem \ref{T:limitZ}.
At the first jump time $\widetilde{\tau}_1$, the process $\widetilde{n}_1$ decreases from $n_1$ to $n_1-1$.
Thus $\widetilde{\tau}_1$ is the first jump time of a Poisson process with time inhomogeneous rate $\left(\widetilde{\lambda}_{n_1}(Z_t)\right)_{t\in\R_+}$, given the trajectory $(Z_t)_{t\in\R_+}$. Hence,
\begin{linenomath*}
\begin{equation}\label{ttau1}
\P(\widetilde{\tau}_1>t)= \E\left[e^{-\int_0^t \widetilde{\lambda}_{n_1}(Z_s)\,ds}\right], \qquad t\in\R_+.
\end{equation}
\end{linenomath*}
Given $(\widetilde{\tau}_1,\,Z_{\widetilde{\tau}_1})$, the difference $ \widetilde{\tau_2}- \widetilde{\tau_1}$ is 
 the first jump time of an independent Poisson process with time inhomogeneous rate $\left(\widetilde{\lambda}_{n_1-1}(Z_{t+\widetilde{\tau}_1})\right)_{t\in\R_+}$. 
Given $(\widetilde{\tau}_2,\,Z_{\widetilde{\tau}_2})$, the difference $ \widetilde{\tau_3}- \widetilde{\tau_2}$ is  the first jump time of an independent Poisson process with time inhomogeneous rate $\left(\widetilde{\lambda}_{n_1-2}(Z_{t+\widetilde{\tau}_2})\right)_{t\in\R_+}$;  and so on. Finally,
 given $(\widetilde{\tau}_{n_1-1},\,Z_{\widetilde{\tau}_{n_1-1}})$, the difference $ \widetilde{\tau}_{n_1}- \widetilde{\tau}_{n_1-1}$ is 
 the first jump time of an independent Poisson process with time inhomogeneous rate $\left(\widetilde{\lambda}_{1}(Z_{t+\widetilde{\tau}_{n_1-1}})\right)_{t\in\R_+}$.  

Using the total rate of type-$1$ events, $\lambda_{a_1}(p)$ defined in \eqref{taurate}, and Theorem \ref{T:limitZ}, as $n\to\infty$ we have
\begin{linenomath*}
\begin{align*}
\int_0^{t/n}\lambda_{a_1}(p_s)\,ds =\int_0^t \frac{\lambda_{a_1}(p_{s/n})}{n}\,ds =&\,\int_0^t \frac{a_1}{n\,2p_{s/n}}\big(a_1-1+(1-p_{s/n})\theta_1\big)\,ds \\
\to&\, \int_0^t \frac{a_1}{2\,Z_s}(\theta_1+a_1-1) \,ds = \int_0^t\widetilde{\lambda}_{a_1}(Z_s)\,ds.
\end{align*}
\end{linenomath*}
Hence $\P_{\bf n}(n_2\tau_1>t)\to \P(\widetilde{\tau}_1>t)$ for all $t\geq 0$, by \eqref{ttau1}. 
Combining with Theorem \ref{T:limitZ},
we have that   $n_2\left(\tau_1,\;p_{\tau_1}\right)$ under $\P_{\bf n}$ converges in distribution to $\left(\widetilde{\tau}_{1},\;Z_{\widetilde{\tau}_{1}}\right)$ as $n_2\to\infty$.

Applying the strong Markov property of the process $Z$ at $\widetilde{\tau}_{1}$ and that of the process $p$ at $\tau_1$, we can similarly show that
$n_2\left(\tau_1,\tau_2-\tau_1,\;p_{\tau_1}, p_{\tau_2}-p_{\tau_1}\right)$ under $\P_{\bf n}$ converges to $\left(\widetilde{\tau}_{1},  \widetilde{\tau}_{2}-\widetilde{\tau}_{1},\;Z_{\widetilde{\tau}_{1}}, Z_{\widetilde{\tau}_{2}}-Z_{\widetilde{\tau}_{1}}\right)$ in distribution. Continuing in the same way, we obtain that $n_2\left(\tau_i-\tau_{i-1},\;p_{\tau_i}-p_{\tau_{i-1}}\right)_{i=1}^{n_1}$ under $\P_{\bf n}$ converges in distribution to $\left(\widetilde{\tau}_{i}-\widetilde{\tau}_{i-1},\;Z_{\widetilde{\tau}_{i}}-Z_{\widetilde{\tau}_{i-1}}\right)_{i=1}^{n_1}$, where $\tau_0=\widetilde{\tau}_{0}=0$. The desired convergence in part (i) then follows.

We now prove part (ii). 
the vector $(p_{\tau_i})_{i=1}^{n_1-1}$ converges in probability to the zero vector in $\R^{n_1-1}$ as $n\to\infty$, by Theorem \ref{T:limitZ}. This implies that
\begin{linenomath*}
\begin{equation*}
\left(h_{k}(p_{\tau_{n_1-k+1}})\right)_{k=2}^{n_1} \to \left(\frac{\theta_1}{\theta_1+k-1}\right)_{k=2}^{n_1} \in \R^{n_1-1},
\end{equation*}
\end{linenomath*}
where we recall  $h_{k}(p)\coleq\frac{(1-p)\theta_1}{(1-p)\theta_1+k-1 }$ defined in \eqref{tauprob}. Hence the following weak convergence in $\R^{n_1-1}$ holds:
\begin{linenomath*}
\begin{equation*}
\left(\xi_{k}(p_{\tau_{n_1-k+1}})\right)_{k=2}^{n_1} \toL \left(\xi_k\right)_{k=2}^{n_1}.
\end{equation*}
\end{linenomath*}
The proof of part (ii) is complete by \eqref{Latent_type1a}.
\end{proof}

In \citet[Appendix]{WakeleyEtAl2023}, we showed that for the case $\alpha=0$ (no selection) and $n_1>1$, the jump times 
$\tau_1<\tau_2<\cdots< \tau_{n_1}$ of  $n^{obs}_1$
are of order $1/n$ and the re-scaled vector $(n\tau_i)_{i=1}^{n_1}$ converges in distribution. 
Theorem \ref{T:JointConvergence} therefore generalizes the latter convergence in the presence of selection, in scenario (ii) and in the case $\widetilde{\alpha}\in (-\infty,1)$ within scenario (iii). This can further be generalized to time-varying populations, as we shall show below.
By equation (18) in \citet{WakeleyEtAl2023},  $\E_{\bf n}[\tau_1]\approx \frac{2\log n_2}{n_2}$  and $\E_{\bf n}[\widetilde{\tau}_{1}]=\infty$  when $n_1=1$ and $\alpha=0$.

\begin{remark}\rm \label{Rk:scenario_ii_iii}
Theorems \ref{T:limitZ}-\ref{T:JointConvergence}  remain valid if ``$\widetilde{\alpha} = \alpha/n_2\in (-\infty,1)$ is fixed" is replaced by ``$\alpha=\widetilde{\alpha}n_2+c$ where $\widetilde{\alpha},\,c\in\R$ are fixed". In particular, these results hold for scenario (ii).
\end{remark}

\subsubsection{Case \mathinhead{\widetilde{\alpha}\in (1,\infty)}{(iii)b}}\label{sec:(iii)b}

In this case, under $\P_{\bf n}$ and as $n\to\infty$, we have that
$p_0\to 1-1/\widetilde{\alpha}>0$ by \eqref{Dirac_initial}. 
The process $(p_t)_{t\in\R_+}$ increases very quickly and stays close to 1. 
As a result, the conditional ancestral process for the $n_1$ type 1 samples has two parts with different timescales.
First it
coalesces only as the Kingman coalescent (without mutation) until there is only one lineage. Then it takes a very long time (about $n\frac{2\,\widetilde{\alpha}}{\theta_1\theta_2} \approx \frac{2\,\alpha}{\theta_1\theta_2}$) for the single latent mutation to occur. In particular, $K_1\approx 1$.

This description is justified by Theorems \ref{T:tildealpha>1}-\ref{T:JointConvergence2}. See Figure \ref{fig:scenario3b} for an illustration.

\FloatBarrier
\begin{figure}[h!]
    \centering
    \includegraphics[scale=0.6]{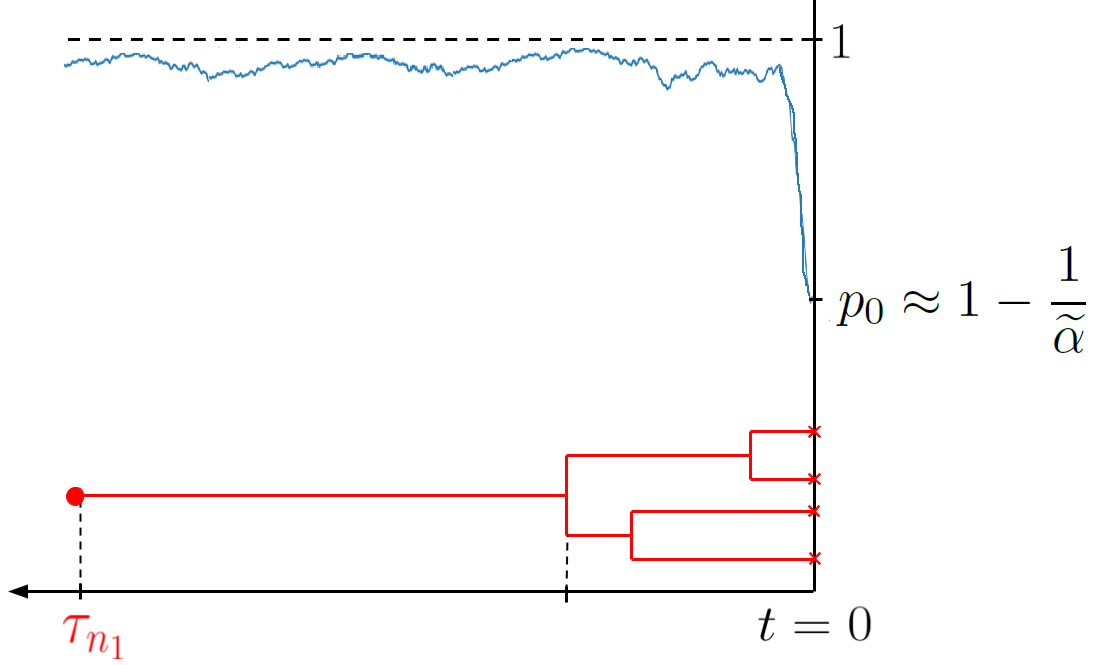}
    \caption{
    Conditional genealogy of a sample with observed  frequencies $(n_1,n_2)$ at the present time $t=0$, where $n_1=4$ and $n_2$ is large, and $\alpha=\widetilde{\alpha}n_2$ for a constant $\widetilde{\alpha}\in (1,\infty)$. The $n_2$ samples are not shown. This scenario is reminiscent of scenario (i) if we focus on the genealogy of only the $A_1$ lineages.
    These lineages first coalesce as the Kingman coalescent (without mutation) until there is only one lineage, which take $O(1)$ amount of time. Then it takes $O(n_2)$ amount of time for the single latent mutation to occur at time $\tau_{n_1}$. 
    }
    \label{fig:scenario3b}
\end{figure}
\FloatBarrier

\begin{thm}
\label{T:tildealpha>1}
Suppose $\widetilde{\alpha} = \alpha/n_2\in(1,\infty)$ is fixed. 
As $n\to\infty$,  under $\P_{\bf n}$,
\begin{itemize}
\item[(i)] $\sup_{t\in[S,T]} |1-p_t| \to 0$
in probability, for  any $0<S<T<\infty$; and
\item[(ii)] For any $T>0$, the process $(n^{obs}_1(t),\,L_1(t))_{t\in [0,T]}$ converges in distribution 
to a process $( \widetilde{n}^{obs}(t),\,0)_{t\in [0,T]}$, where $\widetilde{n}^{obs}$ is a pure death process with jump rate $\binom{k}{2}=\frac{k(k-1)}{2}$ from  $k$ to $k-1$.
\item[(iii)]  $K_1\to 1$ in probability.
\end{itemize}
\end{thm}

\begin{proof}
We first observe that the process $p$ gets close to 1 quickly, when $\widetilde{\alpha}\in (1,\infty)$, in the sense that for any $\epsilon\in(0,1)$,
\begin{linenomath*}
\begin{equation*}
\lim_{n\to\infty}\P_{\bf n}\left(\tau_{1-\epsilon}>S\right)=\lim_{n\to\infty}\P_{\bf n}\left(\sup_{t\in[0,S]}p_{t}<1-\epsilon\right)\,=\,0,
\end{equation*}
\end{linenomath*}
where $\tau_{1-\epsilon}\coleq\inf\{t\in\R_+:\,p_t>1-\epsilon\}$ is the first time $p$ hits a value above $1-\epsilon$. This is true because $p_0\to 1-1/\widetilde{\alpha}\in (0,1)$ in probability, so that the growth term $\widetilde{\alpha} n_2 \,p_t(1-p_t)$ is large at least when $t>0$ is small.
Next, suppose the process $p$ starts at $1-\epsilon$ (i.e. the process $q=1-p$ starts at $\epsilon$), we show that the exit time of the process $q$ out of the interval $[0,2\epsilon)$ is longer than $T$ with probability tending to 1, as $n\to\infty$. More precisely,
\begin{linenomath*}
\begin{align*}
&\P_{1-\epsilon}\left(\inf_{t\in[0,\,T]} p_t < 1-2\epsilon\right)=\,\P_{\epsilon}\left(\sup_{t\in[0,T]} q_t > 2\epsilon\right) 
\end{align*}
\end{linenomath*}
which tends to 0 as $n_2\to\infty$, as in the proof of Theorem \ref{T:scenario1}(i).

From these two estimates and the strong Markov property of the process $p$, we have that for any $\epsilon\in(0,1)$, 
\begin{linenomath*}
\begin{align*}
&\P_{\bf n}\left(\sup_{t\in[S,T]} |1-p_t| > 2\epsilon\right)\\
\leq&\, \P_{\bf n}\left(\sup_{t\in[S,T]} |1-p_t| > 2\epsilon,\,\tau_{1-\epsilon}\leq S\right) +\P_{\bf n}\left(\tau_{1-\epsilon}>S\right)\\
=&\,  \E_{\bf n}\left[ 1_{\{\tau_{1-\epsilon}\leq S\}} \,\P_{1-\epsilon}\left(\inf_{t\in[S-\,\tau_{1-\epsilon},\,T-\,\tau_{1-\epsilon}]} p_t < 1-2\epsilon\right) \right]+\P_{\bf n}\left(\tau_{1-\epsilon}>S\right)\\
\leq &\, \P_{1-\epsilon}\left(\inf_{t\in[0,\,T]} p_t < 1-2\epsilon\right) +\P_{\bf n}\left(\tau_{1-\epsilon}>S\right)\to 0 \qquad\text{as }n\to\infty.
\end{align*}
\end{linenomath*}
The proof of part (i) is complete.

By part (i) and \eqref{taurate}, the times for the type 1 events are of order $O(1)$ and   $h_k(p_t)\to 0$ where $h_k(p)$ is defined in \eqref{tauprob}. Hence parts (ii) and (iii) follow.
\end{proof}

Now we consider the second part of the genealogy,  when there is a single lineage left (i.e. during $\tau_{n_1-1}$ and $\tau_{n_1}$).

To estimate the age $\tau_{n_1}$ of the single latent mutation, we can ignore the $n_1-1$ jump times of the process $\widetilde{n}^{obs}$ (since they are of order 1 by Theorem \ref{T:tildealpha>1}). The frequency of type 1 is tightly regulated in the sense that it is  close to 1 in the sense of Theorem \ref{T:tildealpha>1}(i). However, we need to know ``how close it is to 1'' in order to get an estimate of  $\tau_{n_1}$, because simply setting $p=1$ in $\frac{1-p}{2p}\theta_1$ will give us zero.

Theorem \ref{T:JointConvergence2} below is analogous to Theorem \ref{T:Age1}.
We obtain that the mean of the age $\tau_{n_1}$ is about $n\frac{2\,\widetilde{\alpha}}{\theta_1\theta_2} \approx \frac{2\,\alpha}{\theta_1\theta_2}$ when it is larger than $\frac{2n}{\theta_1\theta_2}$ and $n$ is large.
\begin{thm}[Age of the unique latent mutation]\label{T:JointConvergence2}
Suppose  $\widetilde{\alpha} = \alpha/n_2\in(1,\infty)$ is fixed.
As $n\to\infty$, $\frac{\tau_{n_1}}{n}$  converges in distribution under $\P_{\bf n}$ to an exponential random variable with mean $\frac{2\,\widetilde{\alpha}}{\theta_1\theta_2}$.
That is,
\begin{linenomath*}
\begin{equation*}
 \frac{\tau_{n_1}}{n} \,\toL \,{\rm Exp}\left(\frac{\theta_1\,\theta_2}{2\,\widetilde{\alpha}}\right)\qquad \text{as}\quad n\to\infty.
\end{equation*}
\end{linenomath*}
\end{thm}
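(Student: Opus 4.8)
The plan is to follow the proof of Theorem \ref{T:Age1} almost verbatim, with the rescaling of Lemma \ref{L:limitq_0} replaced by that of Lemma \ref{L:limitq} and the role of $\beta$ taken over by $n$. First I would argue that only the final waiting time contributes. By Theorem \ref{T:tildebeta>1}(ii) the first $n_1-1$ jumps of $n^{obs}_1$ are pure coalescences occurring on the $O(1)$ time scale, so $\tau_{n_1-1}$ is tight and $\tau_{n_1-1}/n\to 0$ in probability. Writing $\tau_{n_1}=\tau_{n_1-1}+W_n$, where $W_n$ is the waiting time for the single remaining observed type-$1$ lineage (state $a_1=1$) to mutate, it suffices to show $W_n/n\toL\mathrm{Exp}\!\big(\theta_1\theta_2/(2\tilde{\beta})\big)$; Slutsky's theorem then transfers the conclusion to $\tau_{n_1}/n$.

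Next, conditioning on the trajectory $(p_s)_{s\ge 0}$, the rate at which the single lineage mutates is $\tfrac{\theta_1}{2}\tfrac{1-p_s}{p_s}$ by \eqref{E:Evo_type1}, so that
\[
\P_{\bf n}\!\left(\tfrac{W_n}{n}>t\,\Big|\,p\right)=\exp\!\left(-\tfrac{\theta_1}{2}\int_0^{nt}\tfrac{1-p_s}{p_s}\,ds\right).
\]
Setting $q_s=1-p_s$ and substituting $s=r/n$, the exponent equals $\tfrac{\theta_1 t}{2}\cdot\tfrac{1}{n^2 t}\int_0^{n^2 t}\tfrac{n\,q_{r/n}}{1-q_{r/n}}\,dr$. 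By Lemma \ref{L:limitq} the process $n q_{r/n}$ converges in distribution to the mean-reverting CIR process $\widetilde{q}$ solving \eqref{limitZ_q}, while $1-q_{r/n}\to 1$, so the integrand converges to $\widetilde{q}_r$. Since $\widetilde{q}$ is ergodic with stationary law $\mathrm{Gam}(\theta_2,\tilde{\beta})$ of mean $\theta_2/\tilde{\beta}$, the ergodic theorem gives $\tfrac{1}{T}\int_0^T\widetilde{q}_r\,dr\to\theta_2/\tilde{\beta}$ as $T\to\infty$. Taking $T=n^2t\to\infty$, the exponent converges to the deterministic constant $\tfrac{\theta_1 t}{2}\cdot\tfrac{\theta_2}{\tilde{\beta}}=\tfrac{\theta_1\theta_2}{2\tilde{\beta}}t$, so by bounded convergence $\P_{\bf n}(W_n/n>t)\to e^{-\theta_1\theta_2 t/(2\tilde{\beta})}$, which is the desired exponential tail with mean $2\tilde{\beta}/(\theta_1\theta_2)$.

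The main obstacle is the same delicate interchange of limits that is treated only informally in Theorem \ref{T:Age1}: the process-level convergence $n q_{r/n}\to\widetilde{q}_r$ of Lemma \ref{L:limitq} holds on compact $r$-intervals, whereas the time average above runs over $[0,n^2 t]$, whose length diverges with $n$. To make this rigorous I would combine the weak convergence with the self-averaging of $\widetilde{q}$: partition $[0,n^2 t]$ into blocks, use the mixing (exponential ergodicity) of the CIR process to control the fluctuation of the block averages, and transfer these bounds to $n q_{r/n}$ uniformly in the starting configuration via a generator or coupling estimate. A second, smaller complication is the initial condition, since $q_0=1-p_0\to 1/\tilde{\beta}$ is of order $1$ rather than $O(1/n)$, so Lemma \ref{L:limitq} does not apply verbatim at $t=0$. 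This is handled by noting from the large drift $\tfrac{\beta}{2}p(1-p)$ together with Theorem \ref{T:tildebeta>1}(i) that $p$ reaches the quasi-equilibrium level $q=O(1/n)$ within time $O(1/n)$; the contribution of this transient to $\int_0^{nt}\tfrac{1-p_s}{p_s}\,ds$ is then $O(1/n)$ and negligible, and one may restart the rescaling by the strong Markov property from a time at which $n q$ is already tight around $\mathrm{Gam}(\theta_2,\tilde{\beta})$.
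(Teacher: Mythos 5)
Your proposal is correct and takes essentially the same approach as the paper's own proof: both reduce the problem to the survival probability of the last mutation event, use Lemma~\ref{L:limitq} to rescale $q=1-p$ into the CIR process \eqref{limitZ_q}, and invoke its ergodicity with stationary law ${\rm Gam}(\theta_2,\tilde{\beta})$ (mean $\theta_2/\tilde{\beta}$) to identify the limiting exponent $\theta_1\theta_2 t/(2\tilde{\beta})$. If anything, you are more careful than the paper, which handles the interchange of the compact-interval weak convergence with the time average over $[0,n^2t]$, and the $O(1)$ initial condition $q_0\to 1/\tilde{\beta}$, only implicitly via Theorem~\ref{T:tildebeta>1}; the technical gaps you flag and patch are genuine but are present in the published argument as well.
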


\begin{proof}
By Theorem \ref{T:tildealpha>1} (ii), 
for any $t\in\R_+$, 
\begin{linenomath*}
\begin{align}
\P_{\bf n}\left(\frac{\tau_{n_1}}{n}>t\right) 
\approx &\, \E\left[\exp{\left\{\frac{-\theta_1\,t}{2n^2t}\int_0^{n^2t} \frac{n(1-p_{r/n})}{p_{r/n}}\,dr\right\}}\right] \qquad \text{as } n\to\infty.\notag
\end{align}
\end{linenomath*}
The rest follows exactly as  the proof of Theorem \ref{T:Age1}.
\end{proof}

\subsection{Time-varying population size}\label{S:varying}

For a population with time-varying size  $\rho(t)N$ at forward time $t$ where $\rho$ is a non-constant  function, neither the Moran process nor its diffusion approximation  possess a stationary distribution. However,  the random background approach of \citet{BartonEtAl2004} can be generalized to this setting by considering the time-reversed frequency process. 

Our main message in this section is that the limiting results in scenarios (i) and (ii) are robust against continuously-changing population sizes and the initial distribution $\mu_0$ of the initial (ancient) frequency $X_0$. Roughly speaking, in scenario (ii) as $n_2 \sim n \to \infty$, events among the finite-count $A_1$ alleles in the sample are so sped up that the population size will have hardly changed by the time all their coalescent and latent mutation events have occurred.  The same is true for events among the finite-count $A_2$ alleles in scenario (i) as $\alpha\to\infty$.  Events among the finite-count $A_1$ alleles occur more slowly in scenario (i), but the rate of latent mutations among them remains exceedingly small.  The limiting result for scenario (iii) is more subtle. It depends on the large deviation behavior of the present day frequency as $n_2\to\infty$. 

Let $T>0$ be the present.  For comparison with our results for constant population size, we keep the same definitions of $\theta_1$, $\theta_2$ and $\alpha$ and we set $\rho(T) = 1$. Thus, $N$ is the population size at the present time $T$, and $\theta_1$, $\theta_2$ and $\alpha$ are the present-day values of these variables.  The corresponding values at some other time $t$ are $N\rho(t)$, $\theta_1\rho(t)$, $\theta_2\rho(t)$ and $\alpha\rho(t)$.  The demographic function $\rho$ could for example represent exponential population growth, in which case $\rho(t)=\rho(0)e^{\beta t}$ for some positive constant $\beta$.  This model was used in \citet{WakeleyEtAl2023} to illustrate the effects of rapid growth on neutral rare variation in humans.  Here we allow that $\rho(t)$ is piecewise continuous.  As will become clear, the key feature of $\rho$ for our results is that it is continuous at $T$.

Since the random background approach of \citet{BartonEtAl2004} was formulated based on the lineage dynamics of the Moran model, we begin by describing the diffusion process arising from a Moran model with time-varying population size. 

\begin{lemma}[Diffusion limit for time-varying Moran model]\label{L:MoranVary}
Let $\rho:\,\R_+\to(0,\infty)$ be a   
piecewise continous function with finitely many jumps, and $N$ be a positive integer.
Consider the discrete-time Moran process in which, at step $k=[N(N-1)t/2]$,  the total population size is $[\rho(t)N]$ and  $N$ is replaced by $\rho(t)N$ in the one-step transition probabilities \eqref{eq:pjplusone}-\eqref{eq:pjminusone}. Suppose $u_1=\theta_1/N$, $u_2=\theta_2/N$ and $s=-\alpha/N$. Then  as $N\to\infty$,
the relative frequency of $A_1$  at step $[N(N-1)t/2]$  converges in distribution to $X_t$ solving
\begin{linenomath*}
\begin{equation}
dX_t = \left[\frac{\theta_1}{2\rho(t)} (1-X_t) - \frac{\theta_2}{2\rho(t)} X_t + \frac{\alpha}{2\rho(t)} X_t (1-X_t) \right]dt + \sqrt{\frac{X_t (1-X_t)}{\rho^2(t)}}\, dW_t, \label{eq:sde_vary}
\end{equation}
\end{linenomath*} 
where $W_t$ is the Wiener process, provided that the initial relative frequency converges to $X(0)$.
\end{lemma}

Setting $\rho(t)\equiv 1$ for all $t\in\R_+$, or $\beta=0$ in the exponential growth model, makes \eqref{eq:sde_vary} identical to \eqref{eq:sde}. 
The term $\rho^2(t)$ in the denominator inside the square root comes from the diffusion timescale of the Moran model: for a population of constant size, one unit of time in the diffusion is $N(N-1)/2 \propto N^2$ time steps in the discrete model.  To explain the term $\frac{\alpha}{2\rho(t)}=\frac{-s \rho(t)N}{2\rho^2(t)}$, note that the rate of change of $X_t$ due to selection is proportional to the product of the total size $\rho(t)N$ and the parameter $s$, which is then multiplied by $1/\rho^2(t)$ because the timescale in \eqref{eq:sde_vary} is defined in terms of the present-day population size $N$.  The proof of Lemma \ref{L:MoranVary} is given in \ref{sec:lemmaproofs}.

\begin{remark}[Wright-Fisher model with varying size]\rm
The analogous diffusion process $X^{\rm WF}$ for the discrete Wright-Fisher model with total size $[\rho(t)N]$ in generation $[Nt]$ is \textit{different from} the process $X$ in \eqref{eq:sde_vary}, except in the case $\rho(t)\equiv 1$ for all $t\in \R_+$.  This  diffusion solves the SDE
\begin{linenomath*}
\begin{equation*}
dX^{\rm WF}_t = \left[\frac{\theta_1}{2} (1-X^{\rm WF}_t) - \frac{\theta_2}{2} X^{\rm WF}_t + \frac{\alpha}{2} X^{\rm WF}_t (1-X^{\rm WF}_t) \right]dt + \sqrt{\frac{X^{\rm WF}_t (1-X^{\rm WF}_t)}{\rho(t)}}\, dW_t, 
\end{equation*}
\end{linenomath*} 
which is the adaptation of equation (1) in \citet{SchraiberEtAl2016} to our haploid model of selection and recurrent mutation; see also equation (21) in \citet{evans2007non}.  The generators of $X^{\rm WF}$ and $X$ are related by $\mathcal{A}^{\rm WF}_t=\rho(t)\mathcal{A}_t$ for all $t\in\R_+$.  In other words, the diffusion $X^{\rm WF}$ from the discrete Wright-Fisher model is sped up by the factor $\rho(t)$ at time $t$. 

To compare $X$ and $X^{\rm WF}$, 
we can perform deterministic time-changes to normalize their diffusion coefficients to be the same.
In general, suppose $X$ satisfies the SDE
$dX_t= b(t,X_t) dt+ \sigma(t,X_t)\, dW_t$ and
$Y$ is a time-change of $X$ defined by $Y_r:=X_{\psi(r)}$, where $\psi$ is any fixed continuous and strictly increasing function, then
\begin{linenomath*}
\begin{equation*}
dY_r= b(\psi(r),Y_r)\,\psi'(r)\,dr +\sigma(\psi(r),Y_r)\,\sqrt{\psi'(r)}\,dW_r.
\end{equation*}
\end{linenomath*}
Hence, 
when $X$ is a (weak) solution to \eqref{eq:sde_vary} and $\psi=g^{-1}$ where
$g$ is the unique continuous function such that $g(t)=\int_0^t\frac{1}{\rho^2(s)}ds$, we obtain that $Y_r:=X_{g^{-1}(r)}$ solves
\begin{linenomath*}
\begin{equation}
dY_r =  \rho(g^{-1}(r))\,b(Y_r)\,dr + \sqrt{Y_r (1-Y_r)}\, dW_r, \label{eq:sde_vary_normalized}
\end{equation}
\end{linenomath*} 
where $b(y)=\frac{\theta_1}{2} (1-y) - \frac{\theta_2 }{2} y + \frac{\alpha}{2} y (1-y)$.
Analogously, following \citet{SchraiberEtAl2016}---see their equation (6) and the SDE below it---and taking $f$ such that $f(t)=\int_0^t\frac{1}{\rho(s)}ds$, we find that $Y^{\rm WF}_r:=X^{\rm WF}_{f^{-1}(r)}$ solves
\begin{linenomath*}
\begin{equation}
dY^{\rm WF}_r =  \rho(f^{-1}(r))\,b(Y^{\rm WF}_r)\,dr + \sqrt{Y^{\rm WF}_r (1-Y^{\rm WF}_r)}\, dW_r. \label{eq:WFsde_vary_normalized}
\end{equation}
\end{linenomath*}  
Since $f\neq g$ unless $\rho(t)\equiv 1$ for all $t\in\R_+$, we have that $Y\neq Y^{\rm WF}$, i.e. $X_{g^{-1}(r)}\neq X^{\rm WF}_{f^{-1}(r)}$ in general. Nonetheless, \eqref{eq:sde_vary_normalized} and \eqref{eq:WFsde_vary_normalized} have the same form, the only difference being the way time $r$ in these diffusions is related back to time $t$ in the discrete models.  
\end{remark}

\medskip

Note that the law of the present-time frequency of $A_1$ in the model of Lemma~\ref{L:MoranVary} depends on the distribution $\mu_0$ of the initial frequency $X(0)$. This law is denoted by $\P_{\mu_0}(X_T\in dy)$. 

Suppose a sample of $n$ individuals are picked uniformly at random at the present time $T>0$, i.e.\ when the frequency of $A_1$ is $X_T$, and we know that $n_1$ of them are of type 1 (and $n-n_1$ are of type 2).
Let $\P_{\bf n}=\P_{\bf n,\mu_0}$ be the conditional probability measure given the sample count ${\bf n}=(n_1,n_2)$. We also denote the conditional law of the present frequency
$p_0=X_T$, under $\P_{\bf n}$, by $\mathcal{L}^{{\bf n}}\coleq\P_{\bf n}(X_T\in dy)= \P_{\mu_0}(X_T\in dy\,|\,{\bf n})$. 
Then 
\begin{linenomath*}
\begin{align} 
\mathcal{L}^{{\bf n}} = C_T\, y^{n_1} (1-y)^{n_2}\,\P_{\mu_0}(X_T\in dy), \label{eq:condlaw_T}
\end{align}
where $C_T=\left(\int_0^1 y^{n_1} (1-y)^{n_2} \P_{\mu_0}(X_T\in dy)  \right)^{-1}$ is a normalizing constant.
\end{linenomath*}
This follows from Bayes' theorem, just like \eqref{eq:condphi1} did, but with prior distribution $\P_{\mu_0}(X_T\in dy)$.

Similar to Proposition \ref{prop:BothType}, the conditional ancestral process in the diffusion limit  can be described as follows. This description involves  the backward frequency process   
\begin{linenomath*}
\begin{equation}\label{pt_vary}
p_t \coleq X_{T-t} \qquad \text{for} \quad t\in[0,T]
\end{equation}
\end{linenomath*} 
which is by definition the time-reversal of the process $X$. 

\begin{prop}[Conditional ancestral process]\label{prop:BothType_vary}
Let $T>0$ and $\mu_0\in \mathcal{P}([0,1])$ be fixed, and the demographic function $\rho$ be as in Lemma \ref{L:MoranVary}. 
The process $(p_t,\,n^{obs}_1(t),\,L_1(t),\,n^{obs}_2(t),\,L_2(t))_{t\in [0,T]}$ under $\P_{\bf n}$ is a time-inhomogeneous Markov process with state space $[0,1]\times \{0,1,\cdots,n_1\}^2 \times  \{0,1,\cdots,n_2\}^2$ described as follows: 
\begin{itemize}
\item[(i)] $\,(p_t)_{t\in[0,T]}$, defined by \eqref{pt_vary}, has the law of $(X_{T-t})_{t\in[0,T]}$ under $\P_{{\bf n}}$. In particular, it has initial distribution $\mathcal{L}^{{\bf n}}$ in \eqref{eq:condlaw_T} and it does not depend on the process $(n^{obs}_1,\,L_1,\,n^{obs}_2,\,L_2)$.
\item[(ii)] The process $(n^{obs}_1,\,L_1,\,n^{obs}_2,\,L_2)$ starts at $(n_1,0,n_2,0)$.
When this process is at time $t$ and the current state is $(p,a_1,\ell_1,a_2,\ell_2)$, this process evolves as
\begin{linenomath*}
\begin{align}\label{E:Evo_type1_vary}
(a_1,\ell_1) &\to 
\begin{dcases}
(a_1-1,\ell_1) & \textrm{at rate} \quad \frac{1}{p}\binom{a_1}{2}\frac{1}{\rho^2(T-t)} \quad \quad \text{ coalescence of type 1}^{obs} \\[6pt]
(a_1-1,\ell_1+1) & \textrm{at rate} \quad \frac{1-p}{p}\,a_1\,\frac{\theta_1}{2 \rho(T-t)} \qquad  \text{ mutation of 1}^{obs}\text{ to 2}\\[6pt]
\end{dcases}
\end{align}
\end{linenomath*}
and, independently,
\begin{linenomath*}
\begin{align}\label{E:Evo_type2_vary}
(a_2,\ell_2) &\to 
\begin{dcases}
(a_2-1,\ell_2) & \textrm{at rate} \; \frac{1}{1-p}\binom{a_2}{2}\frac{1}{\rho^2(T-t)}   \quad \quad \text{ coalescence of type 2}^{obs} \\[6pt]
(a_2-1,\ell_2+1) & \textrm{at rate} \quad \frac{p}{1-p}\,a_2\,\frac{\theta_2}{2  \rho(T-t)} \qquad  \text{ mutation of 2}^{obs}\text{ to 1}\\[6pt]
\end{dcases}
\end{align}
\end{linenomath*}
\end{itemize}
\end{prop}

\smallskip

Note the term $\rho(T-t)$ in \eqref{E:Evo_type1_vary}-\eqref{E:Evo_type2_vary} indicates the dependence of the conditional ancestral process on the demographic function. Nonetheless, Proposition \ref{prop:BothType_vary} still gives a description 
 for $K_1$ and $K_2$ in terms of Bernoulli random variables,
like \eqref{Latent_type1a} and \eqref{Latent_type2a} respectively.  
For example, under $\P_{\bf n}$, \eqref{Latent_type1a} still holds but \eqref{xi_K1} needs to be modified. 
Indeed, given $\{(\tau_i,p_{\tau_i})\}_{i=1}^{n_1-1}$, the random variables $\{\xi_{k}(\cdot)\}_{k=2}^{n_1}$ are independent and
\begin{linenomath*}
\begin{align} 
\xi_{n_1-i+1}(p_{\tau_{i}}) &=
\begin{dcases}
0 & \textrm{with probability} \quad \frac{n_1-i}{(1-p_{\tau_{i}})\theta_1\rho(T-\tau_i)+n_1-i }  \\[6pt]
1 & \textrm{with probability} \quad \frac{(1-p_{\tau_{i}})\theta_1\rho(T-\tau_i)}{(1-p_{\tau_{i}})\theta_1\rho(T-\tau_i)+n_1-i } 
\end{dcases}\label{xi_K1_vary}
\end{align}
\end{linenomath*}
which further generalizes $\xi_k(p)$ in \eqref{xi_K1} to include $\rho(t)$.  An analogous description holds for $K_2$.

\begin{remark}\rm\label{Rk:SDE for p}
A more explicit description for the process $(X_{t})_{t\in[0,T]}$ under $\P_{\bf n}$, hence also that for its time-reversal $(p_{t})_{t\in[0,T]}$, can be obtained by Doob's h-transform \citep{Doob1957,Doob2001}. More precisely, we define the function
\begin{linenomath*}
\begin{equation*}
h(t,x)\coleq\P({\bf n}\,|\,X_t=x)=\E\left[\binom{n}{n_1}X_{T}^{n_1}(1-X_{T})^{n_2}\,\Big|\,X_t=x\right].
\end{equation*}
\end{linenomath*}
Then $(X_{t})_{t\in[0,T]}$ under the conditional probability $\P_{\bf n}$ solves the SDE 
\begin{linenomath*}
\begin{equation*}
dX_t=
 \left[b(t,X_t) + \sigma^2(t,X_t)\,\partial_x\log h(t,X_t) \right]dt+ \sigma(t,X_t)\, d\widetilde{W}_t,
\end{equation*}
\end{linenomath*}
where $b(t,x)\coleq \frac{\theta_1}{2 \rho(t)} (1-x) - \frac{\theta_2}{2 \rho(t)}x + \frac{\alpha}{2 \rho(t)} x (1-x)$  and $\sigma(t,x)=\sqrt{\frac{x(1-x)}{\rho^2(t)}}$ are the coefficients in \eqref{eq:sde_vary}, and $\widetilde{W}$ is a Brownian motion.  Sufficient conditions on the function $\rho$ for which the process  $(p_t)_{t\in[0,T]}$ satisfies a stochastic differential equation may be deduced from an integration by parts argument as in \cite{millet1989integration}.
\end{remark}

Next, we look at asymptotics.
The following analogue of Lemma \ref{L:Convphi} holds for any initial distribution $\mu_0$ of $X(0)$ and any demographic function $\rho$ that is bounded and positive. Note that in Lemma \ref{L:Convphi}, $\mu_0=\phi_{\alpha}$ depends on $\alpha$, but here $\mu_0$ is fixed.
\begin{prop}\label{prop:Convphi_vary}
Let $T>0$ and $\mu_0\in \mathcal{P}([0,1])$ be fixed, and the demographic function $\rho$ be as in Lemma \ref{L:MoranVary}.  The following  convergences in $\mathcal{P}([0,1])$ hold.
\begin{itemize}
\item[(i)] Suppose  $n_2$ is fixed and $\alpha\to\infty$. Then  $\mathcal{L}^{{\bf n}}  \to \delta_1$.
\item[(ii)] Suppose  $\alpha$ is fixed and $n_2\to\infty$. Then  $\mathcal{L}^{{\bf n}}  \to \delta_0$.
\item[(iii)] Suppose $\widetilde{\alpha}, c\in\R$ are fixed  and $\alpha=\widetilde{\alpha}n_2+c\to\infty$. Suppose $\P_{\mu_0}(X_T\in dy)$ has a density $p(T, \mu_0,y)\,dy$ and there exists a large deviation rate function $\mathcal{I}:[0,1]\to [0,\infty]$ such that for each $y\in[0,1]$,
\begin{linenomath*}
\begin{equation*}
\frac{\log p(T, \mu_0,y) }{n_2} \to \mathcal{I}(y) \qquad\text{as }n\to\infty.
\end{equation*}
\end{linenomath*}
Suppose also that 
$(1-y) \,e^{\mathcal{I}(y)}$ has a unique maximum at $y_*\in[0,1]$. 
Then $\mathcal{L}^{{\bf n}}  \to \delta_{y_*}$, and $\mathcal{I}(y_*)=\frac{1}{1-y_*}$.
\end{itemize}
\end{prop}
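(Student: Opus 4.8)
The plan is to treat all three parts as Laplace-type asymptotics for the tilted prior in \eqref{eq:condlaw_T}. Writing the conditional law explicitly,
\begin{linenomath*}
\begin{equation*}
\int_0^1 f(y)\,\mathcal{L}^{\bf n}(dy) = \frac{\int_0^1 f(y)\,y^{n_1}(1-y)^{n_2}\,\P_{\mu_0}(X_T\in dy)}{\int_0^1 y^{n_1}(1-y)^{n_2}\,\P_{\mu_0}(X_T\in dy)},\qquad f\in C_b([0,1]),
\end{equation*}
\end{linenomath*}
so everything reduces to locating where the mass of the tilted measure $y^{n_1}(1-y)^{n_2}\,\P_{\mu_0}(X_T\in dy)$ accumulates. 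For part (ii), $\beta$ is fixed so the prior $\P_{\mu_0}(X_T\in dy)$ does not move; the factor $(1-y)^{n_2}$ is maximized at $y=0$ and, since $y^{n_1}$ is bounded and the prior carries positive mass near $0$, a Laplace argument identical in spirit to the proof of Lemma \ref{L:Convphi}(ii) gives $\mathcal{L}^{\bf n}\to\delta_0$. For part (i), $n_2$ is fixed but the prior itself now depends on $\beta$: as $\beta\to\infty$ the upward drift $\frac{\beta}{2}X(1-X)$ in \eqref{eq:sde_vary} forces $X_T$ toward $1$ for each fixed $T>0$ and $\mu_0$. One verifies, as in Lemma \ref{L:Convphi}(i), that the exponential-in-$\beta$ push toward $1$ supplied by this drift overwhelms the fixed polynomial penalty $(1-y)^{n_2}$, whence $\mathcal{L}^{\bf n}\to\delta_1$.

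For part (iii) I would apply the Laplace method (Varadhan's lemma) to the tilted measure. Since $n_1$ is fixed, the polynomial factor $y^{n_1}$ is negligible on the exponential scale, and the large deviation hypothesis $\frac{1}{n_2}\log p(T,\mu_0,y)\to\mathcal{I}(y)$ gives
\begin{linenomath*}
\begin{equation*}
\frac{1}{n_2}\log\!\big(y^{n_1}(1-y)^{n_2}p(T,\mu_0,y)\big)\;\longrightarrow\; G(y)\coleq \log(1-y)+\mathcal{I}(y).
\end{equation*}
\end{linenomath*}
Maximizing $G$ is the same as maximizing $(1-y)\,e^{\mathcal{I}(y)}$, whose unique maximizer is $y_*$ by hypothesis. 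The standard Laplace/Varadhan estimate then forces the normalized tilted measure to concentrate at $y_*$, i.e.\ $\mathcal{L}^{\bf n}\to\delta_{y_*}$ weakly in $\mathcal{P}([0,1])$. Finally, when $y_*\in(0,1)$ the interior first-order condition $G'(y_*)=0$ reads $-\frac{1}{1-y_*}+\mathcal{I}'(y_*)=0$, which is the stationarity identity $\mathcal{I}'(y_*)=\frac{1}{1-y_*}$ for $y_*$ (at a boundary maximizer this becomes the corresponding inequality). Taking $\tilde{\beta}=0$ recovers part (ii) as a consistency check.

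The main obstacle is upgrading the pointwise limit of $\frac{1}{n_2}\log p(T,\mu_0,y)$ into a genuine concentration statement, since Laplace's method needs more than convergence at individual points. Concretely, the lower bound requires that a neighborhood of $y_*$ contribute at least $e^{n_2(G(y_*)-\epsilon)}$ to the denominator, which needs the convergence to $\mathcal{I}$ to be locally uniform (or $\mathcal{I}$ suitably semicontinuous) near $y_*$; the upper bound requires that the complement of a neighborhood of $y_*$ be exponentially negligible, which on the compact interval $[0,1]$ follows from upper semicontinuity of $G$ together with the strictness of its maximum. Two boundary subtleties must also be handled: the factor $y^{n_1}$ vanishes at $y=0$, so when $y_*=0$ one must check that it does not destroy the lower bound (it does not, since a fixed power of $y$ cancels between numerator and denominator), and the tilt $\log(1-y)\to-\infty$ as $y\to 1$ must be controlled so the integrals converge. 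All of these are routine once the large deviation hypothesis is read as locally uniform convergence on $[0,1]$, which I would state explicitly; granting that, the Varadhan machinery delivers both $\mathcal{L}^{\bf n}\to\delta_{y_*}$ and the stationarity identity.
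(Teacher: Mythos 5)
Your proposal is correct on the three concentration claims and follows essentially the same route as the paper: write $\mathcal{L}^{\bf n}$ as the ratio of tilted expectations, get (i) from the selective drift pushing $X_T$ to $1$ fast enough to beat the fixed polynomial penalty $(1-y)^{n_2}$, get (ii) because $(1-y)^{n_2}$ peaks at $y=0$ while the remaining factors are $n_2$-independent, and get (iii) by Laplace asymptotics for $G(y)=\log(1-y)+\mathcal{I}(y)$, exactly as in the proof of Lemma \ref{L:Convphi}.

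Two remarks. First, on the final identity your proof and the stated proposition genuinely differ: you derive the first-order condition $\mathcal{I}'(y_*)=\frac{1}{1-y_*}$ at an interior maximizer, while the proposition asserts $\mathcal{I}(y_*)=\frac{1}{1-y_*}$. Your version is the correct one. In the paper's own remark following the proposition (constant population size, stationary prior) one has $\mathcal{I}(y)=\tilde{\beta}y$ and, for $\tilde{\beta}>1$, $y_*=1-1/\tilde{\beta}$, so that $\mathcal{I}(y_*)=\tilde{\beta}-1$ whereas $\frac{1}{1-y_*}=\tilde{\beta}$: the stated identity fails in the paper's own example, while $\mathcal{I}'(y_*)=\tilde{\beta}=\frac{1}{1-y_*}$ holds. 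The stated form is evidently a misprint for the derivative condition; the paper's proof never addresses the identity at all, so your first-order-condition argument supplies something the paper omits. Second, the uniformity caveat you raise is a real gap in the paper as written: its hypothesis is only pointwise convergence of $n_2^{-1}\log p(T,\mu_0,y)$, and its proof writes the density as $\bigl[e^{\mathcal{I}(y)}\bigr]^{n_2}\varphi(y)$ with $n_2^{-1}\log\varphi\to 0$ and then invokes a ``standard argument as in the proof of Lemma \ref{L:Convphi}''; as you note, Laplace's method requires locally uniform control near $y_*$ together with an exponentially negligible bound away from it, so reading the hypothesis as locally uniform convergence, stated explicitly, is the honest way to close the argument.
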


\begin{remark}\rm 
The assumptions in (iii) hold when $\rho(t) \equiv 1$ and $\mu_0=\phi_{\alpha}$ in \eqref{eq:phix}, i.e. constant population size with stationary initial condition. In this case,
$\mathcal{I}(y)=\widetilde{\alpha}\,y$ is the linear function, and
$\mathcal{L}^{{\bf n}}=\phi^{(n_1,n_2)}_{\alpha}$ in \eqref{eq:condphi2}.
The rate function has a phase transition at $\widetilde{\alpha}=1$, as shown in Lemma \ref{L:Convphi}. Namely, $y_*=0$ when $\widetilde{\alpha}\in (-\infty,1)$ and $y_*=1-1/\widetilde{\alpha}$ when $\widetilde{\alpha}\in (1,\infty)$. 
\end{remark}

\begin{remark}\rm
The large deviation principle for $\P_{\mu_0}(X_T\in dy)$ as $n_2\to\infty$ can be checked using the G\"artner-Ellis theorem \cite[Theorem 2.3.6]{demboLDP}. When it holds, the  rate function $\mathcal{I}$ is equal to the Legendre transform of the  function
\begin{linenomath*}
\begin{equation*}
\Lambda(\gamma)\coleq\lim_{n_2\to\infty}\frac{1}{n_2}\log \E_{\mu_0}[e^{\gamma\,n_2\,X_T}].
\end{equation*}
\end{linenomath*}
\end{remark}

\begin{proof}
A proof follows from that of Lemma \ref{L:Convphi}. 
Let $f\in C_b([0,1])$, a bounded continuous function on $[0,1]$. Then
\begin{linenomath*}
\begin{align}
\E_{{\bf n}}[f(p_0)]=\int_0^1 f(y)\,\mathcal{L}^{{\bf n}} (dy)
=&\,\frac{\E_{\mu_0}[f(X_T)\,X_T^{n_1}(1-X_T)^{n_2}]}{\E_{\mu_0}[X_T^{n_1}(1-X_T)^{n_2}]}.\label{Posterior_vary}
\end{align}
\end{linenomath*}

For part (i), note that if $(n_1,n_2)$ is fixed and $\alpha\to\infty$, then 
$\P_{\mu_0}(X_T<1-\epsilon)\to 0$ for any $\epsilon>0$ as in the proof of Theorem \ref{T:tildealpha>1}(i). Hence $\E_{\mu_0}[|f(X_T)-f(1)|] \to 0$ for any $f\in C_b([0,1])$.
In particular, $\P_{\mu_0}(X_T\in dy)\to\delta_1$. Hence $\mathcal{L}^{{\bf n}}$ tends to  $\delta_1$ in $\mathcal{P}([0,1])$, as $\alpha\to \infty$.

For part (ii), note that $(1-y)^{n_2}$ has maximum at $y=0$, and $y^{n_1}\,\P_{\mu_0}(X_T\in dy)$ does not depend on $n_2$. Hence $\mathcal{L}^{{\bf n}}$ tends to $\delta_0$ in $\mathcal{P}([0,1])$, as $n_2\to \infty$.

For part  (iii), the numerator of \eqref{Posterior_vary} is
\begin{linenomath*}
\begin{align*}
\E_{\mu_0}[f(X_T)\,X_T^{n_1}(1-X_T)^{n_2}] 
=&\, \int_0^1 f(y)\,y^{n_1} (1-y)^{n_2}\, \P_{\mu_0}(X_T\in dy) \\
\approx &\, \int_0^1 f(y)\,y^{n_1} \,[(1-y)\, e^{\mathcal{I}(y)}]^{n_2}\,\varphi(y)\,dy,
\end{align*}
\end{linenomath*}
for some function $\varphi$ such that $\frac{\ln \varphi(y) }{n_2}\to 0$, by assumptions of part (iii). Since
$(1-y) \,e^{\mathcal{I}(y)}$ has a unique maximum at $y_*$, 
$\lim_{n\to\infty}\E_{{\bf n}}[f(p_0)]=f(y_*)$ by \eqref{Posterior_vary} and a standard argument as in the proof of Lemma \ref{L:Convphi}. 
\end{proof}

By Proposition \ref{prop:BothType_vary} and  Proposition \ref{prop:Convphi_vary}, similar limiting results for the conditional coalescent process for scenarios (i) and (ii) hold for any positive function $\rho:\,[0,T]\to(0,\infty)$ that is \textit{continuous near the current time $T$}.  Note that $\rho$ is bounded away from zero on any compact time interval $[0,T]$ and therefore analogous approximations for the frequency process $(p_t)_{t\in[0,T]}$ under $\P_{\bf n}$ 
still hold, where the new approximating functions now involve the $h$ function in Remark \ref{Rk:SDE for p}.

More precisely, in scenario (i), Theorem \ref{T:scenario1} still holds, and Theorem \ref{T:Age1} still holds but with a possibly different limiting random variable. Hence $s_{n_2} \sim O(1/\alpha)$ is very small and $p_{\tau_i} \sim 1$, so $K_1\to 1$ by \eqref{xi_K1_vary}, and the single latent mutation for type 1 is very old.  

In scenario (ii), $\tau_{n_1}\sim O(1/n)$ is very small, and recalling that $\rho(T) = 1$, we have $K_1\approx 1+\sum_{k=2}^{n_1}\xi_k$ by \eqref{xi_K1_vary}, where $\{\xi_k\}$ are independent Bernoulli variables taking values in $\{0,1\}$ and having means $\frac{\theta_1}{\theta_1+k-1}$.  Theorem \ref{T:limitZ} with $\alpha \in \R$ fixed still holds, but the statement needs to be modified because the approximating process  $Z$ in \eqref{limitZ} will be replaced by another one that involves the $h$ function in Remark \ref{Rk:SDE for p}. 

Scenario (iii) is harder to analyze and we leave it for future work. We conjecture that if $y_*=0$, then the conditional genealogy behaves like scenario (ii); and if $y_*\in (0,1]$, then the conditional genealogy behaves like scenario (i).

\section{Conditional ancestral selection graph} \label{sec:condasg}

Our aim in this section is to see how the results of the previous section can be obtained from a different model: the ancestral selection graph.  We concentrate on the ancestry of focal allele $A_1$ and on constant population size, and proceed more heuristically than in the previous section.  

The ancestral selection graph is an augmented coalescent model for the joint distribution of the gene genealogy and the allelic states of the sample \citep{KroneAndNeuhauser1997,NeuhauserAndKrone1997}.  It includes the usual coalescent rate $1$ per pair of lineages and mutation rate $\theta/2$ per lineage.  Additionally, under the stationary model of Section~\ref{sec:intro}, it includes a \textit{branching} rate of $\lvert\alpha\rvert/2$ per lineage.  When a branching event occurs, the lineage splits into an \textit{incoming} lineage and a \textit{continuing} lineage.  One of these is \textit{real}, meaning it is included in the gene genealogy.  The other is \textit{virtual}, meaning it is there only to model the gene genealogy correctly with selection.  Which is which could be resolved if their allelic states were known: the incoming lineage is real if its allelic type is the one favored by selection, otherwise the continuing lineage is real.  But the allelic states are not known in the construction of the ancestral selection graph. 

The conditional ancestral selection graph models gene genealogies given a sample with allelic states specified \citep{Slade2000a,Slade2000b,Fearnhead2001,Fearnhead2002,StephensAndDonnelly2003,BaakeAndBialowons2008}.  In this case it is known which lineages are real and which are virtual.  This allows a simplification in which there is a reduced rate of branching and only virtual lineages of the disfavored type are produced \citep{Slade2000a}.  A second simplification is possible if mutation is parent-independent: then any lineage which mutates may be discarded \citep{Fearnhead2002}.  

We assume parent-independent mutation, specifically $\theta_1 = \theta \pi_1$ and $\theta_2 = \theta \pi_2$, with $\pi_1+\pi_2=1$.  Any two-allele mutation model can be restated in this way, leaving the stationary probability density \eqref{eq:phix} and the sampling probability \eqref{eq:qn1} unchanged.  But doing so introduces ``spurious mutations to one's own type'' \citep{Donnelly1986} or ``empty mutations'' \citep{BaakeAndBialowons2008} which occur only in the model and do not correspond to a biological process.  These are not latent mutations.  Including them allows us to discard real $A_2$ lineages and any virtual lineage once these mutate, but we must distinguish between empty and actual  mutations in the ancestry of $A_1$.  

The resulting conditional process tracks the numbers of real and virtual ancestral lineages from the present time $t=0$ back into the past.  Let $r_1(t)$, $r_2(t)$ and $v_i(t)$, where $i=1$ if $\alpha<0$ or $i=2$ if $\alpha>0$, be the numbers of real type-$1$, real type-$2$ and virtual type-$i$ lineages at past time $t$.  The process begins in state $r_1(0)=n_1$, $r_2(0)=n_2$, $v_i(0)=0$ and stops when $r_1(t)+r_2(t)=1$.  We suppress $t$ in what follows, and focus on the instantaneous transition rates of the process.   

The conditional ancestral process is obtained by considering rates of events in the unconditional process, which has total rate $(r_1+r_2+v_i)(\theta+\lvert\alpha\rvert+r_1+r_2+v_i-1)/2$, then weighting rates of events depending on how likely they are to produce the sample.  Rates of some events are down-weighted to zero.  For instance, the sample could not have been obtained if there were a coalescent event between lineages with different allelic types, whereas in the unconditional process these happen with rate $r_1r_2$ plus either $r_2v_1$ or $r_1v_2$, depending on whether $\alpha<0$ or $\alpha>0$.  

Rates of events for which the sample has a non-zero chance of being observed are up-weighted or down-weighted by ratios of sampling probabilities like 
\eqref{eq:qn1}.  This method of conditioning a Markov process on its eventual outcome is stated simply in \citet[p.\ 64]{KemenyAndSnell1960}, a familiar example being the Wright-Fisher diffusion conditioned on eventual fixation \citep[p.\ 89]{Ewens2004}, and is characterized more generally by Doob's h-transform \citep{Doob1957,Doob2001}.  In the conditional ancestral selection graph, the Markov process is the (unconditional) ancestral process of \citet{KroneAndNeuhauser1997} and the eventual outcome is the sample with allelic states specified.   

In our formulation, the samples and their ancestral lineages all are distinguishable, which we denote with a subscript ``o'' for ordered as in \citet{WakeleyEtAl2023}.  The probability of any particular allelic configuration in the ancestry of the sample, in which there are $r_1$ lineages of type $1$, $r_2$ lineages of type $2$ and $v_i$ lineages of type $i\in\{1,2\}$, is 
\begin{linenomath*}
\begin{align}
q_o(r_1,r_2,v_1) &= \int_0^1 x^{r_1+v_1} (1-x)^{r_2} \phi_\alpha(x) dx \quad \text{if } \,\alpha<0 \label{eq:qr1r2v1} \\[4pt]
q_o(r_1,r_2,v_2) &= \int_0^1 x^{r_1} (1-x)^{r_2+v_2} \phi_\alpha(x) dx \quad \text{if } \,\alpha>0 \label{eq:qr1r2v2}  
\end{align} 
\end{linenomath*}
with $\phi_\alpha(x)$ as in \eqref{eq:phix}.  Note, the additional binomial coefficient in the sampling probability \eqref{eq:qn1} is the number of possible orderings of a sample containing $n_1$ and $n_2$ copies of $A_1$ and $A_2$.  

The rate of any particular event with non-zero probability in the conditional process is the product of its rate in the unconditional process and a ratio of sampling probabilities from either \eqref{eq:qr1r2v1} or \eqref{eq:qr1r2v2}.  For event $(r_1,r_2,v_i)\to(r_1^\prime,r_2^\prime,v_i^\prime)$, the required ratio is $q_o(r_1^\prime,r_2^\prime,v_i^\prime)/q_o(r_1,r_2,v_i)$.  The denominator $q_o(r_1,r_2,v_i)$ is the probability of the sample given all events so far in the conditional ancestral process, which have led to the current state $(r_1,r_2,v_i)$, and the numerator $q_o(r_1^\prime,r_2^\prime,v_i^\prime)$ is the probability of the sample given these events and the event $(r_1,r_2,v_i)\to(r_1^\prime,r_2^\prime,v_i^\prime)$.  \ref{sec:asgdetails} provides the details of how the minimal ancestral process we use here to model latent mutations in the ancestry of the sampled copies of allele $A_1$ is obtained from the full conditional ancestral process, using the simplifications of \citet{Slade2000a} and \citet{Fearnhead2002}.

The resulting conditional ancestral process differs depending on whether $\alpha<0$ or $\alpha>0$ but in either case it includes five possible transitions from state $(r_1,r_2,v_i)$.  If $\alpha<0$,   
\begin{linenomath*}
\begin{equation}
(r_1,r_2,v_1) \to 
\begin{cases}
(r_1-1,r_2+1,v_1) & \text{at rate} \quad r_1 \frac{\theta\pi_1}{2} \frac{q_o(r_1-1,r_2+1,v_1)}{q_o(r_1,r_2,v_1)} \\[6pt]
(r_1-1,r_2,v_1) & \text{at rate} \quad \binom{r_1}{2} \frac{q_o(r_1-1,r_2,v_1)}{q_o(r_1,r_2,v_1)} \\[6pt]
(r_1,r_2,v_1+1) & \text{at rate} \quad (r_1+r_2+v_1) \frac{\lvert\alpha\rvert}{2} \frac{q_o(r_1,r_2,v_1+1)}{q_o(r_1,r_2,v_1)} \\[6pt]
(r_1,r_2,v_1-1) & \text{at rate} \quad {\left( v_1 \frac{\theta\pi_1}{2} + r_1 v_1 + \binom{v_1}{2} \right)} \frac{q_o(r_1,r_2,v_1-1)}{q_o(r_1,r_2,v_1)} \quad \\[6pt]
(r_1,r_2-1,v_1) & \text{at rate} \quad {\left( r_2 \frac{\theta\pi_2}{2} + \binom{r_2}{2} \right)} \frac{q_o(r_1,r_2-1,v_1)}{q_o(r_1,r_2,v_1)} \label{eq:asgn1alphanegcases} 
\end{cases}
\end{equation}
\end{linenomath*}
whereas if $\alpha>0$,  
\begin{linenomath*}
\begin{equation}
(r_1,r_2,v_2) \to 
\begin{cases}
(r_1-1,r_2+1,v_2) & \text{at rate} \quad \frac{r_1 \theta\pi_1}{2} \frac{q_o(r_1-1,r_2+1,v_2)}{q_o(r_1,r_2,v_2)} \\[6pt]
(r_1-1,r_2,v_2) & \text{at rate} \quad \binom{r_1}{2} \frac{q_o(r_1-1,r_2,v_2)}{q_o(r_1,r_2,v_2)} \\[6pt]
(r_1,r_2,v_2+1) & \text{at rate} \quad (r_1+r_2+v_2) \frac{\alpha}{2} \frac{q_o(r_1,r_2,v_2+1)}{q_o(r_1,r_2,v_2)} \\[6pt]
(r_1,r_2,v_2-1) & \text{at rate} \quad {\left( v_2 \frac{\theta\pi_2}{2} + r_2 v_2 + \binom{v_2}{2} \right)} \frac{q_o(r_1,r_2,v_2-1)}{q_o(r_1,r_2,v_2)} \quad \\[6pt]
(r_1,r_2-1,v_2) & \text{at rate} \quad {\left( r_2 \frac{\theta\pi_2}{2} + \binom{r_2}{2} \right)} \frac{q_o(r_1,r_2-1,v_2)}{q_o(r_1,r_2,v_2)} \label{eq:asgn1alphaposcases} 
\end{cases}
\end{equation}
\end{linenomath*}
which differ owing to the different resolutions of branching events when $\alpha<0$ versus $\alpha>0$.  We may note that the total rates of events in \eqref{eq:asgn1alphanegcases} and \eqref{eq:asgn1alphaposcases} are less than in the unconditional ancestral process because the conditional process has a reduced rate of branching \citep{Slade2000a} and because empty mutations do not change the number or types of ancestral lineages.  If $\alpha<0$, the total rate is $r_1 \theta\pi_1/2 + r_2 \lvert\alpha\rvert/2$ less, whereas if $\alpha>0$, it is $r_1 \theta\pi_1/2 + r_1 \alpha/2$ less.  

Asymptotic approximations for the ratios $q_o(r_1^\prime,r_2^\prime,v_i^\prime)/q_o(r_1,r_2,v_i)$ in these rates of events can be obtained using the results in \ref{sec:asymptotics}.  In the following three subsections we present approximations to the conditional ancestral process for our three scenarios of interest: (i) $\lvert\alpha\rvert$ large with $n_2$ fixed, (ii) $n_2$ large with $\alpha$ fixed, and (iii) both $\lvert\alpha\rvert$ and $n_2$ large with $\widetilde{\alpha} = \alpha/n_2$ fixed.  Because initially $r_2=n_2$, we consider $r_2$ large in the scenarios with $n_2$ large.  For each scenario, we compute the transition rates up to leading order in $\lvert\alpha\rvert$ or $r_2$, then consider how these conform to the corresponding results of Section~\ref{sec:bes}. 

\subsection{Scenario (i): strong selection, arbitrary sample size} \label{sec:condasgsub1}

Here $\lvert\alpha\rvert$ is large with $n_2$ fixed, along with $n_1$ and $\theta$.  In Section~\ref{sec:bessub1}, Theorem~\ref{T:scenario1}, we treated the ancestries of $A_1$ and $A_2$ simultaneously as $\alpha\to+\infty$, so that $A_1$ was favored and $A_2$ was disfavored.  Here we cover these same two possibilities by modeling the ancestry of $A_1$, using \eqref{eq:asgn1alphanegcases} when $A_1$ is disfavored ($\alpha<0$) and  \eqref{eq:asgn1alphaposcases} when $A_1$ is favored ($\alpha>0$).  We disregard the ancestry of the non-focal allele $A_2$ except insofar as it is needed to model events in the ancestry of $A_1$.     

When $\alpha<0$, using \eqref{eq:1F1s1} in \eqref{eq:asgn1alphanegcases} gives    
\begin{linenomath*}
\begin{equation}
(r_1,r_2,v_1) \to 
\begin{cases}
(r_1-1,r_2+1,v_1) & \text{at rate} \quad r_1 \frac{\lvert\alpha\rvert}{2} \frac{\theta\pi_1}{\theta\pi_1+r_1+v_1-1} \,+\, O{\left(1\right)} \\[6pt]
(r_1-1,r_2,v_1) & \text{at rate} \quad r_1 \frac{\lvert\alpha\rvert}{2} \frac{r_1-1}{\theta\pi_1+r_1+v_1-1} \,+\, O{\left(1\right)} \\[6pt]
(r_1,r_2,v_1+1) & \text{at rate} \quad \frac{(r_1+r_2+v_1)(\theta\pi_1+r_1+v_1)}{2} \,+\, O{\left(\lvert\alpha\rvert^{-1}\right)} \\[6pt]
(r_1,r_2,v_1-1) & \text{at rate} \quad v_1 \frac{\lvert\alpha\rvert}{2} \frac{\theta\pi_1+2r_1+v_1-1}{\theta\pi_1+r_1+v_1-1} \,+\, O{\left(1\right)} \\[6pt]
(r_1,r_2-1,v_1) & \text{at rate} \quad r_2 \frac{\theta\pi_2}{2} + \binom{r_2}{2} \,+\, O{\left(\lvert\alpha\rvert^{-1}\right)} \label{eq:asgn1largealphanegcases} 
\end{cases}
\end{equation}
\end{linenomath*}
for the ancestry of a disfavored allele under strong selection (as $\alpha\to-\infty$).  Latent mutations and coalescent events occur with rates proportional to $\lvert\alpha\rvert$.  Virtual lineages are removed similarly quickly but are produced at a much lower rate.  So $v_1$ will stay zero during the $O(1/\lvert\alpha\rvert)$ time it takes for the requisite $n_1$ latent mutations or coalescent events to occur.  Then the analogous result to \eqref{E:one-step type2}, namely \eqref{eq:pcoalj} and \eqref{eq:K1sum}, follows from the first two lines of \eqref{eq:asgn1largealphanegcases}.  Coalescence and mutation among the copies of $A_2$ occur at the slower rate, so none of these should occur before all the type-$1$ lineages disappear.  These results were first suggested in \citet{Wakeley2008}.

When $\alpha>0$, using \eqref{eq:1F1s2} in \eqref{eq:asgn1alphaposcases} gives    
\begin{linenomath*}
\begin{equation}
(r_1,r_2,v_2) \to 
\begin{cases}
(r_1-1,r_2+1,v_2) & \text{at rate} \quad r_1 \frac{1}{\alpha} \frac{\theta\pi_1 (\theta\pi_2+r_2+v_2)}{2} \,+\, O{\left(\alpha^{-2}\right)} \\[6pt](r_1-1,r_2,v_2) & \text{at rate} \quad \binom{r_1}{2} \,+\, O{\left(\alpha^{-1}\right)} \\[6pt]
(r_1,r_2,v_2+1) & \text{at rate} \quad \frac{(r_1+r_2+v_2)(\theta\pi_2+r_2+v_2)}{2} \,+\, O{\left(\alpha^{-1}\right)} \\[6pt]
(r_1,r_2,v_2-1) & \text{at rate} \quad v_2 \frac{\alpha}{2} \frac{\theta\pi_2+2r_2+v_2-1}{\theta\pi_2+r_2+v_2-1} \,+\, O{\left(1\right)} \\[6pt]
(r_1,r_2-1,v_2) & \text{at rate} \quad r_2 \frac{\alpha}{2} \frac{\theta\pi_2+r_2-1}{\theta\pi_2+r_2+v_2-1} \,+\, O{\left(1\right)} 
\label{eq:asgn1largealphaposcases} 
\end{cases}
\end{equation}
\end{linenomath*}
for the ancestry of a favored allele under strong selection (as $\alpha\to+\infty$).  Now $A_2$ is undergoing the fast process just described for $A_1$ in \eqref{eq:asgn1largealphanegcases}, so these lineages will disappear quickly.  Again the rate of removal of virtual lineages greatly exceeds their rate of production.  In $O(1/\alpha)$ time, the ancestral state will become $(r_1,r_2,v_2)=(n_1,0,0)$.  But now with $A_1$ favored, the rates of coalescence and latent mutation differ by a factor of $\alpha$, so the first $n_1-1$ events will be coalescent events, followed by a long wait for a single latent mutation with rate $\theta^2\pi_1\pi_2/(2\alpha)$ as in Theorem~\ref{T:Age1}.

\subsection{Scenario (ii): arbitrary selection, large sample size} \label{sec:condasgsub2}

Here $n_2$ is large with $\alpha$ fixed, along with $n_1$ and $\theta$.  Because $r_2=n_2$ at the start of the ancestral process, we present rates of events to leading order in $1/r_2$.  In Section~\ref{sec:bessub2} we deferred this scenario to Section~\ref{sec:bessub3}, because in the limit it is equivalent to $\widetilde{\alpha}=0$.  Of course, there are two ways for $\widetilde{\alpha}$ to approach zero, and the sign of $\widetilde{\alpha}$ matters in \eqref{limitZ} for any $\widetilde{\alpha}$ not strictly equal to zero.  Here we consider the two cases, $\alpha<0$ and $\alpha>0$, separately.

When $\alpha<0$, using \eqref{eq:1F1approxw} in \eqref{eq:asgn1alphanegcases} gives    
\begin{linenomath*}
\begin{equation}
(r_1,r_2,v_1) \to 
\begin{cases}
(r_1-1,r_2+1,v_1) & \text{at rate} \quad r_1 \frac{r_2}{2} \frac{\theta\pi_1}{\theta\pi_1+r_1+v_1-1} \,+\, O{\left(1\right)} \\[6pt]
(r_1-1,r_2,v_1) & \text{at rate} \quad r_1 \frac{r_2}{2} \frac{r_1-1}{\theta\pi_1+r_1+v_1-1} \,+\, O{\left(1\right)} \\[6pt]
(r_1,r_2,v_1+1) & \text{at rate} \quad \frac{\lvert\alpha\rvert(\theta\pi_1+r_1+v_1)}{2} \,+\, O{\left(r_2^{-1}\right)} \\[6pt]
(r_1,r_2,v_1-1) & \text{at rate} \quad v_1 \frac{r_2}{2} \frac{\theta\pi_1+2r_1+v_1-1}{\theta\pi_1+r_1+v_1-1} \,+\, O{\left(1\right)} \\[6pt]
(r_1,r_2-1,v_1) & \text{at rate} \quad \frac{r_2^2}{2} \,+\, O{\left(r_2\right)} 
\label{eq:asgn1largen2alphanegcases} 
\end{cases}
\end{equation}
\end{linenomath*}
This differs from the neutral case \citep{WakeleyEtAl2023} only by the possibility of virtual lineages. As in \eqref{eq:asgn1largealphanegcases}, these will be removed quickly if they are produced. The process of latent mutation and coalescence happens in $O(1/r_2)$ time, with relative rates in the first two lines of \eqref{eq:asgn1largen2alphanegcases} again giving \eqref{eq:pcoalj} and \eqref{eq:K1sum}.  Because $r_2 \to \infty$, this approximation will hold long enough for the required fixed number of events among the $A_1$ lineages to occur, despite the rapid decrease of $r_2$ in the last line of \eqref{eq:asgn1largen2alphanegcases}.  A proof of this is given in \citet[Appendix]{WakeleyEtAl2023}.  Theorem~\ref{T:JointConvergence} addresses the corresponding issues for the model of Section~\ref{sec:bes}.

When $\alpha>0$, using \eqref{eq:1F1approxw} in \eqref{eq:asgn1alphaposcases} gives    
\begin{linenomath*}
\begin{equation}
(r_1,r_2,v_2) \to 
\begin{cases}
(r_1-1,r_2+1,v_2) & \text{at rate} \quad r_1 \frac{r_2}{2} \frac{\theta\pi_1}{\theta\pi_1+r_1-1} \,+\, O{\left(1\right)} \\[6pt]
(r_1-1,r_2,v_2) & \text{at rate} \quad r_1 \frac{r_2}{2} \frac{r_1-1}{\theta\pi_1+r_1-1} \,+\, O{\left(1\right)} \\[6pt]
(r_1,r_2,v_2+1) & \text{at rate} \quad \frac{r_2 \lvert\alpha\rvert}{2} \,+\, O{\left(1\right)} \\[6pt]
(r_1,r_2,v_2-1) & \text{at rate} \quad v_2 r_2 \,+\, O{\left(1\right)} \\[6pt]
(r_1,r_2-1,v_2) & \text{at rate} \quad \frac{r_2^2}{2} \,+\, O{\left(r_2\right)}  
\label{eq:asgn1largen2alphaposcases} 
\end{cases}
\end{equation}
\end{linenomath*}
which differs from \eqref{eq:asgn1largen2alphanegcases} in two ways.  Now the rate of production of virtual lines is non-negligible.  But here their presence does not affect the rates of latent mutation and coalescence.  Again we have \eqref{eq:pcoalj} and \eqref{eq:K1sum}, and the process of latent mutation and coalescence happens in $O(1/r_2)$ time.

\subsection{Scenario (iii): strong selection, large sample size} \label{sec:condasgsub3}

Here both $\lvert\alpha\rvert$ and $n_2$ are large with $\widetilde{\alpha} = \alpha/n_2$ fixed, along with $n_1$ and $\theta$.  Again since the process begins with $r_2=n_2$, we present rates of events to leading order in $1/r_2$.  Because the conditional ancestral process differs for $\alpha<0$ versus $\alpha>0$, i.e.\ with \eqref{eq:asgn1alphanegcases} and \eqref{eq:asgn1alphaposcases}, and the asymptotic approximation we use for the hypergeometric function differs for $\widetilde{\alpha}<1$ versus $\widetilde{\alpha}>1$, i.e.\ with \eqref{eq:1F1ws1} and \eqref{eq:1F1ws3}, here we have three cases.  Note these are the same three cases in \eqref{eq:qn1n2alphatilde1}, \eqref{eq:qn1n2alphatilde2} and \eqref{eq:qn1n2alphatilde3}. 

When $\widetilde{\alpha}<0$, using \eqref{eq:1F1ws1} in \eqref{eq:asgn1alphanegcases} gives    
\begin{linenomath*}
\begin{equation}
(r_1,r_2,v_1) \to 
\begin{cases}
(r_1-1,r_2+1,v_1) & \text{at rate} \quad r_1 \frac{r_2{\left(1+\lvert\widetilde{\alpha}\rvert\right)}}{2} \frac{\theta\pi_1}{\theta\pi_1+r_1+v_1-1} \,+\, O{\left(1\right)} \\[6pt]
(r_1-1,r_2,v_1) & \text{at rate} \quad r_1 \frac{r_2{\left(1+\lvert\widetilde{\alpha}\rvert\right)}}{2} \frac{r_1-1}{\theta\pi_1+r_1+v_1-1} \,+\, O{\left(1\right)} \\[6pt]
(r_1,r_2,v_1+1) & \text{at rate} \quad \frac{\lvert\widetilde{\alpha}\rvert(\theta\pi_1+r_1+v_1)}{2{\left(1+\lvert\widetilde{\alpha}\rvert\right)}} \,+\, O{\left(r_2^{-1}\right)} \\[6pt]
(r_1,r_2,v_1-1) & \text{at rate} \quad v_1 \frac{r_2{\left(1+\lvert\widetilde{\alpha}\rvert\right)}}{2} \frac{\theta\pi_1+2r_1+v_1-1}{\theta\pi_1+r_1+v_1-1} \,+\, O{\left(1\right)} \\[6pt]
(r_1,r_2-1,v_1) & \text{at rate} \quad \frac{r_2^2}{2} \,+\, O{\left(r_2\right)} 
\label{eq:asgn1alphatildecase1} 
\end{cases}
\end{equation}
\end{linenomath*}
which is comparable to \eqref{eq:asgn1largealphanegcases} and \eqref{eq:asgn1largen2alphanegcases}.  Again we may effectively ignore virtual lineages.  The rates of latent mutation and coalescence in \eqref{eq:asgn1largealphanegcases} and \eqref{eq:asgn1largen2alphanegcases} differ only by the interchange of $r_2$ for $\lvert\alpha\rvert$.   In \eqref{eq:asgn1alphatildecase1}, the factor $r_2{\left(1+\lvert\widetilde{\alpha}\rvert\right)}$ encompasses the effects of both.  The larger $\lvert\widetilde{\alpha}\rvert$ is, the more quickly these events will occur, and again \eqref{eq:pcoalj} and \eqref{eq:K1sum} describe the number of latent mutations.

When $0<\widetilde{\alpha}<1$, using \eqref{eq:1F1ws1} in \eqref{eq:asgn1alphaposcases} gives    
\begin{linenomath*}
\begin{equation}
(r_1,r_2,v_2) \to 
\begin{cases}
(r_1-1,r_2+1,v_2) & \text{at rate} \quad r_1 \frac{r_2{\left(1-\widetilde{\alpha}\right)}}{2} \frac{\theta\pi_1}{\theta\pi_1+r_1-1} \,+\, O{\left(1\right)} \\[6pt]
(r_1-1,r_2,v_2) & \text{at rate} \quad r_1 \frac{r_2{\left(1-\widetilde{\alpha}\right)}}{2} \frac{r_1-1}{\theta\pi_1+r_1-1} \,+\, O{\left(1\right)} \\[6pt]
(r_1,r_2,v_2+1) & \text{at rate} \quad \frac{r_2 \widetilde{\alpha}}{2} \,+\, O{\left(1\right)} \\[6pt]
(r_1,r_2,v_2-1) & \text{at rate} \quad v_2 r_2 \,+\, O{\left(1\right)} \\[6pt]
(r_1,r_2-1,v_2) & \text{at rate} \quad \frac{r_2^2}{2} \,+\, O{\left(r_2\right)}  
\label{eq:asgn1alphatildecase2} 
\end{cases}
\end{equation}
\end{linenomath*}
which is comparable to \eqref{eq:asgn1largen2alphaposcases}.  In contrast to \eqref{eq:asgn1alphatildecase1}, now with $A_1$ favored, the larger $\widetilde{\alpha}$ is (i.e.\ the closer it is to $1$) the smaller the rates of latent mutation and coalescence become.  Otherwise, for any given $\widetilde{\alpha}$, the same conclusions regarding latent mutations and their timing follow from \eqref{eq:asgn1alphatildecase2} as from \eqref{eq:asgn1alphatildecase1}, and these conform to what is stated in Theorem~\ref{T:JointConvergence}.

When $\widetilde{\alpha}>1$, using \eqref{eq:1F1ws3} in \eqref{eq:asgn1alphaposcases} gives    
\begin{linenomath*}
\begin{equation}
(r_1,r_2,v_2) \to 
\begin{cases}
(r_1-1,r_2+1,v_2) & \text{at rate} \quad r_1 \frac{\theta\pi_1}{2} \frac{1}{\widetilde{\alpha}-1} \,+\, O{\left(r_2^{-1}\right)} \\[6pt]
(r_1-1,r_2,v_2) & \text{at rate} \quad \binom{r_1}{2} \frac{\widetilde{\alpha}}{\widetilde{\alpha}-1} \,+\, O{\left(r_2^{-1}\right)} \\[6pt]
(r_1,r_2,v_2+1) & \text{at rate} \quad \frac{r_2}{2} \,+\, O{\left(1\right)} \\[6pt]
(r_1,r_2,v_2-1) & \text{at rate} \quad v_2 r_2  \widetilde{\alpha} \,+\, O{\left(1\right)} \\[6pt]
(r_1,r_2-1,v_2) & \text{at rate} \quad \frac{r_2^2}{2} \widetilde{\alpha} \,+\, O{\left(r_2\right)}  
\label{eq:asgn1alphatildecase3} 
\end{cases}
\end{equation}
\end{linenomath*}
which paints a very different picture.  Whereas \eqref{eq:asgn1largealphanegcases}, \eqref{eq:asgn1largen2alphanegcases}, \eqref{eq:asgn1largen2alphaposcases}, \eqref{eq:asgn1alphatildecase1} and \eqref{eq:asgn1alphatildecase2} all give the Ewens sampling result described by \eqref{eq:pcoalj} and \eqref{eq:K1sum} and have these events occurring quickly on the coalescent time scale, \eqref{eq:asgn1alphatildecase3} is rather like \eqref{eq:asgn1largealphaposcases} in that the rates of latent mutation and coalescence are too slow to register on the time scale of events involving the non-focal allele $A_2$.  The overwhelmingly most frequent events in \eqref{eq:asgn1alphatildecase3} will be coalescent events between $A_2$ lineages at rate $\propto r_2^2$, so an effectively instantaneous transition will occur from $r_2$ large to $r_2$ comparable to $r_1$.  Then this case \eqref{eq:asgn1alphatildecase3} will collapse quickly to the corresponding case \eqref{eq:asgn1largealphaposcases} where coalescence without mutation will happen among the $A_1$ followed by a long wait for a single latent mutation. For the model in Section~\ref{sec:(iii)b}, this is described by Theorem~\ref{T:tildealpha>1} and Theorem~\ref{T:JointConvergence2}.  Finally we may note that initially the rates of latent mutation and coalescence in \eqref{eq:asgn1alphatildecase3} are precisely those predicted for the model in Section ~\ref{sec:(iii)b} from \eqref{E:Evo_type1} starting at $p_0 \to 1- 1/\widetilde{\alpha}$ as specified for $\widetilde{\alpha}\in(1,\infty)$ in \eqref{Dirac_initial}.  

\section{Discussion}

In this paper, we have considered a two allele model at a single genetic locus subject to recurrent mutation and selection in a large haploid population with possibly time-varying size.  We assumed that a sample of size $n$ was drawn uniformly from an infinite population under the diffusion approximation.  By extending the framework of \citet{BartonEtAl2004}, we described the asymptotic behaviors of the conditional genealogy and the number of latent mutations of the sample, given the sample frequencies of the two alleles.  This moves beyond what is in \citet{WakeleyEtAl2023} by the inclusion of selection and by the use of an entirely different model, i.e.\ coalescence in a random background \citep{BartonEtAl2004}.  This yields novel results. For example, in the strong selection case in which the selection strength $\alpha$ is proportional to the sample size $n$ and both go to infinity (our scenario (iii)), the genealogy of the rare allele can be described in terms of a Cox-Ingersoll-Ross (CIR) diffusion with an initial Gamma distribution. 

The concept of rare alleles in this paper and in \citet{WakeleyEtAl2023} is the same as the one considered by \citet{JoyceAndTavare1995} and \citet{Joyce1995}.  It focuses on the counts of the alleles in a large sample rather than their relative frequencies in the population.  In scenarios (ii) and (iii) we consider a fixed number $n_1$ of the rare type 1 when the sample size $n$ tends to infinity.  \citet{JoyceAndTavare1995} considered rare alleles in a large sample drawn from the stationary distribution of a $d$-dimensional Wright-Fisher diffusion with selection  and mutation.  They showed that the counts of rare alleles, from different latent mutations in our terminology, have approximately independent Poisson distributions with parameters that do \textit{not} depend on the selection parameters, and that the Ewens sampling formula describes their distribution.  Their model with $d=2$ and genic selection corresponds to our scenario (ii). Our results for very strong selection ($\alpha\to\infty$) in scenario (iii) differ from those of \citet{JoyceAndTavare1995} in that the rare-allele sampling probabilities \eqref{eq:qn1n2alphatilde1}, \eqref{eq:qn1n2alphatilde2} and \eqref{eq:qn1n2alphatilde3} do depend on selection.  Interestingly, the number of latent mutations given $n_1$ still follows the Ewens sampling formula when $\lim_{n\to\infty}\alpha/n \in (-\infty,1)$.  But this is not true when $\lim_{n\to\infty}\alpha/n\in (1,\infty)$, in which case the number of latent mutations is always $k_1 \equiv 1$.

Some of our results for rare alleles have empirical relevance, specifically those for scenario (ii) including their robustness to time-varying population size demonstrated in Section~\ref{S:varying}, and those for scenario (iii) with $\widetilde{\alpha}<0$.  In scenario (ii), as $n$ increases for fixed but arbitrary $\alpha$, the distributions of latent mutations and the ages of those latent mutations become identical to those for neutral alleles described in \citet{WakeleyEtAl2023}.  Our results also show that selection does have an effect in this case, but it is only to raise or lower the rare-allele sampling probability \eqref{eq:qn1n2largen2} by the constant factor $C$ for every value of $n_1$.  This relative insensitivity to selection suggests confidence in using rare alleles for demographic inference and genome-wide association studies \citep{OConnorEtAl2015,NaitSaadaEtAl2020,ZaidiAndMathieson2020}.  \citet{SlatkinAndRannala1997b}, who obtained the Ewens sampling formula result for rare deleterious alleles by assuming they evolve independently according to a linear birth-death process, cf.\ \citet{SlatkinAndRannala1997a}, suggested that deviations from this neutral prediction at two human-disease-associated loci were due to population growth.  \cite{ReichAndLander2001} made a similar argument for a number of other disease-associated loci starting from the mutation-selection balance model of \citet{HartlAndCampbell1982} and \citet{Sawyer1983} which also gives the Ewens sampling formula result for rare disease alleles.   

Our exploration of time-varying populations in Section~\ref{S:varying}, namely the robustness of the Ewens sampling formula result for the number of latent mutations, suggests that rare alleles may not always be well suited for demographic inference.  With only a mild constraint on the trajectory of population sizes through time, increasing the sample size will eventually make the distribution of latent mutations of rare alleles look as if the population size has been constant at its current size.  There is no doubt that demographic inferences improve as sample sizes increase.  What Section~\ref{S:varying} implies is that these improvements will not come from focusing exclusively on the lower end of sample allele frequencies (i.e.\ any fixed $n_1$ as $n\to\infty$).  How relevant this is for a given sample will depend on the actual ages of its latent mutations and the degree of population-size change between those times and the present.  To illustrate, consider the $O(1/n)$ ages of latent mutations under the exponential growth model with rate $\beta$.  If $\beta/n \ll 1$, the ancestral process of tracing back to these mutations will be complete before the population has changed much in size and the results of Section~\ref{S:varying} will hold.  But this is clearly not the case for the \textit{gnomAD} data in \citet{WakeleyEtAl2023} and \citet{SeplyarskiyEtAl2023}.  The distribution of $n_1$ in the non-Finnish European sample with $n=114K$ is well fit by $\beta/n = 3$.  See for example Fig.~3 in \citet{SeplyarskiyEtAl2023}.  Sample sizes would need to be orders of magnitude greater for the results in Section~\ref{S:varying} to hold in this case. 

Scenario (iii) with $\widetilde{\alpha}<0$ is applicable to strongly deleterious alleles.  An appreciable fraction of new mutations are strongly deleterious \citep{EyreWalkerAndKeightley2007,KimEtAl2017,WeghornEtAl2019,DuklerEtAl2022}.  Previous theoretical work includes \citet{Nei1968}, who found a gamma density analogous to ours in Lemma~\ref{L:Asymp_Initial} but for the population allele frequency of partially recessive lethal mutations, and \citet{CharlesworthAndHill2019}, who used Nei's approximation to derive the negative binomial distribution for $n_1$, our \eqref{eq:qn1n2alphatilde1}.  In this case, \eqref{eq:qn1n2alphatilde1ratio} shows that the sampling probabilities of rare alleles fall off quickly as $n_1$ grows: each additional copy of $A_1$ in the sample lowers its probability by a factor of $1/(1+\lvert\widetilde{\alpha}\rvert)$ compared to the neutral case.  Even so, the distribution of $k_1$ given $n_1$ follows the Ewens sampling formula.  \citet{HartlAndCampbell1982} and \citet{Sawyer1983} obtained similar results by assuming that both selection and mutation are strong.  Our analysis of scenario (iii) with $\widetilde{\alpha}<0$ also shows that latent mutations of rare strongly deleterious alleles are especially young: selection speeds up the ancestral process of latent mutation by a factor of $1+\lvert\widetilde{\alpha}\rvert$ on top of the factor of $n$ already present under neutrality.   This is most easily seen by comparing the first two lines of \eqref{eq:asgn1alphatildecase1} to the first two lines of \eqref{eq:asgn1largen2alphanegcases}.  

Our results for scenario (i) with $\alpha<0$, which hold as $\alpha\to-\infty$ for arbitrary sample size and alleles at any sample frequencies, are also applicable to strongly deleterious alleles.  They are similar to the results just discussed for scenario (iii) with $\widetilde{\alpha}<0$.  We expect that our results for very strong positive selection, i.e.\ scenario (i) with $\alpha>0$ and scenario (iii) with $\widetilde{\alpha}>0$, will be of limited applicability.  Mutations to strongly positively selected alleles are uncommon and observing such an allele a small number of times in a very large sample would be exceedingly unlikely. 

Many open questions remain.
\citet{Joyce1995} obtained a result similar to that of \citet{JoyceAndTavare1995}, for a Wright-Fisher diffusion with selection and infinite-alleles mutation.  This diffusion process is a particular case of the Fleming-Viot process (see \citet{ethier1993fleming} for a review) and it has a unique stationary distribution denoted $\nu_{\rm selec}$.  \citet{Joyce1995} considered a large sample of size $n$ drawn from $\nu_{\rm selec}$.  Let $C_b(n)\in\Z_+^b$ be the first $b$ allele counts in a sample of size $n$ drawn from the stationary distribution, and $K_n$ be the total number of alleles in the sample.  \citet{Joyce1995} showed that for any fixed $b$, the distribution of $(C_b(n),\,K_n)$ under $\nu_{\rm selec}$ is arbitrarily close to that under the neutral model.  It would be interesting to know if analogous results for our scenario (iii) also hold for the infinite allele model.  In particular, is there a threshold for the selection strength relative to $n$ that controls whether selection is washed out or not in the limit as $n\to\infty$?

For time-varying populations, 
little is known in scenario (iii).
For example, will the assumptions in 
Proposition \ref{prop:Convphi_vary} hold for a general demographic function? 
Will there be a phase transition for the value of $y_*$ in terms of $\widetilde{\alpha}$ and if so, what will determine the phase transition?
Also, both our results and those of \citet{JoyceAndTavare1995} and \citet{Joyce1995} are for the infinite-population diffusion limit.  Further consideration of the issues raised in Section~\ref{sec:relate} is needed to assess the relevance of these results to various kinds of finite populations. 

The critical case $\widetilde{\alpha}=1$ in scenario (iii) is omitted in this paper. Results for this case are expected to lie between those of $\widetilde{\alpha}>1$ and $\widetilde{\alpha}<1$, and require more in-depth asymptotic analysis. For example, one can first obtain asymptotic results for the hypergeometric function in \eqref{eq:1F1ws1}-\eqref{eq:1F1ws3} for the case $\widetilde{\alpha}=1$, and then follow the argument in Lemma \ref{L:Asymp_Initial} to obtain the asymptotic of the expectation $\E_{\bf n}[p_0]$ as $n_2\to\infty$ in this critical case.
Lemma \ref{L:Asymp_Initial} 
asserts that $\E_{\bf n}[p_0]=O(1)$ when $\widetilde{\alpha}>1$ and $\E_{\bf n}[p_0]=O(1/n_2)$ when $\widetilde{\alpha}<1$. We conjecture that $\E_{\bf n}[p_0]=O(n_2^{-\sigma})$ for some $\sigma\in (0,1)$ in the critical case. 

Finally, we have ignored the possibility of spatial structure.  Spatially heterogeneous populations in which reproduction rates, death rates, mutation rates and selection strength can depend both on spatial position and local population density present challenges.  
This is because the population dynamics now take place in high or infinite dimension \citep{hallatschek2008gene,Barton2010, MR3582808, louvet2023measurevalued, etheridge2023looking}.  For example, the spatial version of \eqref{eq:sde}, the stochastic Fisher-Kolmogorov-Petrovsky-Piscunov (FKPP) equation introduced by \citet{shiga1988stepping}, is a stochastic partial differential equation that arises as the scaling limit of various discrete models under weak selection \citep{mueller1995stochastic, MR3582808, MR4278798}.  Under the stochastic FKPP, \citet{hallatschek2008gene} and \citet{MR3582808} studied the backward-time lineage dynamics of a single sample individual, conditioned on knowing its type.  It would be interesting to see if our results in this paper can be extended to spatial stochastic models with selection.

\section*{Acknowledgements}
We thank Alison Etheridge for raising the question about the applicability of our limiting results to Wright-Fisher reproduction (cf.\ Section~\ref{sec:relate}).  We also thank Shamil Sunyaev, Evan Koch and Joshua Schraiber for helpful discussions, and Daniel Rickert and Kejia Geng for assistance in producing the figures.  Finally, we thank two anonymous reviewers for their insightful comments.  This research was partially supported by National Science Foundation grants DMS-1855417 and DMS-2152103, and Office of Naval Research grant N00014-20-1-2411 to Wai-Tong (Louis) Fan.

%% The Appendices part is started with the command \appendix;
%% appendix sections are then done as normal sections
%% \appendix

%% \section{}
%% \label{}

\appendix
%\section{Appendix} \label{sec:appendix}
%\setcounter{equation}{0}
%\renewcommand{\theequation}{A\arabic{equation}}

\section{Asymptotic approximations used in the text} \label{sec:asymptotics}

From the series expansion for a ratio of gamma functions with a common large parameter, 6.1.47 in \citet{AbramowitzAndStegun1964} or equation (1) in \citet{TricomiAndErdelyi1951}, we have     
\begin{linenomath*}
\begin{equation} 
\frac{\Gamma(a+n_2)}{\Gamma(b+n_2)} = n_2^{a-b} \left( 1 + \frac{(a-b)(a+b-1)}{2n_2} + O\left(n_2^{-2}\right) \right) \label{eq:gammaratio}
\end{equation}
\end{linenomath*}
for constants $a$ and $b$ which will depend on the application.  For example, we can apply \eqref{eq:gammaratio} twice in the sampling probability \eqref{eq:qn1} when $n_2$ is large: once with $a=n_1+1$ and $b=1$ (in the binomial coefficient) and once with $a=\theta_2$ and $b=\theta_1+\theta_2+n_1$.

The confluent hypergeometric function is commonly defined in terms of the integral 
\begin{linenomath*}
\begin{equation} 
_{1}F_{1}\left(a;b;z\right) = \frac{\Gamma(b)}{\Gamma(a)\Gamma(b-a)} \int_{0}^{1} e^{zu} u^{a-1} (1-u)^{b-a-1} du \label{eq:1F1integral}
\end{equation}
\end{linenomath*}
or in terms of the series
\begin{linenomath*}
\begin{equation} 
_{1}F_{1}\left(a;b;z\right) = \sum_{k=0}^{\infty} \frac{a^{(k)}z^k}{b^{(k)}k!} \label{eq:1F1series}
\end{equation}
\end{linenomath*}
which converges for all $z \in \mathbb{R}$ and $b>a>0$, where $a^{(k)}$ is the rising factorial $a(a+1)\cdots(a+k-1)$ with $a^{(0)}=1$.  Again $a$ and $b$ depend on the context, e.g.\ as in \eqref{eq:C} and \eqref{eq:qn1}.  The parameter $z$ corresponds to the selection parameter $\alpha$.

For large $\lvert \alpha \rvert$ and with constant $a$ and $b$, 
\begin{linenomath*}
\begin{subnumcases}{_{1}F_{1}\left(a;b;\alpha\right) = }
\frac{\Gamma(b)}{\Gamma(b-a)} {\lvert\alpha\rvert}^{-a} \left( 1 - \frac{a(b-a-1)}{\lvert\alpha\rvert} + O\left({\lvert\alpha\rvert}^{-2}\right) \right) & if $\,\alpha<0$ \label{eq:1F1s1} \\[6pt]
\frac{\Gamma(b)}{\Gamma(a)} e^\alpha \alpha^{a-b} \left( 1 - \frac{(b-a)(a-1)}{\alpha} + O\left(\alpha^{-2}\right) \right) & if $\,\alpha>0$ \label{eq:1F1s2}
\end{subnumcases} 
\end{linenomath*}
where the middle, neutral case is given only for completeness.  Equation \eqref{eq:1F1s1} is from (4.1.2) in \citet{Slater1960}, and \eqref{eq:1F1s2} is from (4.1.6) in \citet{Slater1960} or may be obtained from \eqref{eq:1F1s1} using Kummer's first theorem which appears as (1.4.1) in \citet{Slater1960}.

For large $n_2$, with constants $a$, $b$ and $z$, 
\begin{linenomath*}
\begin{equation} 
_{1}F_{1}\left(a;b+n_2;z\right) = 1 + \frac{a z}{n_2} + O\left(n_2^{-2}\right) \label{eq:1F1approxw}
\end{equation}
\end{linenomath*}
directly from \eqref{eq:1F1series}.

For large $n_2$ and $\alpha=\widetilde{\alpha}n_2$, with constants $a$ and $b$, 
\begin{linenomath*}
\begin{subnumcases}{_{1}F_{1}\left(a;b+n_2;\widetilde{\alpha}n_2\right) \approx }
\left(1-\widetilde{\alpha}\right)^{-a} & if $\,\widetilde{\alpha}<1$ \quad \label{eq:1F1ws1} \\[6pt]
\frac{\sqrt{2\pi}}{\Gamma(a)} \left(1-\frac{1}{\widetilde{\alpha}}\right)^{a-1} \left(\frac{1}{\widetilde{\alpha}}\right)^{b-a+n_2} n_2^{a-\frac{1}{2}} e^{n_2\left(\widetilde{\alpha}-1\right)}  & if $\,\widetilde{\alpha}>1$ \quad \label{eq:1F1ws3}
\end{subnumcases} 
\end{linenomath*}
which we present only to leading order for simplicity.  Equation \eqref{eq:1F1ws1} follows from \eqref{eq:1F1series} and \eqref{eq:1F1ws3} was obtained by applying Laplace's method to the integral in \eqref{eq:1F1integral} for this case.

\section{Proofs of Lemma \ref{L:Convphi}, Lemma \ref{L:Asymp_Initial} and Lemma 
 \ref{L:MoranVary}} \label{sec:lemmaproofs}

\begin{proof}[Proof of Lemma \ref{L:Convphi}]
Part (ii) then follows from part (iii) with $\widetilde{\alpha}=0$, and  the proof of part (i) follows from a similar argument.

To prove part (iii), we let $a=\theta_1+n_1$ for simplicity.
The function $\phi^{(n_1,n_2)}_{\alpha}$ is a constant multiple of  the function 
\begin{linenomath*}
\begin{align*}
 x^{a - 1} (1-x)^{n_2+\theta_2 - 1} e^{\alpha x} =&\,
\left[(1-x)e^{\widetilde{\alpha} x}\right]^{n_2}\,
x^{a-1}(1-x)^{\theta_2-1} e^{cx}\\
=&\,
e^{n_2\,S(x)}\,
x^{a-1}(1-x)^{\theta_2-1} e^{cx},
\end{align*}
\end{linenomath*}
where the function $S:\,[0,1)\to\R$ defined by $S(x)\coleq\widetilde{\alpha} x +\ln(1-x)$
\begin{linenomath*}
\begin{align}\label{S_asymp}
\begin{cases}
\textrm{is strictly decreasing } & \textrm{when } \quad  \widetilde{\alpha}\in (-\infty,1] \\[6pt]
\textrm{has a global maximum at }x=1-1/\widetilde{\alpha} \in(0,1) & \textrm{when } \quad  \widetilde{\alpha}\in (1,\infty) 
\end{cases}
\end{align}
\end{linenomath*}
Part (iii) then follows from  asymptotic expansion of integrals such as the Laplace method. 

Let $x^*\in[0,1]$ be the global maximum of the function $S$. Then $x^*=0$ when  $\widetilde{\alpha}\in (-\infty,1]$ and $x^*=1-1/\widetilde{\alpha}$ when $\widetilde{\alpha}\in (1,\infty)$. Fix an arbitrary $\epsilon\in(0,1)$. There exists $\delta\in(0,1)$ small enough such that $\sup_{y\in[0,1]:\,|y-x^*|<\delta}|f(y)-f(x^*)|<\epsilon$. For each of the two cases, by \eqref{S_asymp}, the ratio
\begin{linenomath*}
\begin{align}\label{ratioto0}
\int_{x\in[0,1]:\,|x-x^*|>\delta} e^{n_2\,S(x)}\,
x^{a-1}(1-x)^{\theta_2-1} e^{cx}\,dx \Big/
\int_{0}^1 e^{n_2\,S(x)}\,
x^{a-1}(1-x)^{\theta_2-1} e^{cx}\,dx \to 0 
\end{align}
\end{linenomath*}
as $n_2\to \infty$.
For any $f\in C_b([0,1])$, 
\begin{linenomath*}
\begin{align*}
&\Big|\int_0^1 f(x)\phi^{(n_1,n_2)}_{\alpha}(x)\,dx - f(x^*)\Big| \\
\leq & \Big|\int_{x\in[0,1]:\,|x-x^*|>\delta} f(x)\phi^{(n_1,n_2)}_{\alpha}(x)\,dx \Big| +  \Big|\int_{x\in[0,1]:\,|x-x^*|\leq \delta} f(x)\phi^{(n_1,n_2)}_{\alpha}(x)\,dx - f(x^*)\Big| \\
\leq& \|f\|\,\int_{x\in[0,1]:\,|x-x^*|>\delta} \phi^{(n_1,n_2)}_{\alpha}(x)\,dx + \epsilon +  |f(x^*)|\,\int_{x\in[0,1]:\,|x-x^*|>\delta}\phi^{(n_1,n_2)}_{\alpha}(x)\,dx.
\end{align*}
\end{linenomath*}
Hence by \eqref{ratioto0},
$\limsup_{n_2\to\infty}\Big|\int_0^1 f(x)\phi^{(n_1,n_2)}_{\alpha}(x)\,dx - f(x^*)\Big| \leq \epsilon$.
Since $\epsilon>0$ is arbitrary, we have shown that $\Big|\int_0^1 f(x)\phi^{(n_1,n_2)}_{\alpha}(x)\,dx - f(x^*)\Big|\to 0$ as $n_2\to\infty$. 
\end{proof}

\begin{proof}[Proof of Lemma \ref{L:Asymp_Initial}]
Convergence in distribution to a constant is equivalent to convergence in probability. Hence  Lemma \ref{L:Asymp_Initial}, except the last statement about the convergence in distribution of $n_2p_0$, follows from Lemma \ref{L:Convphi}. As in the main text, $A\approx B$ below means $A/B\to 1$ in the specified limit.

When $\widetilde{\alpha}\in  (-\infty,1)$, we  let $a=n_1+\theta_1$ for simplicity. The probability density function of $np_0$ under $\P_{\bf n}$ is
\begin{linenomath*}
\begin{align*}
\frac{1}{n}\phi^{(n_1,n_2)}_{\alpha}\left(\frac{y}{n}\right)= &\,
\frac{1}{n}\,\frac{1}{{\rm Beta}(a,n_2+\theta_2) {}_1F_1\left(a;n+\theta_1+\theta_2;\alpha\right)}\,\left(\frac{y}{n}\right)^{a - 1} \left(1-\left(\frac{y}{n}\right)\right)^{n_2+\theta_2 - 1} e^{\alpha \frac{y}{n}}\\
\approx &\,
\frac{1}{n}\,\frac{1}{{\rm Beta}(a,n_2+\theta_2) {}_1F_1\left(a;n+\theta_1+\theta_2;\,\widetilde{\alpha}n_2\right)}\,\left(\frac{y}{n}\right)^{a - 1} \,e^{-y}\,e^{\widetilde{\alpha} y}\\
\approx &\,
\frac{1}{n^a}\,\frac{1}{{\rm Beta}(a,n_2+\theta_2)\; (1-\widetilde{\alpha})^{-a}}\,y^{a - 1} \,e^{-y}\,e^{\widetilde{\alpha} y}\\
\approx &\,
\frac{1}{\Gamma(a)\; (1-\widetilde{\alpha})^{-a}}\,y^{a - 1} \,e^{-y}\,e^{\widetilde{\alpha} y}
\end{align*}
\end{linenomath*}
as $n_2\to\infty$,
where we used \eqref{eq:1F1ws1} and then \eqref{eq:gammaratio} in the last two approximations above.
Hence the probability density function of $n_2p_0$ (under $\P_{\bf n}$) converges pointwise to that of the ${\rm Gam}(n_1+\theta_1,1-\widetilde{\alpha})$ random variable. This implies the desired convergence in distribution.
\end{proof}

\begin{proof}[Proof of Lemma \ref{L:MoranVary}]
Fix $t\in\R_+$ and let $k=[N(N-1)t/2]$. Suppose $A(k)$ is the number of type 1 at step $k$ of the discrete-time Moran process. Direct calculations from \eqref{eq:pjplusone} and \eqref{eq:pjminusone} show that, as $N\to\infty$,
\[
\E\left[ \frac{A(k+1)-A(k)}{\rho(t)N} \,\Big|\,\frac{A(k)}{\rho(t)N}=x \right] \approx b(t,x)\frac{2}{N^2}
\]
and 
\[
\E\left[ \left(\frac{A(k+1)-A(k)}{\rho(t)N}\right)^2 \,\Big|\,\frac{A(k)}{\rho(t)N}=x \right] \approx \sigma^2(t,x)\frac{2}{N^2},
\]
where $b(t,x)\coleq \frac{\theta_1}{2 \rho(t)} (1-x) - \frac{\theta_2}{2 \rho(t)}x + \frac{\alpha}{2 \rho(t)} x (1-x)$  and $\sigma(t,x)=\sqrt{\frac{x(1-x)}{\rho^2(t)}}$ are the coefficients in \eqref{eq:sde_vary}.
The condition on $\rho$ guarantees that the SDE  \eqref{eq:sde_vary} has a unique weak solution and that the desired weak convergence follows from standard (martingale problem) method; for reference see \citet[Chapter 11]{stroock1979multidimensional}.
\end{proof}

\section{Events in the conditional ancestral selection graph} \label{sec:asgdetails}

Here we show how the minimal conditional ancestral process in Section~\ref{sec:condasg} is obtained from the full conditional ancestral process.  To begin, we assume that at some time in the conditional ancestral process there were $r_1$, $r_2$, $v_1$ and $v_2$ real and virtual lineages of type $1$ and type $2$.  The associated sampling probability is $q_o(r_1,r_2,v_1,v_2)$, the straightforward extension of \eqref{eq:qr1r2v1} or \eqref{eq:qr1r2v2} to include both type-$1$ and type-$2$ virtual lineages.  How branching events are resolved depends on which allele is favored by selection.  We begin here by assuming that $A_2$ is favored, or $\alpha<0$.  Grouping events by the types of lineages involved (real or virtual of type $1$ or type $2$) then by whether it is mutation, branching or coalescence gives fourteen possibilities which occur at the following rates.    
\begin{linenomath*}
\begin{align}
& r_1 \frac{\theta\pi_1}{2} {\left( \frac{q_o(r_1,r_2,v_1,v_2)}{q_o(r_1,r_2,v_1,v_2)} + \frac{q_o(r_1-1,r_2+1,v_1,v_2)}{q_o(r_1,r_2,v_1,v_2)} \right)} \label{eq:asgcases1} \\[6pt]
& r_2 \frac{\theta\pi_2}{2} {\left( \frac{q_o(r_1,r_2,v_1,v_2)}{q_o(r_1,r_2,v_1,v_2)} + \frac{q_o(r_1+1,r_2-1,v_1,v_2)}{q_o(r_1,r_2,v_1,v_2)} \right)} \label{eq:asgcases2} \\[6pt]
& v_1 \frac{\theta\pi_1}{2} {\left( \frac{q_o(r_1,r_2,v_1,v_2)}{q_o(r_1,r_2,v_1,v_2)} + \frac{q_o(r_1,r_2,v_1-1,v_2+1)}{q_o(r_1,r_2,v_1,v_2)} \right)} \label{eq:asgcases3} \\[6pt]
& v_2 \frac{\theta\pi_2}{2} {\left( \frac{q_o(r_1,r_2,v_1,v_2)}{q_o(r_1,r_2,v_1,v_2)} + \frac{q_o(r_1,r_2,v_1+1,v_2-1)}{q_o(r_1,r_2,v_1,v_2)} \right)} \label{eq:asgcases4} \\[6pt]
& r_1 \frac{\lvert\alpha\rvert}{2} \frac{q_o(r_1,r_2,v_1+1,v_2)}{q_o(r_1,r_2,v_1,v_2)} \label{eq:asgcases5} \\[6pt]
& r_2 \frac{\lvert\alpha\rvert}{2} {\left( 2 \frac{q_o(r_1,r_2,v_1+1,v_2)}{q_o(r_1,r_2,v_1,v_2)} + \frac{q_o(r_1,r_2,v_1,v_2+1)}{q_o(r_1,r_2,v_1,v_2)} \right)} \label{eq:asgcases6} \\[6pt]
& v_1 \frac{\lvert\alpha\rvert}{2} \frac{q_o(r_1,r_2,v_1+1,v_2)}{q_o(r_1,r_2,v_1,v_2)} \label{eq:asgcases7} \\[6pt]
& v_2 \frac{\lvert\alpha\rvert}{2} {\left( 2 \frac{q_o(r_1,r_2,v_1+1,v_2)}{q_o(r_1,r_2,v_1,v_2)} + \frac{q_o(r_1,r_2,v_1,v_2+1)}{q_o(r_1,r_2,v_1,v_2)} \right)} \label{eq:asgcases8} \\[6pt]
& \binom{r_1}{2} \frac{q_o(r_1-1,r_2,v_1,v_2)}{q_o(r_1,r_2,v_1,v_2)} \label{eq:asgcases9} \\[6pt]
& \binom{r_2}{2} \frac{q_o(r_1,r_2-1,v_1,v_2)}{q_o(r_1,r_2,v_1,v_2)} \label{eq:asgcases10} \\[6pt]
& r_1 v_1 \frac{q_o(r_1,r_2,v_1-1,v_2)}{q_o(r_1,r_2,v_1,v_2)} \label{eq:asgcases11} \\[6pt]
& r_2 v_2 \frac{q_o(r_1,r_2,v_1,v_2-1)}{q_o(r_1,r_2,v_1,v_2)} \label{eq:asgcases12} \\[6pt]
& \binom{v_1}{2} \frac{q_o(r_1,r_2,v_1-1,v_2)}{q_o(r_1,r_2,v_1,v_2)} \label{eq:asgcases13} \\[6pt]
& \binom{v_2}{2} \frac{q_o(r_1,r_2,v_1,v_2-1)}{q_o(r_1,r_2,v_1,v_2)} \label{eq:asgcases14} 
\end{align}
\end{linenomath*}
The sum of \eqref{eq:asgcases1} through \eqref{eq:asgcases14} is equal to the total rate of events in the unconditional ancestral process, $(r_1+r_2+v_1+v_2)(\theta+\lvert\alpha\rvert+r_1+r_2+v_1+v_2-1)/2$.  Twenty-two distinct events $(r_1,r_2,v_1,v_2)\to(r_1^\prime,r_2^\prime,v_1^\prime,v_2^\prime)$ are represented, one for each of the ratios of sampling probabilities, $q_0(r_1^\prime,r_2^\prime,v_1^\prime,v_2^\prime)/q_0(r_1,r_2,v_1,v_2)$.  Note that the assumption of parent-independent mutation leads to the four kinds of spurious or empty mutation events in \eqref{eq:asgcases1} through \eqref{eq:asgcases4} which do not change the ancestral state of the sample $(r_1^\prime=r_1,r_2^\prime=r_2,v_1^\prime=v_1,v_2^\prime=v_2)$.  Also, only those events which have have non-zero probabilities of giving the data appear in \eqref{eq:asgcases1} through \eqref{eq:asgcases14}; coalescent events between lineages with different types and type-$i$ mutation events on type $3-i$ lineages would make the data impossible.

Recall that the resolution of branching events depends on which allele is favored by selection.  The events and their probabilities in \eqref{eq:asgcases5} through \eqref{eq:asgcases8} are just for the case $\alpha<0$, where $A_2$ is the favored allele.  Each branching event creates an incoming lineage and a continuing lineage, both of which may be of type $1$ or type $2$.  Let $(I,C)$ be the types of these lineages.  
In \eqref{eq:asgcases5} and \eqref{eq:asgcases7}, only one of the four $(I,C)$ pairs has non-zero probability of producing the data: $(I=1,C=1)$ corresponding to the event $(r_1,r_2,v_1,v_2)\to(r_1,r_2,v_1+1,v_2)$.  In \eqref{eq:asgcases6} and \eqref{eq:asgcases8}, the possibility $(I=1,C=1)$ is discarded as it would then be impossible for the descendant lineage to be of type $2$.  The other three possibilities have non-zero chances of producing the data, and associated events 
\begin{linenomath*}
\begin{subnumcases}{(r_1,r_2,v_1,v_2) \to }
(r_1,r_2,v_1+1,v_2) & when $\,(I=1,C=2)$ \label{eq:I1C2neg} \\[2pt]
(r_1,r_2,v_1+1,v_2) & when $\,(I=2,C=1)$ \label{eq:I2C1neg} \\[2pt]
(r_1,r_2,v_1,v_2+1) & when $\,(I=2,C=2)$ \label{eq:I2C2neg} .
\end{subnumcases} 
\end{linenomath*}

In contrast, if $\alpha>0$ then branching events on type-$2$ lineages are the ones for which only one of the four $(I,C)$ pairs has non-zero probability of producing the data: $(I=2,C=2)$ corresponding to the event $(r_1,r_2,v_1,v_2)\to(r_1,r_2,v_1,v_2+1)$.  When $\alpha>0$, if the branching event occurs on a type-$1$ lineage, then in place of \eqref{eq:I1C2neg}, \eqref{eq:I2C1neg} and \eqref{eq:I2C2neg} we have 
\begin{linenomath*}
\begin{subnumcases}{(r_1,r_2,v_1,v_2) \to }
(r_1,r_2,v_1+1,v_2) & when $\,(I=1,C=1)$ \label{eq:I1C1pos} \\[2pt]
(r_1,r_2,v_1,v_2+1) & when $\,(I=1,C=2)$ \label{eq:I1C2pos} \\[2pt]
(r_1,r_2,v_1,v_2+1) & when $\,(I=2,C=1)$ \label{eq:I2C1pos} .
\end{subnumcases} 
\end{linenomath*}
Therefore, when $\alpha>0$, \eqref{eq:asgcases5} through \eqref{eq:asgcases8} must be replaced with 
\begin{linenomath*}
\begin{align}
& r_1 \frac{\alpha}{2} {\left( \frac{q_o(r_1,r_2,v_1+1,v_2)}{q_o(r_1,r_2,v_1,v_2)} + 2 \frac{q_o(r_1,r_2,v_1,v_2+1)}{q_o(r_1,r_2,v_1,v_2)} \right)} \label{eq:asgcases15} \\[6pt]
& r_2 \frac{\alpha}{2} \frac{q_o(r_1,r_2,v_1,v_2+1)}{q_o(r_1,r_2,v_1,v_2)} \label{eq:asgcases16} \\[6pt]
& v_1 \frac{\alpha}{2} {\left( \frac{q_o(r_1,r_2,v_1+1,v_2)}{q_o(r_1,r_2,v_1,v_2)} + 2 \frac{q_o(r_1,r_2,v_1,v_2+1)}{q_o(r_1,r_2,v_1,v_2)} \right)} \label{eq:asgcases17} \\[6pt]
& v_2 \frac{\alpha}{2} \frac{q_o(r_1,r_2,v_1,v_2+1)}{q_o(r_1,r_2,v_1,v_2)} \label{eq:asgcases18} 
\end{align}
\end{linenomath*}
Equations \eqref{eq:asgcases1} through \eqref{eq:asgcases7} and \eqref{eq:asgcases9} through \eqref{eq:asgcases14} are the same for $\alpha>0$ and $\alpha<0$.  

The simplifications discovered by \citet{Slade2000a} and \citet{Fearnhead2002} follow from the simple fact that each sampled lineage is either of type $1$ or type $2$.  \citet{Slade2000a} noticed that when both the descendant lineage and the incoming lineage have the favored type, the type of the continuing lineage does not matter so there is no need to introduce a new virtual lineage. Instead, these two possibilities can be collapsed into a single null event which does not change the numbers and types of ancestral lineages.  That is, we can use  
\begin{linenomath*}
\begin{equation}
q_o(r_1,r_2,v_1+1,v_2) + q_o(r_1,r_2,v_1,v_2+1) = q_o(r_1,r_2,v_1,v_2)
\end{equation}
\end{linenomath*}
in \eqref{eq:asgcases6}, \eqref{eq:asgcases8}, \eqref{eq:asgcases15} and \eqref{eq:asgcases17}.  As a result, no type-$2$ virtual lineages will be created.  

Along the same lines, \citet{Fearnhead2002} noticed that when mutation is parent-independent there is no need to follow ancestral lineages once they have mutated, because the ancestral lineage could be of either type.  Any such lineage can be removed from the ancestral process.  Here we use 
\begin{linenomath*}
\begin{equation}
q_o(r_1,r_2,v_1,v_2) + q_o(r_1+1,r_2-1,v_1,v_2) = q_o(r_1,r_2-1,v_1,v_2) 
\end{equation}
\end{linenomath*}
in \eqref{eq:asgcases2}, and other appropriate identities in \eqref{eq:asgcases3} and \eqref{eq:asgcases4}.  But we do not make use of this simplification in \eqref{eq:asgcases1} because our specific goal is to model latent mutations in the ancestry of $A_1$.  These are actual mutations, where the ancestral type was $A_2$.  The remaining $A_1 \to A_1$ empty mutations are null events, which do not change the numbers and types of ancestral lineages.   

The conditional ancestral processes for $\alpha<0$ and $\alpha>0$ given by \eqref{eq:asgn1alphanegcases} and \eqref{eq:asgn1alphaposcases} in the main text each include just five kinds of (non-null) events.  We obtain these by applying the simplifications of \citet{Slade2000a} and \citet{Fearnhead2002} then grouping events by their outcomes.  For example, the coalescent events in \eqref{eq:asgcases10} have effect $r_2 \to r_2-1$, as do the combined mutations in \eqref{eq:asgcases2} once the simplification of \citet{Fearnhead2002} is applied.  So these appear together as one kind of event, the fifth case in both \eqref{eq:asgn1alphanegcases} and \eqref{eq:asgn1alphaposcases}.  

We do not include null events in \eqref{eq:asgn1alphanegcases} and \eqref{eq:asgn1alphaposcases} since these by definition have no effect on the ancestral lineages.  In the case $\alpha<0$, the null events are empty mutations on type-$1$ real lineages and branching events on type-$2$ real lineages where the incoming line is also of type $2$.  These occur with total rate $r_1\theta\pi_1/2 + r_2\lvert\alpha\rvert/2$.  In the case $\alpha>0$, the null events are empty mutations on type-$1$ real lineages and branching events on type-$1$ real lineages where the incoming line is also of type $1$.  These occur with total rate $r_1\theta\pi_1/2 + r_1\alpha/2$.  

%% If you have bibdatabase file and want bibtex to generate the
%% bibitems, please use
%%
%%  \bibliographystyle{elsarticle-harv} 
%%  \bibliography{<your bibdatabase>}

%\bibliographystyle{elsarticle-harv} 
%\bibliography{refs}

%% else use the following coding to input the bibitems directly in the
%% TeX file.

%\begin{thebibliography}{00}

%% \bibitem[Author(year)]{label}
%% Text of bibliographic item

%\bibitem[ ()]{}

%\end{thebibliography}

% cut-and-pasted contents of bbl file generated using elsarticle-harv as above

\end{document}